        \numberwithin{equation}{section}
        \theoremstyle{plain}
        \newtheorem{theorem}[equation]{Theorem}
        \newtheorem{corollary}[equation]{Corollary}
        \newtheorem{lemma}[equation]{Lemma}
        \newtheorem{proposition}[equation]{Proposition}
        \newtheorem{maintheorem}{Theorem}
        \theoremstyle{definition}
        \newtheorem{example}[equation]{Example}
        \newtheorem*{example*}{Example}
        \theoremstyle{remark}
        \newtheorem{remark}[equation]{Remark}
        \newtheorem*{remark*}{Remark}
        \newcommand{\suchthat}{\,:\,}
        \newcommand{\itemref}[1]{\eqref{#1}}
        \newcommand{\Z}{\mathbb{Z}}
        \newcommand{\N}{\mathbb{N}}
        \newcommand{\F}{\mathbb{F}} 
        \newcommand{\Q}{\mathbb{Q}}
        \newcommand{\Orb}{\mathcal{O}}   
       \DeclareMathOperator{\spec}{Spec} 
           \newcommand{\et}{\mathrm{\acute{e}t}}
           \newcommand{\lisset}{\mathrm{lis\text{\nobreakdash-}\acute{e}t}}
        \newcommand{\MOD}{\mathsf{Mod}}    
        \DeclareMathOperator{\coker}{coker}
         \DeclareMathOperator{\Hom}{Hom}
        \DeclareMathOperator{\Ext}{Ext}
        \newcommand{\COHO}[1]{\mathcal{H}^{{#1}}}
        \newcommand{\trunc}[1]{\tau^{{#1}}}
        \newcommand{\RDERF}{\mathsf{R}}
        \newcommand{\LDERF}{\mathsf{L}}
        \newcommand{\DCAT}{\mathsf{D}}
        \newcommand{\RHom}{\RDERF\!\Hom}
        \newcommand{\SHom}{\mathcal{H}om}
        \newcommand{\SRHom}{\RDERF\SHom}
        \newcommand{\QCOH}{\mathsf{QCoh}}
        \newcommand{\COH}{\mathsf{Coh}}
        \newcommand{\PERF}{\mathsf{Perf}}
        \renewcommand{\bar}[1]{\overline{{#1}}}
        \newcommand{\ID}[1]{\mathrm{Id}_{#1}}
        \newcommand{\tensor}{\otimes}
        \newcommand{\opp}{\circ}
                \newcommand{\AB}{\mathsf{Ab}}
                \newcommand{\SETS}{\mathsf{Sets}}
\numberwithin{equation}{section}
\newcommand{\qcsubscript}{\mathrm{qc}} 
\newcommand{\DQCOH}[1][]{\DCAT_{\qcsubscript{#1}}} 
\newcommand{\GL}{\mathrm{GL}} 
\newcommand{\Gm}{\mathbb{G}_m} 
\newcommand{\Ga}{\mathbb{G}_a} 
\newcommand{\hocolim}[1]{\underset{#1}{\mathrm{hocolim}}\,}
\newcommand{\holim}[1]{\underset{#1}{\mathrm{holim}}\,}
\newcommand{\spref}[1]{\href{http://stacks.math.columbia.edu/tag/#1}{#1}}
\newcommand{\labitem}[2]{%
\def\@itemlabel{(\textbf{#1})}
\item
\def\@currentlabel{\textbf{#1}}\label{#2}}
\theoremstyle{theorem}
\newtheorem*{corollary*}{Corollary}
\theoremstyle{definition}
\newtheorem*{definition*}{Definition}
\title{Further remarks on derived categories of algebraic stacks} \date{May 19, 2022}
\author[J. Hall]{Jack Hall}
\subjclass[2020]{Primary 14F06, 14F08; secondary 13D09, 14A20, 18G80}
\keywords{Derived categories, algebraic stacks, perfect complexes}
\newcommand{\shfcoho}{\mathsf{H}}
\newcommand{\cd}{\mathrm{cd}}
\newcommand{\closure}[1]{\bar{{#1}}}
\renewcommand{\mathbb}{\mathbf}
\DeclareMathOperator{\amp}{amp}
\setlist[enumerate]{font=\normalfont}
\begin{document}
\begin{abstract}
  Let $X$ be an algebraic stack with quasi-affine diagonal of finite type over 
a field $k$ of characteristic $0$. We extend the well-known equivalence
$\DCAT^+(\QCOH(X)) \simeq \DQCOH^+(X)$ to unbounded derived categories. We also prove
that if $X$ is smooth over $k$, then $\DQCOH(X)$ is compactly generated. We
accomplish the former using the descendable algebras of Mathew. We
also establish related results in positive and mixed characteristics.
\end{abstract}
\maketitle
\section{Introduction}\label{S:intro}
Let $X$ be a quasi-compact and quasi-separated scheme. Then 
\begin{enumerate}[label=(\alph*), ref=(\alph*)]
\item \label{EI:gen} $\DQCOH(X)$, the unbounded derived category of
  $\Orb_X$-modules whose cohomology sheaves belong to the abelian
  category of quasi-coherent sheaves $\QCOH(X)$, is compactly
  generated by perfect complexes; and
\item \label{EI:eq} if $X$ has affine diagonal or is noetherian, then
  the functor $\Psi_X \colon \DCAT(\QCOH(X)) \to \DCAT(X)$ from the
  unbounded derived category of quasi-coherent
  $\Orb_X$-modules to the unbounded derived category of
  $\Orb_X$-modules is fully faithful with image $\DQCOH(X)$.
\end{enumerate}
In this generality, \ref{EI:gen} was established in
\cite{MR1996800}---also see \cite{MR1308405}; and \ref{EI:eq} is
essentially due to \cite{MR1422312} (separated case) and
\cite{MR0222093} (noetherian case)---also see \cite[Tags \spref{08DB}
\& \spref{09T4}]{stacks-project}.

Now let $X$ be a quasi-compact and quasi-separated algebraic stack. In
reasonable situations (e.g., affine diagonal), there is a bounded
below equivalence $\DCAT^+(\QCOH(X)) \simeq \DQCOH^+(X)$
\cite[Thm.~C.1]{hallj_neeman_dary_no_compacts}. In these reasonable
situations, if \ref{EI:gen} holds, then so does \ref{EI:eq}
\cite[Thm.~1.2]{hallj_neeman_dary_no_compacts}. For many stacks in
positive characteristic, however  (e.g., $X=B\Ga$), the functor
$\DCAT(\QCOH(X)) \to \DQCOH(X)$ fails to be full and $\DQCOH(X)$ is not
compactly generated \cite[Thms.~1.1 \& 1.3]{hallj_neeman_dary_no_compacts}. 

Local quotient descriptions of algebraic stacks are a natural way to
establish \ref{EI:gen} (and consequently \ref{EI:eq}) for algebraic
stacks. Such descriptions are not easy to come by. Indeed, they
usually rely on Sumihiro-type results for quotient stacks
\cite{MR0337963,MR3329192} or abstract criteria such as
\cite{AHR_lunafield,etale_local_stacks}; also see
\cite[\S2.2.4]{MR2717173}.

The goal of this article is to establish some useful instances of
\ref{EI:gen} and \ref{EI:eq} that do not rely on the existence of
local quotient descriptions. In the smooth case, for example, we can
prove the following result.
\begin{maintheorem}\label{MT:smooth}
  Let $k$ be a field of characteristic $0$. Let $X$ be an algebraic
  stack that is quasi-compact with quasi-affine diagonal. If $X$ is
  smooth over $\spec k$, then 
  \begin{enumerate}
  \item $\DCAT(\QCOH(X)) \simeq \DQCOH(X)$;
  \item $\DCAT^b(\COH(X)) \simeq \PERF(X) \simeq \DQCOH(X)^c$; and
  \item $\DQCOH(X)$ is compactly generated by perfect complexes.
  \end{enumerate}
\end{maintheorem}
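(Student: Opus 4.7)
The plan is to follow the strategy flagged in the abstract: construct a smooth affine cover of $X$ that is descendable in the sense of Mathew, and transfer each assertion along it. The characteristic-zero hypothesis enters via Cartier's theorem, which makes every affine $k$-group of finite type smooth, and via the resulting finiteness of cohomological dimension of $BG$ for such groups $G$ (bounded by the dimension of the unipotent radical of $G$).

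For (1), the bounded-below equivalence $\DCAT^+(\QCOH(X))\simeq\DQCOH^+(X)$ is supplied by \cite[Thm.~C.1]{hallj_neeman_dary_no_compacts}. To extend it to unbounded categories, I would first establish that $X$ has finite cohomological dimension: choose a smooth, faithfully flat $p\colon U\to X$ with $U$ an affine scheme, for which the quasi-affinity of the diagonal ensures that each $U^{\times_X n}$ is a qcqs scheme; then combine the \v{C}ech description of $\RDERF\Gamma(X,-)$ against $p$ with the uniform stabilizer bound to conclude that $\Gamma(X,-)$ has finite cohomological amplitude on $\QCOH(X)$. A standard way-out argument then upgrades the bounded-below equivalence to the unbounded statement.

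For (3), I would show that the algebra $p_*\Orb_U \in \DQCOH(X)$ for the same cover is descendable in Mathew's sense. Descendability presents $\DQCOH(X)$ as a finite totalization of the cosimplicial diagram $n\mapsto \DQCOH(U^{\times_X n})$, and each of these categories is compactly generated by perfect complexes. Pushforward along $p$, which preserves compact objects thanks to the finite cohomological amplitude from (1), then transports a set of perfect generators to $\DQCOH(X)$. Finally, for (2), smoothness of $X$ over $k$ implies, on a smooth cover, that every coherent sheaf admits a finite locally free resolution and is hence perfect, so $\DCAT^b(\COH(X))\simeq\PERF(X)$; the identification $\PERF(X)\simeq\DQCOH(X)^c$ then follows formally from the compact generation in (3).

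The principal obstacle, and the heart of the argument, is establishing descendability of $p_*\Orb_U$: one must show that the cofibre of $\Orb_X\to p_*\Orb_U$ is nilpotent of bounded index in $\DQCOH(X)$, without recourse to Sumihiro or AHR-style local quotient descriptions. This is where characteristic zero enters essentially, replacing the role played by linear reductivity at closed points in the standard treatments.
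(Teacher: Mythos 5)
Your overall framing (finite cohomological dimension from characteristic $0$, descendable smooth covers) matches the toolkit of the paper, but two of your key steps do not hold as stated, and they are exactly the points where the real work happens.

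First, for (3) you assert that $\RDERF p_*$ ``preserves compact objects thanks to the finite cohomological amplitude'' and hence transports perfect generators from the cover down to $X$. This is false: finite cohomological amplitude of $\RDERF p_*$ does not make it preserve perfect complexes. Already for $p\colon \Aff^1_k \to \spec k$ the pushforward $p_*\Orb_{\Aff^1_k}=k[x]$ is not perfect over $k$; more generally, for a non-finite (quasi-)affine cover $p\colon \spec A \to X$ the objects $K_A^{\otimes i}\otimes^{\LDERF}\RDERF p_*\Orb_U \simeq \RDERF p_*\LDERF p^*K_A^{\otimes i}$ appearing in the descendability filtration \eqref{eq:desc-index} are not compact. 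So descendability places $\Orb_X$ in a thick \emph{tensor ideal} generated by $A$, not in a thick subcategory generated by compacts, and your totalization argument does not produce compact generators. Note also that your argument for (3) never uses smoothness, so if it worked it would prove compact generation for arbitrary quasi-compact stacks with quasi-affine diagonal of finite cohomological dimension, which is not what descendability gives. The paper instead obtains (3) from regularity: local regularity of finite Krull dimension forces $\DCAT^b_{\COH}(X)\simeq\PERF(X)$, finite cohomological dimension gives $\PERF(X)\simeq\DQCOH(X)^c$, and a uniform truncation estimate (Theorem \ref{T:reg-conc}\itemref{TI:reg-conc:fd}, proved by a direct computation with $\SRHom_{\Orb_X}(M,\Orb_X)\in\DCAT^{[0,d]}_{\COH}(X)$) shows that coherent sheaves detect every nonzero cohomology sheaf of an unbounded complex; hence $\COH(X)$ compactly generates.

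Second, for (1) you invoke ``a standard way-out argument'' to pass from $\DCAT^+(\QCOH(X))\simeq\DQCOH^+(X)$ to the unbounded equivalence. There is no such standard argument: unbounded complexes are not recovered from their truncations unless the relevant $t$-structures are left complete, and establishing this (via AB$4^*n$ bounds on derived products, or via compact generation and \cite[Thm.~1.2]{hallj_neeman_dary_no_compacts}) is the central content of the paper, not a formal deduction from finite cohomological dimension. In the paper's direct proof, (1) is deduced \emph{from} (3); in the second proof it comes from Example \ref{E:ab4n-reg} plus Corollary \ref{C:equiv}. Either way, regularity does essential work that your outline omits. Your identification of descendability of $p_*\Orb_U$ as ``the heart of the argument'' is also misplaced for Theorem \ref{MT:smooth}: descendability is genuinely needed for Theorems \ref{MT:split-presentation} and \ref{MT:aff-diag} (to bound derived products via Proposition \ref{P:desc-prods}), but the direct proof of Theorem \ref{MT:smooth} does not use it at all.
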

We have not seen Theorem \ref{MT:smooth} in the literature before,
though suspect it to be folklore to experts (also see Theorem
\ref{T:reg-conc} for refinements).  
We also develop a new method to prove \ref{EI:eq}, that avoids
\ref{EI:gen}, and use it to establish the following.
\begin{maintheorem}\label{MT:split-presentation}
  Let $k$ be a field of characteristic $0$. Let $X$ be a noetherian algebraic stack over $\spec
  k$. If $X$ has quasi-affine diagonal, then $\DCAT(\QCOH(X)) \simeq \DQCOH(X)$. 
\end{maintheorem}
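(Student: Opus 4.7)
The plan is to upgrade the known bounded-below equivalence $\DCAT^+(\QCOH(X)) \simeq \DQCOH^+(X)$ (from \cite[Thm.~C.1]{hallj_neeman_dary_no_compacts}) to an unbounded equivalence by performing descent along a smooth affine cover, using Mathew's framework of \emph{descendable} commutative algebras to make descent work for unbounded complexes---something that ordinary smooth descent provides only in the bounded-below setting.

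\textbf{Step 1: Choice of cover.} Since $X$ is algebraic and noetherian, pick a smooth surjective morphism $f \colon U \to X$ with $U$ a noetherian affine scheme. The quasi-affine diagonal hypothesis on $X$ forces $f$ to be quasi-affine, so every term $U_n := U \times_X \cdots \times_X U$ of the Čech nerve $U_\bullet$ is a noetherian quasi-affine scheme. For each such $U_n$ the classical equivalence $\DCAT(\QCOH(U_n)) \simeq \DQCOH(U_n)$ is available.

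\textbf{Step 2: Descent via Mathew.} View the pushforward $A := \RDERF f_*\Orb_U$ as a commutative algebra in the symmetric monoidal stable $\infty$-category $\DQCOHinfty(X)$. The heart of the argument is to verify that $A$ is descendable in $\DQCOHinfty(X)$. Once this is in hand, Mathew's descent gives
\begin{equation*}
  \DQCOH(X) \simeq \holim{\Delta}\DQCOH(U_\bullet), \qquad \DCAT(\QCOH(X)) \simeq \holim{\Delta}\DCAT(\QCOH(U_\bullet)),
\end{equation*}
and the comparison $\Psi_X$ intertwines these presentations. Since it is a term-wise equivalence on the Čech nerve by Step 1, it is an equivalence on $X$.

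\textbf{Main obstacle.} The crux is establishing descendability of $A$, and this is the only place where the characteristic-$0$ hypothesis may enter essentially: the positive-characteristic example $X = B\Ga$ from \cite[Thm.~1.1]{hallj_neeman_dary_no_compacts} shows that the analogous statement is false in general, so no purely general descendability criterion for smooth surjective covers can suffice. The expected route is to exploit a characteristic-$0$ input---generic smoothness, Hironaka-style resolution, or finite cohomological amplitude coming from the absence of pathological unipotent cohomology---to refine $f$ (or modify $A$) into a morphism whose pushforward is genuinely descendable. Once descendability is secured, the remaining steps are formal: smooth descent of $\QCOH$ supplies the descent datum for $\DCAT(\QCOH(-))$, and Mathew's convergence theorem promotes this to the required unbounded equivalence.
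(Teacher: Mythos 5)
There are two genuine gaps here. The first is that you leave the descendability of $A = \RDERF f_*\Orb_U$ unproven, flagging it as the ``main obstacle'' and gesturing at generic smoothness or resolution of singularities. In the paper this is Theorem \ref{T:desc-non-aff}, and it is the technical heart of the whole argument: the correct input is that quasi-affine diagonal forces $X$ to be affine-pointed, whence in characteristic $0$ it has finite cohomological dimension, and then $\RDERF f_*\Orb_U$ is descendable of index $\cd(X)+3$. Proving that last implication is not formal --- it requires the apparatus of $\SHom$-amplitude bounds, approximation of $K_A^{\otimes i}$ by pseudo-coherent complexes (Lemma \ref{L:hocolim_pres}), and Lazard's theorem that countably presented flat modules have projective dimension $\leq 1$ (Example \ref{E:lazard}). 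None of this is supplied or even sketched.

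The second gap is structural and would persist even if descendability were granted. Mathew's theorem, applied to the descendable algebra $A$ in $\DQCOHinfty(X)$, gives descent for $\DQCOHinfty$; it says nothing about $\DCAT(\QCOH(X))$. Your second displayed equivalence, $\DCAT(\QCOH(X)) \simeq \holim{\Delta}\DCAT(\QCOH(U_\bullet))$, is precisely the kind of unbounded descent statement that fails in general (combined with your first equivalence and the termwise comparison it immediately yields the theorem, so it is at least as strong as what you are trying to prove), and it cannot be extracted from descendability of $A$ in $\DQCOHinfty(X)$ without first controlling unbounded complexes in $\DCAT(\QCOH(X))$. The paper's actual use of descendability is different and avoids this circularity: descendability is used only to bound the cohomological amplitude of derived products of quasi-coherent sheaves (Proposition \ref{P:desc-prods}), establishing that $\QCOH(X)$ satisfies AB$4^*m$ for some $m$; left-completeness of the standard $t$-structures (Lemma \ref{L:left-complete}, Theorem \ref{T:gab-left-complete}, Corollary \ref{C:equiv}) then bootstraps the known bounded-below equivalence to the unbounded one. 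If you want to salvage your route, you would have to prove descent for $\DCAT(\QCOH(-))$ directly, and the only available way to do that passes through exactly the AB$4^*$ and left-completeness analysis you have omitted.
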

A particularly useful Corollary of Theorem \ref{MT:split-presentation}
is the following result. This was previously only known under
\'etale-local linearizability of the action (e.g.,
\cite{MR0337963,MR3329192,hallj_neeman_dary_no_compacts}).
\begin{corollary*}
  Let $k$ be a field of characteristic $0$. If $U$ is a noetherian Deligne--Mumford stack with an
  action of an affine algebraic group $G$ over $\spec k$, then
  \[
    \DCAT(\QCOH^G(U)) \simeq \DQCOH([U/G]).
  \]
\end{corollary*}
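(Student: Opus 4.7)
My plan is to apply Theorem \ref{MT:split-presentation} with $X = [U/G]$. The identification $\QCOH([U/G]) \simeq \QCOH^G(U)$ is a standard consequence of faithfully flat descent along the $G$-torsor $U \to [U/G]$, so the task reduces to verifying the hypotheses: that $[U/G]$ is a noetherian algebraic stack over $\spec k$ with quasi-affine diagonal. Noetherianity is clear, since $U$ is noetherian and $G$ is of finite type over $k$.

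The main obstacle is verifying that $\Delta_{[U/G]}$ is quasi-affine, which I would check smooth-locally on the target. The smooth surjection $U \times_k U \to [U/G] \times_k [U/G]$ pulls $\Delta_{[U/G]}$ back to the action morphism
\[
\sigma\colon G \times_k U \to U \times_k U, \qquad (g,u) \mapsto (u, gu),
\]
so it suffices to prove $\sigma$ is quasi-affine. I would factor $\sigma$ as the graph $\Gamma_a\colon G \times U \to G \times U \times U$, $(g,u) \mapsto (g,u,gu)$, of the action $a\colon G \times U \to U$, followed by the projection onto the last two factors. The projection is affine because $G$ is affine, and $\Gamma_a$, being the graph of a morphism landing in $U$, is a base change of $\Delta_U$. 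Now $U$ is a noetherian Deligne--Mumford stack, so $\Delta_U$ is representable, quasi-finite, and separated, whence quasi-affine by Zariski's main theorem for algebraic spaces. Thus $\Gamma_a$, and then $\sigma$, are quasi-affine.

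With the quasi-affine diagonal verified, Theorem \ref{MT:split-presentation} applies to $[U/G]$ and yields $\DCAT(\QCOH([U/G])) \simeq \DQCOH([U/G])$. Combined with $\QCOH([U/G]) \simeq \QCOH^G(U)$, this gives the claim. The delicate point is the diagonal check: noetherian DM immediately gives only quasi-finiteness, but the (less obvious) separatedness of DM diagonals upgrades this to quasi-affineness via ZMT, and the affineness of $G$ then propagates the property through the graph factorization.
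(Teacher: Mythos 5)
Your argument is correct and it fleshes out the derivation that the paper only asserts in the introduction (namely, that this statement is a corollary of Theorem \ref{MT:split-presentation}); but it is genuinely different from the proof the paper actually writes out in \S\ref{S:direct-corollary}. There, instead of verifying the hypotheses of Theorem \ref{MT:split-presentation}, the author embeds $G \hookrightarrow \GL_{s,k}$, rewrites $[U/G] \simeq [V/\GL_{s,k}]$ with $V = U \times (\GL_{s,k}/G)$ a noetherian Deligne--Mumford stack, and uses that $\spec k \to B\GL_{s,k}$ is globally split in characteristic $0$ (Example \ref{E:gln-torus-split}, i.e.\ linear reductivity of $\GL_s$) to exhibit $V \to [U/G]$ as a globally retracted cover; then AB$4^*n$ for $\QCOH(V)$ (Proposition \ref{P:qfdiag_ab4starn}) transfers to $\QCOH([U/G])$ by Lemma \ref{L:splitting-cover-products}, and Corollary \ref{C:equiv} concludes. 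That route is deliberately elementary and bypasses the descendability machinery behind Theorem \ref{MT:split-presentation}; yours is shorter on the page but invokes the full strength of that theorem. Your diagonal computation---pulling $\Delta_{[U/G]}$ back to $\sigma\colon G\times U \to U\times U$ and factoring through the graph of the action---is the standard one and is sound. The one point to flag is your assertion that $\Delta_U$ is automatically separated for a noetherian Deligne--Mumford stack: this is true in the Laumon--Moret-Bailly convention but not in the Stacks Project's, and the paper appears to work in the latter (note that ``noetherian Deligne--Mumford $\Q$-stack'' is listed separately from ``noetherian with quasi-finite and separated diagonal'' in Proposition \ref{P:qfdiag_ab4starn}, and the paper elsewhere remarks that separated-diagonal hypotheses ``can be safely ignored''). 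Without separatedness of $\Delta_U$ the diagonal of $[U/G]$ is not quasi-affine and Theorem \ref{MT:split-presentation} does not apply as stated---which is presumably part of why the paper supplies the direct proof. If one reads ``Deligne--Mumford'' as including a separated diagonal, your proof is complete.
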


Theorem \ref{MT:split-presentation} admits various generalizations
(see \S\ref{S:applications}), which are essentially of maximal
generality. For example, we can also prove the following
non-noetherian and mixed characteristic variant.
\begin{maintheorem}\label{MT:aff-diag}
  Let $X$ be a quasi-compact algebraic stack with affine diagonal. If
  $X$ has finite cohomological dimension, then
  $\DCAT(\QCOH(X)) \simeq \DQCOH(X)$.
\end{maintheorem}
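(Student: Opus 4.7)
The plan is to bootstrap from the bounded-below equivalence $\Psi_X^+ \colon \DCAT^+(\QCOH(X)) \simeq \DQCOH^+(X)$---which holds under the affine-diagonal hypothesis by \cite[Thm.~C.1]{hallj_neeman_dary_no_compacts}---to the unbounded equivalence, using finite cohomological dimension as the extra input. The bridge is \emph{left-completeness} of the standard $t$-structures on both sides: if both $\DCAT(\QCOH(X))$ and $\DQCOH(X)$ are left-complete and their bounded-below parts are equivalent via $\Psi_X^+$, then an essentially formal argument produces the unbounded equivalence.

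Let $d = \cd(X)$. For $\DQCOH(X)$, left-completeness of the standard $t$-structure is well known to follow from finite cohomological dimension: the vanishing $H^i(X, \cplx{G}) = 0$ for $\cplx{G} \in \QCOH(X)$ and $i > d$ forces products of connective complexes to remain $(-d-1)$-connective, which is equivalent to left-completeness. For $\DCAT(\QCOH(X))$, I would transport this bound across $\Psi_X^+$: the bounded-below equivalence identifies $\Ext^i_{\QCOH(X)}$ with $\Ext^i_{\Orb_X}$ on appropriate arguments, so the cohomological dimension bound on $X$ translates into a vanishing bound for $\Ext^i_{\QCOH(X)}$ in high degrees, and hence---via the same product-vanishing criterion---into left-completeness of $\DCAT(\QCOH(X))$.

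With both left-completeness properties established, the argument becomes formal. The functor $\Psi_X$ is $t$-exact. For fully faithfulness, given $F, G \in \DCAT(\QCOH(X))$, left-completeness on the source gives $G \simeq \holim{n} \tau^{\geq -n} G$ and hence $\Hom(F, G) \simeq \holim{n} \Hom(F, \tau^{\geq -n} G)$. For fixed $n$ and fixed $i$, orthogonality of the $t$-structure reduces $\Ext^i(F, \tau^{\geq -n} G)$ to $\Ext^i(\tau^{\geq -n-i} F, \tau^{\geq -n} G)$---a hom between bounded-below complexes, which is computed identically in either category by $\Psi_X^+$. Left-completeness of $\DQCOH(X)$ provides the parallel formula for $\Hom(\Psi_X F, \Psi_X G)$, and the two agree. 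Essential surjectivity is dual: lift each $\tau^{\geq -n} \cplx{F}$ of $\cplx{F} \in \DQCOH(X)$ to a compatible tower $\{G_n\}$ in $\DCAT^+(\QCOH(X))$ via $\Psi_X^+$, and set $G = \holim{n} G_n$; left-completeness on both sides forces $\Psi_X G \simeq \cplx{F}$.

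The principal obstacle is left-completeness of $\DCAT(\QCOH(X))$: for a general Grothendieck abelian category this is a subtle condition and cannot be deduced abstractly from $\cd(X) < \infty$. The deduction must route through the bounded-below equivalence in order to transfer the cohomological bound on $X$ to a $\QCOH(X)$-intrinsic $\Ext$-vanishing bound; this is precisely where the affine-diagonal hypothesis is used. That the failure mode is real---and indeed forces a cohomological dimension hypothesis in the theorem---is illustrated by $B \Ga$ in positive characteristic, where $\cd$ is infinite and the corresponding $t$-structure fails to be left-complete.
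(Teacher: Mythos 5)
Your formal skeleton is exactly the paper's: the bounded-below equivalence of \cite[Thm.~C.1]{hallj_neeman_dary_no_compacts} plus left-completeness of the standard $t$-structures on both sides yields the unbounded equivalence (this is Theorem \ref{T:gab-left-complete} and Corollary \ref{C:equiv}, with left-completeness of $\DCAT(\QCOH(X))$ encoded as the condition AB$4^*n(\omega)$ on $\QCOH(X)$ via Lemma \ref{L:left-complete}). The gap is in the step you yourself flag as the principal obstacle. You propose to transfer $\cd(X)<\infty$ across $\Psi_X^+$ to ``a vanishing bound for $\Ext^i_{\QCOH(X)}$ in high degrees.'' But $\cd(X)$ only bounds $\shfcoho^i(X,-)=\Ext^i_{\Orb_X}(\Orb_X,-)$ on quasi-coherent sheaves; it gives no bound on $\Ext^i_{\Orb_X}(M,-)$ for the members $M$ of a generating family of $\QCOH(X)$, which is what the Roos-type criterion for AB$4^*n(\omega)$ (or any product-vanishing argument) requires. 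Already for $X=\spec R$ with $R$ of infinite global dimension one has $\cd(X)=0$ yet $\Ext^i_{\QCOH(X)}(M,N)\neq 0$ for arbitrarily large $i$; exactness of products there holds for a reason entirely unrelated to $\Ext$-vanishing from generators. For a general quasi-coherent $M$ on a stack, $\SRHom_{\Orb_X}(M,N)$ is moreover not quasi-coherent, so the hypothesis $\cd(X)<\infty$ does not even apply to its global sections. There is no formal route from $\cd(X)<\infty$ plus the bounded-below equivalence to AB$4^*n(\omega)$.

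This missing step is where essentially all of the paper's technical work lives. The actual argument chooses a smooth surjection $\pi\colon U\to X$ from an affine scheme, proves that $\RDERF\pi_*\Orb_U$ is \emph{descendable} of index $\leq \cd(X)+3$ in $\DQCOH(X)$ (Theorem \ref{T:desc-non-aff}), and uses the resulting finite filtration of $\Orb_X$ by objects built from $K_A^{\otimes i}\otimes A$ to reduce derived products in $\QCOH(X)$ to derived products in $\QCOH(U)$, which are exact (Proposition \ref{P:desc-prods}). Proving descendability in turn requires $\SHom$- and $\Hom$-amplitude estimates for countably presented complexes, Lazard's theorem that countably presented flat modules $M$ satisfy $\Ext^r_R(M,N)=0$ for $r>1$, and noetherian approximation --- none of which is recoverable from the bounded-below equivalence alone. (A secondary caveat: your claimed left-completeness of $\DQCOH(X)$ from $\cd(X)<\infty$ also needs justification, since products in $\DQCOH(X)$ are not computed termwise and $\DQCOH(X)$ need not be compactly generated here; in the paper the boundedness of such products is again an output of AB$4^*n(\omega)$ for $\QCOH(X)$, via Example \ref{E:adjoints-prod-refined}, not an independent input.)
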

Recall that a quasi-compact and quasi-separated algebraic stack $X$
has \emph{cohomological dimension $\leq d$} if $\shfcoho^i(X,F) = 0$
for all $i>d$ and quasi-coherent sheaves $F$ (see
\cite[\S2]{perfect_complexes_stacks} and \cite[Thms.~B \&
C]{hallj_dary_alg_groups_classifying}). Algebraic spaces and algebraic
$\Q$-stacks with affine stabilizers have finite cohomological
dimension. In positive characteristic, having finite cohomological
dimension and affine stabilizers is equivalent to having linearly
reductive stabilizers. In mixed characteristic it is more complicated,
but these criteria persist for stacks with finitely presented inertia
(e.g., noetherian). Some of these subtleties are discussed in detail
in \cite[App.~A]{etale_local_stacks}.

We will give two proofs of Theorem \ref{MT:smooth}. The first proof
will be a direct calculation. The second proof will employ derived
functors of small products in Grothendieck abelian categories. This
approach will also be used to establish Theorem
\ref{MT:split-presentation}. This idea for schemes goes back to
\cite[App.~A]{MR1647519} and for stacks to
\cite{MR2717173,2009arXiv0902.4016H,MR2570954}.

In order to understand derived functors of small products for
algebraic stacks, we use the \emph{descendable} morphisms of Mathew
\cite{MR3459022} (see \S\ref{S:desc-non-aff}). A key technique that we
develop is cohomological calculations of $\SRHom_{\Orb_X}(M,N)$, where
$M$ and $N$ are quasi-coherent $\Orb_X$-modules, and $M$ is
\emph{countably} presented. In particular, $\SRHom_{\Orb_X}(M,N)$ is
not necessarily quasi-coherent. These calculations are new, even for
varieties of finite type over a field.

But we also provide a direct proof of the Corollary above without this
general machinery (see \S\ref{S:direct-corollary}). We also prove
a comparison between quasi-coherent and lisse-\'etale cohomology
(Theorem \ref{T:pushforwardagree}), which generalizes a well-known
result on the bounded below derived category (e.g.,
\cite[Prop.~2.1]{hallj_neeman_dary_no_compacts}).
\subsection*{Acknowledgements}
It is a pleasure to thank David Rydh, Amnon Neeman, Ben Antieau, and Ben Lim
for a number of insightful comments and suggestions.
\section{Direct proof of Theorem \ref{MT:smooth}}
An algebraic stack $X$ is \emph{affine-pointed} if for every field
$k$, every morphism $x\colon \spec k \to X$ is affine. If $X$ has
quasi-affine diagonal, then it is affine-pointed
\cite[Lem.~4.7]{hallj_dary_coherent_tannakian_duality}. Surprisingly, it is an open
question whether affine stabilizers implies affine-pointed
\cite[Qstn.~1.8]{hallj_dary_coherent_tannakian_duality}.

We will prove the following mild refinement of Theorem
\ref{MT:smooth}.
\begin{theorem}\label{T:reg-conc}
  Let $X$ be a noetherian and affine-pointed algebraic stack of finite
  cohomological dimension. If $X$ is locally regular of finite Krull
  dimension, then
  \begin{enumerate}
  \item \label{TI:reg-conc:equiv} $\DCAT(\QCOH(X)) \simeq \DQCOH(X)$;
  \item \label{TI:reg-conc:comp} $\DCAT^b(\COH(X)) \simeq \PERF(X) \simeq \DQCOH(X)^c$; 
  \item \label{TI:reg-conc:gen} $\DQCOH(X)$ is compactly generated by
    perfect complexes; and
  \item \label{TI:reg-conc:fd} There exists an integer $n$ such that for all $M\in \COH(X)$, $N\in \DQCOH(X)$, and $r \in \Z$ there is a quasi-isomorphism:
    \[
      \trunc{\geq -r}\RHom_{\Orb_X}(M,N) \simeq \trunc{\geq
        -r}\RHom_{\Orb_X}(M,\trunc{\geq -r-n}N).
    \]
  \end{enumerate}
\end{theorem}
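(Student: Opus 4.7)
My plan is to prove (4) first as the essential technical estimate, then derive (3) and (2) from it, and finally reduce (1) to (3) via the bounded-below equivalence and a standard compact-generation argument. For (4), the key input is that $X$ being noetherian and locally regular of finite Krull dimension gives a uniform bound on the Tor-amplitude of coherent sheaves. Choosing a smooth surjection $p\colon U \to X$ with $U$ a regular noetherian scheme of Krull dimension $\leq D$, the Auslander--Buchsbaum--Serre theorem shows every finitely generated module over a local ring of $U$ has projective dimension $\leq D$; since perfectness and Tor-amplitude descend through smooth covers, each $M \in \COH(X)$ is perfect of Tor-amplitude in $[-a,0]$ for a fixed $a$. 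Set $n := a + d$ where $d := \cd(X)$, and apply $\RHom_{\Orb_X}(M,-)$ to the truncation triangle $\trunc{\leq -r-n-1}N \to N \to \trunc{\geq -r-n}N$. Using $\SRHOM_{\Orb_X}(M,-) \simeq M^{\vee} \ltensor -$ for $M$ perfect, with $M^{\vee}$ of amplitude $[0,a]$, the complex $M^{\vee} \ltensor \trunc{\leq -r-n-1}N$ lies in $\DCAT^{\leq -r-d-1}$; applying $\RDERF\Gamma(X,-)$, which has cohomological amplitude $\leq d$ on $\DQCOH(X)$, then places $\RHom_{\Orb_X}(M,\trunc{\leq -r-n-1}N)$ in $\DCAT^{\leq -r-1}$, so that $\trunc{\geq -r}$ kills it and (4) follows.

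For (2), the perfectness observation in (4) gives $\COH(X) \subseteq \PERF(X)$ and hence $\DCAT^b(\COH(X)) \subseteq \PERF(X)$; the reverse inclusion holds because any perfect complex on a noetherian stack has bounded coherent cohomology. For (3), compactness of perfect complexes follows from the identity $\RHom_{\Orb_X}(P,-) = \RDERF\Gamma(X, P^{\vee} \ltensor -)$ together with the Bökstedt--Neeman criterion: $\RDERF\Gamma$ preserves coproducts because $\cd(X) < \infty$. To see that perfect complexes generate, take $0 \neq N \in \DQCOH(X)$, let $s$ be minimal with $\COHO{s}(N) \neq 0$, and choose a nonzero coherent subsheaf $M \hookrightarrow \COHO{s}(N)$ (which exists because $X$ is noetherian); the canonical map $M[-s] \simeq \trunc{\leq s}N \to N$ is nonzero with perfect source. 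The equality $\PERF(X) = \DQCOH(X)^{c}$ then follows from Neeman's characterization of compacts as the thick closure of a set of compact generators.

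For (1), affine-pointedness of $X$ provides the bounded-below equivalence $\DCAT^+(\QCOH(X)) \simeq \DQCOH^+(X)$ via \cite[Thm.~C.1]{hallj_neeman_dary_no_compacts}. Combined with (3) and the inclusions $\PERF(X) \subseteq \DCAT^b(\COH(X)) \subseteq \DCAT^+(\QCOH(X))$, the functor $\Psi_X\colon \DCAT(\QCOH(X)) \to \DQCOH(X)$ preserves coproducts and is fully faithful on a set of compact generators of $\DQCOH(X)$; the standard compact-generation argument recorded in \cite[Thm.~1.2]{hallj_neeman_dary_no_compacts} upgrades this to the desired unbounded equivalence. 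The main obstacle is (4): extracting the uniform Tor-amplitude bound and then carefully bookkeeping amplitudes so that $\trunc{\geq -r}$ annihilates $\RHom_{\Orb_X}(M, \trunc{\leq -r-n-1}N)$ is the technical heart of the argument, after which all remaining parts follow by standard techniques.
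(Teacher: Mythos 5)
Your treatment of \itemref{TI:reg-conc:fd} is essentially the paper's argument in different packaging: regularity plus finite Krull dimension gives a uniform bound on the amplitude of $\SRHom_{\Orb_X}(M,\Orb_X)$, finite cohomological dimension controls $\RDERF\Gamma$ on bounded-above complexes, and the truncation triangle does the rest; the bookkeeping $n=a+d$ checks out. Parts \itemref{TI:reg-conc:comp} and \itemref{TI:reg-conc:equiv} also follow the paper's route (you should be slightly more careful that $\DCAT^b(\COH(X))\to\DQCOH(X)$ is fully faithful before writing $\DCAT^b(\COH(X))\subseteq\PERF(X)$ --- this is where the bounded-below equivalence $\DCAT^+(\QCOH(X))\simeq\DQCOH^+(X)$ enters --- but that is a standard point).

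The genuine gap is in your generation argument for \itemref{TI:reg-conc:gen}. You take $0\neq N\in\DQCOH(X)$ and ``let $s$ be minimal with $\COHO{s}(N)\neq 0$.'' For an unbounded complex no such minimal $s$ need exist: the cohomology can be nonzero in arbitrarily negative degrees. Your construction of a nonzero map $M[-s]\to N$ therefore only works for $N$ bounded below, and producing a nonzero map from a compact object into a genuinely \emph{unbounded} $N$ is precisely the difficulty the theorem is addressing. This is also exactly where part \itemref{TI:reg-conc:fd} must be used, and you announce that you will derive (3) from (4) but then never invoke it. The fix is the paper's: pick \emph{any} $r$ with $\COHO{-r}(N)\neq 0$, pass to the bounded-below truncation $\trunc{\geq -r-n}N$, use the bounded-below equivalence to realize it as a complex of quasi-coherent sheaves and extract a coherent $M$ with a nonzero map $M[r]\to \trunc{\geq -r-n}N$, and then apply \itemref{TI:reg-conc:fd} to conclude that $\Hom(M[r],N)\to\Hom(M[r],\trunc{\geq -r-n}N)$ is an isomorphism, so the map lifts to a nonzero map $M[r]\to N$. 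Without this last step your argument does not establish compact generation of the unbounded category.
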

\begin{proof}
  We first prove \itemref{TI:reg-conc:fd}. Now since $X$ is locally
  regular of finite Krull dimension, there exists an integer $d\geq 0$
  such that for all $M\in \COH(X)$,
  $\SRHom_{\Orb_X}(M,\Orb_X) \in \DCAT^{[0,d]}_{\COH}(X) \subseteq
  \PERF(X)$. Also, $X$ has finite cohomological dimension, so there is an integer $t\geq 0$ such that
  \[
    \trunc{\geq -r}\RDERF \Gamma(X,L) \simeq \trunc{\geq -r}\RDERF \Gamma(X,\trunc{\geq -r-t'}L)
  \]
  for all integers $r$, $t'\geq t$, and $L \in \DQCOH(X)$
  \cite[Thm.~2.6(i)]{perfect_complexes_stacks}. Hence,
  \begin{align*}
    \trunc{\geq -r}\RHom_{\Orb_X}(M,N) &\simeq \trunc{\geq -r}\RDERF \Gamma(X,\RDERF\SHom_{\Orb_X}(M,N))\\
                                       &\simeq \trunc{\geq -r}\RDERF \Gamma(X,\trunc{\geq -r -t}(\SRHom_{\Orb_X}(M,\Orb_X) \tensor^{\LDERF}_{\Orb_X} N))\\
                                       &\simeq \trunc{\geq -r}\RDERF \Gamma(X,\trunc{\geq -r -t}(\SRHom_{\Orb_X}(M,\Orb_X) \tensor^{\LDERF}_{\Orb_X} \trunc{\geq -r -t -d}N))\\
                                       &\simeq \trunc{\geq -r}\RDERF \Gamma(X,\SRHom_{\Orb_X}(M,\Orb_X) \tensor^{\LDERF}_{\Orb_X} \trunc{\geq -r -t -d}N)\\
    &\simeq \trunc{\geq -r}\RHom_{\Orb_X}(M,\trunc{\geq -r -t -d}N).
  \end{align*}
  Taking $n=t+d$ gives the claim.
  
  Since $X$ is has finite cohomological dimension,
  $\PERF(X) \simeq \DQCOH(X)^c$
  \cite[Rem.~4.6]{perfect_complexes_stacks}. Also $X$ is locally
  regular, so $\DCAT^b_{\COH}(X)\simeq \PERF(X)$; hence, if
  $M\in \COH(X)$, then $M[n] \in \DQCOH(X)^c$ for all $n\in \Z$. By
  \cite[Thm.~C.1]{hallj_neeman_dary_no_compacts},
  $\DCAT^b(\QCOH(X)) \simeq \DQCOH^b(X)$. In particular,
  $\DCAT^b_{\COH(X)}(\QCOH(X)) \simeq \DCAT^b_{\COH}(X)$. The
  equivalence $\DCAT^b(\COH(X)) \simeq \DCAT^b_{\COH(X)}(\QCOH(X))$
  follows from a standard argument. Putting these all together, we
  have $\DCAT^b(\COH(X)) \simeq \PERF(X) \simeq \DQCOH(X)^c$. This
  proves \itemref{TI:reg-conc:comp}.

  We will now prove that the set $\COH(X)$ compactly generates
  $\DQCOH(X)$. This will establish \itemref{TI:reg-conc:gen}. Let
  $N\in \DQCOH(X)$ and assume that $\COHO{-r}(N) \neq 0$.  Now
  $\trunc{\geq -r-n}N \in \DQCOH^+(X)$, so it is quasi-isomorphic to a
  bounded below complex
  $Q^\bullet = (\cdots \to Q^d \xrightarrow{\partial^d} Q^{d+1} \to
  \cdots )$ of quasi-coherent sheaves on $X$
  \cite[Thm.~C.1]{hallj_neeman_dary_no_compacts}. By assumption,
  $\COHO{-r}(Q^\bullet) \neq 0$, so there is a non-zero surjection of
  quasi-coherent sheaves
  $\ker (\partial^{-r}) \twoheadrightarrow \COHO{-r}(Q^\bullet)$. Now
  choose a coherent $\Orb_X$-module $M$ together with a morphism
  $M \to \ker (\partial^{-r})$ such that the composition
  $M \to \ker (\partial^{-r}) \to \COHO{-r}(Q^\bullet)$ is non-zero.
  Indeed, every quasi-coherent sheaf on $X$ is a direct limit of its
  coherent subsheaves \cite[Prop.~15.4]{MR1771927} (the separated
  diagonal hypothesis can be safely ignored). It follows immediately
  that there is a non-zero morphism $M[r] \to \trunc{\geq -r-n}N$ in
  $\DQCOH(X)$. By \itemref{TI:reg-conc:fd}, we have a non-zero
  morphism $M[r] \to N$ in $\DQCOH(X)$. The claim follows.

  Finally, \cite[Thm.~1.2]{hallj_neeman_dary_no_compacts} informs us
  that \itemref{TI:reg-conc:gen} implies \itemref{TI:reg-conc:equiv}.
\end{proof}
\begin{proof}[Proof of Theorem \ref{MT:smooth}]
  Since $X$ has quasi-affine diagonal, it is affine pointed
  \cite[Lem.~4.7]{hallj_dary_coherent_tannakian_duality}; in
  particular, it has affine stabilizers. Since $k$ has characteristic
  $0$, it now follows that $X$ has finite cohomological dimension \cite[Thms.~B \&
  C]{hallj_dary_alg_groups_classifying}. Now $X$ is smooth and of
  finite type, so there is a smooth covering $p \colon \spec A \to X$,
  where $\spec A$ is smooth over $\spec k$. It follows immediately
  that $\spec A$ is regular of finite Krull dimension. The result now
  follows from Theorem \ref{T:reg-conc}.
\end{proof}
\section{Derived functors of small products in abelian categories}
Let $\mathcal{A}$ be an abelian category. Let $\Lambda$ be a
set. Suppose that $\Lambda$-indexed products exist in
$\mathcal{A}$. It is convenient to view the product as a functor
\[
  \prod_{\lambda\in \Lambda} \colon \mathcal{A}^\Lambda \to \mathcal{A}
\]
The product category $\mathcal{A}^\Lambda$ is naturally abelian and
the functor $\prod_{\lambda\in \Lambda}$ is readily seen to be
left-exact.

If $\mathcal{A}$ has enough injectives, then so too does the product
category $\mathcal{A}^\Lambda$. Hence, we can consider the
right-derived functors of $\prod_{\lambda \in \Lambda}$, which we
denote via $\prod_{\lambda\in \Lambda}^{(p)}$. We say that
$\mathcal{A}$ satisfies AB$4^*n(\Lambda)$ if
$\prod_{\lambda\in \Lambda}^{(p)} \equiv 0$ for all $p>n$. If
AB$4^*n(\Lambda)$ is satisfied for all sets $\Lambda$, then we say
that $\mathcal{A}$ satisfies AB$4^*n$. This notion was introduced and
studied in \cite{MR2197371}. Just as in the recent work
\cite{antieau_uniqueness_dg}, we will be principally concerned with
the condition AB$4^*n(\omega)$. A number of results related to those
of this section are also established in \cite{2009arXiv0902.4016H}.
\begin{example}\label{E:groth_ab_prod}
  It is a well-known theorem that Grothendieck abelian categories
  admit small products \cite[Tag \spref{07D8}]{stacks-project} and
  have enough injectives. We can also consider the unbounded derived
  category $\DCAT(\mathcal{A})$ of $\mathcal{A}$. Since $\mathcal{A}$
  is Grothendieck abelian, $\DCAT(\mathcal{A})$ also has small
  products. These are formed by taking termwise products of
  K-injective resolutions \cite[Tag
  \spref{07D9}]{stacks-project}. Let $\Lambda$ be a set. If
  $\{A_\lambda\}_{\lambda\in \Lambda}$ is a set of objects of
  $\mathcal{A}$, then
  \[
    \COHO{p}\left(\prod_{\lambda\in \Lambda} A_\lambda[0]\right) \cong
    \begin{cases}
      0 & p<0; \\
      \prod_{\lambda\in \Lambda}^{(p)} A_\lambda & p\geq 0.
    \end{cases}
  \]
  Furthermore, let $n\geq 0$. Assume that $\mathcal{A}$ satisfies
  AB$4^*n(\Lambda)$.  If $\{N_\lambda \}_{\lambda \in \Lambda}$ is a
  set of objects of $\DCAT(\mathcal{A})$, then for every $r\in \Z$ we
  have
  \begin{equation}
  \trunc{>r}\prod_{\lambda\in \Lambda} N_\lambda \simeq
  \trunc{>r}\prod_{\lambda\in \Lambda} \trunc{>r-n}N_\lambda.\label{eq:products}
\end{equation}
This is similar to \cite[Tag \spref{07K7}]{stacks-project}, but we
reproduce the argument here. Also see \cite[Prop.~8.14]{antieau_uniqueness_dg}. Let $N_\lambda \to I_\lambda^{\bullet}$
be a quasi-isomorphism to a K-injective complex of
$\mathcal{A}$-injectives. Consider the exact sequence of complexes:
  \[
    \xymatrix{0 \ar[r] & L_\lambda^{r}[-r+n] \ar[r] &
      \sigma^{>r-n-1}I_\lambda^{\bullet} \ar[r] &
      \trunc{>r-n-1}I_\lambda^{\bullet} \ar[r] & 0,}
  \]
  where $\sigma$ denotes the brutal truncation and
  $L_\lambda^r \in \mathcal{A}$. Taking products in
  $\DCAT(\mathcal{A})$ we obtain a distinguished triangle:
  \[
    \xymatrix@C-1pc{ \displaystyle{\prod_{\lambda\in \Lambda}}L_\lambda^r[-r+n]
      \ar[r] & \displaystyle{\prod_{\lambda\in \Lambda}}
      \sigma^{>r-n-1}I_\lambda^{\bullet} \ar[r] & \displaystyle{\prod_{\lambda\in
        \Lambda}} \trunc{>r-n-1}I_\lambda^{\bullet} \ar[r] &
      \displaystyle{\prod_{\lambda\in \Lambda}}L_\lambda^r[-r+n+1]}
  \]
  Now
  \[
    \prod_{\lambda\in \Lambda} \sigma^{>r-n-1}I_\lambda^{\bullet}
    \simeq \sigma^{>r-n-1}\prod_{\lambda\in \Lambda}
    I_\lambda^{\bullet}.
  \]
  This implies that 
  \[
    \trunc{>r-n}\prod_{\lambda\in
      \Lambda} \sigma^{>r-n-1}I_\lambda^{\bullet} \simeq
    \trunc{>r-n}\prod_{\lambda\in \Lambda} N_\lambda.
  \]
  But $\mathcal{A}$ satisfies AB$4^*n(\Lambda)$, so 
  \[
    \trunc{>r}\prod_{\lambda\in \Lambda}L_\lambda^r[-r+n] \simeq \trunc{>r}\left((\prod_{\lambda\in \Lambda} L_\lambda^r[0])[-r+n]\right) \simeq 0.
  \]
  It follows that
  \[
    \trunc{>r}\prod_{\lambda\in \Lambda} N_\lambda \simeq
    \trunc{>r}\prod_{\lambda\in \Lambda}\trunc{>r-n-1}N_\lambda.
  \]
  We also have a distinguished triangle in $\DCAT(\mathcal{A})$:
  \[
    \xymatrix@-1.5pc{\displaystyle{\prod_{\lambda\in \Lambda}} \COHO{r-n}(N_\lambda)[n-r] \ar[r]
      & \displaystyle{\prod_{\lambda\in \Lambda} \trunc{>r-n}} N_\lambda \ar[r] &
      \displaystyle{\prod_{\lambda\in \Lambda}} \trunc{>r-n-1}N_\lambda \ar[r] &
      \displaystyle{\prod_{\lambda\in \Lambda}} \COHO{r-n}(N_\lambda)[n-r+1].}
  \]
  As before,
  $\trunc{>r}{\prod_{\lambda\in \Lambda}} \COHO{r-n}(N_\lambda)[n-r]
  \simeq 0$. It follows now that
 \[
    \trunc{>r}\prod_{\lambda\in \Lambda} N_\lambda \simeq
    \trunc{>r}\prod_{\lambda\in \Lambda}\trunc{>r-n}N_\lambda.
  \] 
\end{example}
\begin{example}
  Let $X$ be an algebraic stack. Then the abelian category $\QCOH(X)$
  is Grothendieck abelian. If $X$ is noetherian, this is reasonably
  straightforward, and follows immediately from
  \cite[Prop.~15.4]{MR1771927}. In general, this is \cite[Tag
  \spref{0781}]{stacks-project}.
\end{example}
\begin{example}\label{E:site-stalk-prod}
  Let $\mathcal{C}$ be a site. Let $\Orb_{\mathcal{C}}$ be a sheaf of
  rings on $\mathcal{C}$. Then $\MOD(\mathcal{C},\Orb_{\mathcal{C}})$
  is Grothendieck abelian. Let $u \colon \mathcal{C} \to \SETS$ be
  a point of $\mathcal{C}$. If $\{C_\lambda\}_{\lambda\in \Lambda}$ is
  a set of sheaves of $\Orb_{\mathcal{C}}$-modules, then for all
  $p\geq 0$ there is a natural isomorphism
  \[
    \Bigl(\prod_{\lambda \in \Lambda}^{(p)} C_\lambda\Bigr)_u \simeq
    \varinjlim_{(V,v)} \prod_{\lambda \in \Lambda}
    \shfcoho^p(V,C_\lambda),
  \]
  where the limit is over a cofinal system of neighbourhoods $(V,v)$
  of $u$. This follows by making the natural modifications to the
  argument given in \cite[Prop.~1.6]{MR2197371}.
\end{example}
\begin{example}\label{E:adjoints-prod}
  Let $F \colon \mathcal{A} \leftrightarrows \mathcal{A}' \colon G$ be
  an adjoint and exact pair of functors between Grothendieck abelian
  categories. Since $F$ is exact, 
  $\RDERF G \colon \DCAT(\mathcal{A}') \to \DCAT(\mathcal{A})$ has a
  left adjoint, so preserves products. Hence, if
  $\{M_\lambda\}_{\lambda\in \Lambda}$ is a set objects of
  $\mathcal{A}'$, then
  \[
    \RDERF G(\prod_{\lambda\in \Lambda} M_\lambda[0]) \simeq
    \prod_{\lambda \in \Lambda} \RDERF G(M_\lambda[0]).
  \]
  But $\prod_{\lambda\in \Lambda}$ preserves injectives, 
  so 
  $\prod^{(p)}_{\lambda\in \Lambda} G(M_\lambda) \simeq
  G(\prod^{(p)}_{\lambda\in \Lambda} M_\lambda)$. In particular,
  \begin{enumerate}
  \item\label{EI:adjoints-prod:contra} if
    $\prod_{\lambda\in \Lambda}^{(n)} G(M_\lambda) \neq 0$ for some
    $n$, then $\mathcal{A}'$ does not satisfy AB$4^*n(\Lambda)$; and
  \item\label{EI:adjoints-prod:cons} if $G$ is conservative and
    $\mathcal{A}$ satisfies AB4${}^*n(\Lambda)$ for some $n$, then
    the same is true of $\mathcal{A}'$.
  \end{enumerate}
\end{example}
We now give some key examples. While we have not seen these in the
literature before, we expect at least some of them to have been known
in some cases to experts.
\begin{example}
\label{E:ab4n-reg}
Let $X$ be an affine-pointed algebraic stack of finite cohomological dimension. If $X$
is locally regular of finite Krull dimension, then $\QCOH(X)$
satisfies AB$4^*n$ for some $n>0$. To see this, we will use the criterion of
\cite[Thm.~1.3(c')]{MR2197371}. It is thus sufficient to exhibit an
$n$ such that $\Ext^{i}_{\QCOH(X)}(M,-)$ for all $i>n$ and
$M \in \COH(X)$. Observe that
$\DCAT^+(\QCOH(X)) \simeq \DQCOH^+(X)$
\cite[Thm.~C.1]{hallj_neeman_dary_no_compacts}; thus, if
$M \in \COH(X)$ and $N\in \QCOH(X)$, then
  \begin{align*}
    \Ext^i_{\QCOH(X)}(M,N) &\simeq \COHO{i}(\RHom_{\Orb_X}(M,N)) \simeq \COHO{0}(\RHom_{\Orb_X}(M,N[i])). 
  \end{align*}
  Now take $n$ as in Theorem \ref{T:reg-conc}\itemref{TI:reg-conc:fd}. 
\end{example}
\begin{example}\label{E:ab4n-res}
  Let $X$ be a quasi-compact algebraic stack with affine diagonal. If
  $X$ has the compact resolution property \cite[\S
  7]{perfect_complexes_stacks}, then $\QCOH(X)$ satisfies
  AB$4^*n$ for some $n>0$. For example, algebraic stacks of finite cohomological dimension with the
  resolution property (i.e., every finite type quasi-coherent sheaf is
  a quotient of a finite rank vector bundle) have the compact
  resolution property.  The argument given in Example \ref{E:ab4n-reg}
  works with only minor changes, see \cite[Lem.~7.6]{perfect_complexes_stacks}.
\end{example}
\begin{example}\label{E:counter}
  Let $k$ be a field of characteristic $p>0$. Then
  $\QCOH(B\mathbf{G}_{a,k})$ does not satisfy AB$4^*n(\omega)$ for any
  $n>0$ \cite{MR2875857}. This holds, more generally, for \emph{poorly
    stabilized} algebraic stacks
  \cite[\S4]{hallj_neeman_dary_no_compacts}. This condition is
  equivalent to having a point of characteristic $p>0$ and a subgroup
  of the stabilizer at that point isomorphic to $\Ga$ (e.g.,
  $B\GL_{3,\F_2}$ but not $B\Gm$).
\end{example}
\begin{example}\label{E:spectral-space-mod}
  If $X=(|X|,\Orb_X)$ is a ringed space, where $|X|$ is spectral
  (e.g., noetherian) of Krull dimension $d$, then $\MOD(X)$ satisfies
  AB$4^*d$. By Example \ref{E:site-stalk-prod}, it is sufficient to
  prove that for every $x\in |X|$ there is a cofinal system of
  neighbourhoods $V$ of $x$ with $\shfcoho^{q}(V,F)=0$ for all
  sheaves of $\Orb_X$-modules $F$. Now $|X|$ has Krull dimension $d$,
  so $V$ has Krull dimension $\leq d$ for every open
  $V \subseteq |X|$. It follows that $\shfcoho^q(V,F) = 0$ for all
  $q>d$ \cite{MR1179103} (also see \cite[Tag
  \spref{0A3G}]{stacks-project}) for every quasi-compact open
  $V \subseteq |X|$. But $|X|$ is spectral, so the quasi-compact open
  subsets form a basis, and we have the claim.
\end{example}
\begin{example}\label{MT:dm-mod-abn}
  If $X$ is a quasi-compact and quasi-separated Deligne--Mumford stack
  of Krull dimension $d$ (e.g., noetherian), then $\MOD(X_{\et})$
  satisfies AB$4^*d$. To see this, choose an \'etale cover
  $p\colon U \to X$ by a quasi-compact and quasi-separated scheme $U$
  of Krull dimension $d$. Then we have an adjoint pair of functors:
  \[
    p_! \colon \MOD(U_{\et}) \leftrightarrows \MOD(X_{\et}) \colon p^{-1},
  \]
  where both $p_!$ and $p^{-1}$ are exact, and $p^{-1}$ is
  conservative. By Example \ref{E:adjoints-prod}, we are reduced to
  the situation where $X=U$ is a scheme with $|X|$ spectral of Krull
  dimension $d$. Now argue just as in Example
  \ref{E:spectral-space-mod}, but this time use quasi-compact \'etale
  morphisms $V \to X$ as the basis.  
\end{example}
\begin{example}\label{E:ff_desc}
  Let $X$ be a quasi-compact and quasi-separated algebraic stack and
  let $\Lambda$ be a set. Let $p\colon X' \to X$ be a flat and affine morphism.
  \begin{enumerate}
  \item \label{EI:ff_desc:affine} If $\QCOH(X)$ satisfies AB$4^*n(\Lambda)$, then $\QCOH(X')$ satisfies AB$4^*n(\Lambda)$.
  \item \label{EI:ff_desc:finite} If $\QCOH(X')$ satisfies AB$4^*n(\Lambda)$ and $p$ is finite, finitely presented, and surjective, then $\QCOH(X)$ satifies AB$4^*n(\Lambda)$. 
  \end{enumerate}
  For \itemref{EI:ff_desc:affine}: we have the adjoint pair
  $p^* \colon \QCOH(X) \leftrightarrows \QCOH(X') \colon p_*$, where
  $p^*$ is exact and $p_*$ is exact and conservative ($p$ is
  affine). By Example \ref{E:adjoints-prod}, the claim follows.
  
  Similarly, for \itemref{EI:ff_desc:finite}: we have an adjoint pair
  $p_* \colon \QCOH(X') \leftrightarrows \QCOH(X) \colon p^\times$,
  where $p_*$ is exact and $p^\times$ is exact and conservative
  \cite[Cor.~4.15]{perfect_complexes_stacks}. The claim follows from
  Example \ref{E:adjoints-prod} again.
\end{example}
\begin{example}\label{E:open}
A variant of Example \ref{E:ff_desc}\itemref{EI:ff_desc:affine} is the following. Let $j\colon U \subseteq X$ be a quasi-compact open immersion of algebraic stacks. Assume that $X$ is quasi-compact with affine diagonal or noetherian and affine-pointed. If $\QCOH(X)$ is AB$4^*n(\Lambda)$, then so is $\QCOH(U)$. Indeed, let $\{N_\lambda\}_{\lambda \in \Lambda}$ be a set of quasi-coherent sheaves on $U$. By \cite[Prop.~2.1]{hallj_neeman_dary_no_compacts} it follows immediately that there is a quasi-isomorphism in $\DCAT(\QCOH(U))$:
\[
  \prod_{\lambda \in \Lambda} N_\lambda \simeq j^*\RDERF
  j_{\QCOH,*}(\prod_{\lambda\in \Lambda} N_\lambda) \simeq
  j^*(\prod_{\lambda \in \Lambda} \RDERF j_{\QCOH,*} N_\lambda).
\]
The right hand side is bounded above, so the claim follows.
\end{example}
\begin{example}\label{E:mv_desc}
  Consider a cartesian square of algebraic stacks:
  \[
    \xymatrix{U' \ar[r]^{j'} \ar[d]_{f_U} & X' \ar[d]^f \\ U \ar[r]^j & X}
  \]
  that are all quasi-compact with affine diagonal or noetherian and affine-pointed. Assume that 
  \begin{enumerate}
  \item $j$ is a quasi-compact open immersion with finitely presented complement $i\colon Z \hookrightarrow X$; and
  \item $f$ is concentrated, flat, and $f_Z \colon X'\times_X Z \to Z$ is an isomorphism. 
  \end{enumerate}
	 That is, the square is a flat Mayer--Vietoris square, in the sense of \cite[Defn.~1.2]{mayer-vietoris}. This is satisfied, for example, when $f$ is a representable \'etale neigbourhood of $Z$ (i.e., $f$ is \'etale and $f_Z$ is an isomorphism).  We now claim that if $X'$ and $U'$ satisfy AB$4^*n(\Lambda)$ for some $n$, then $X$ satisfies AB$4^*m(\Lambda)$ for some $m$. To see this, we let $\{ M_\lambda \}_{\lambda \in \Lambda}$ be a set of quasi-coherent sheaves on $X$. We may form the Mayer--Vietoris triangle (see point (i) of the proof of \cite[Thm.~4.4]{mayer-vietoris}) in $\DQCOH^+(X)$:
	 \[
	 	\xymatrix{M_\lambda \ar[r] & \RDERF j_{*}j^*M_\lambda \oplus \RDERF f_*f^*M_\lambda \ar[r] & \RDERF f_*f^*\RDERF j_*j^*M_{\lambda} \ar[r] & M_{\lambda}[1].}
	 \]	
	 By \cite[Prop.~2.1]{hallj_neeman_dary_no_compacts}, this induces the following distinguished triangle in $\DCAT(\QCOH(X))$:
	 \[
	 	\xymatrix{M_\lambda \ar[r] & \RDERF j_{\QCOH,*}j^*M_\lambda \oplus \RDERF f_{\QCOH,*}f^*M_\lambda \ar[r] & \RDERF f_{\QCOH,*}f^*\RDERF j_{\QCOH,*}j^*M_{\lambda} \ar[r] & M_{\lambda}[1].}
	 \]	
	 We now take the product over $\lambda \in \Lambda$ in $\DCAT(\QCOH(X))$ to obtain the following triangle:
	 \[
	 \xymatrix@C-1.5pc{\displaystyle{\prod_{\lambda \in \Lambda}} M_\lambda \ar[r] & \displaystyle{\prod_{\lambda \in \Lambda} (\RDERF j_{\QCOH,*}j^*M_\lambda \oplus \RDERF f_{\QCOH,*}f^*M_\lambda)} \ar[r] & \displaystyle{\prod_{\lambda \in \Lambda} \RDERF f_{\QCOH,*}f^*\RDERF j_{\QCOH,*}j^*M_{\lambda} }\ar[r] & \displaystyle{\prod_{\lambda \in \Lambda} M_{\lambda}[1].}}
	 \]	
	 Since pushforwards commute with products, we see that $\prod_{\lambda \in \Lambda} M_\lambda$ belongs to a triangle whose other terms are 
	 \begin{align*}
	 \displaystyle{\prod_{\lambda \in \Lambda} (\RDERF j_{\QCOH,*}j^*M_\lambda \oplus \RDERF f_{\QCOH,*}f^*M_\lambda)} &\simeq \displaystyle{\RDERF j_{\QCOH,*}\left(\prod_{\lambda \in \Lambda}j^*M_\lambda\right) \oplus \RDERF f_{\QCOH,*}\left(\prod_{\lambda \in \Lambda}f^*M_\lambda\right)}
\end{align*}	 	
and \begin{align*}
\displaystyle{\prod_{\lambda \in \Lambda} \RDERF f_{\QCOH,*}f^*\RDERF j_{\QCOH,*}j^*M_{\lambda} } &\simeq \displaystyle{ \RDERF f_{\QCOH,*}\prod_{\lambda \in \Lambda}f^*\RDERF j_{\QCOH,*}j^*M_{\lambda} }.
\end{align*}
Since $f$ and $j$ are concentrated, they have cohomological dimension
bounded by some $r$. It follows that both of the above objects of
$\DCAT(\QCOH(X))$ have cohomological support bounded by $r+n$. It
follows immediately that $\prod_{\lambda \in \Lambda} M_\lambda$ has
cohomological support bounded by $r+n$. Hence, $\QCOH(X)$ satisfies
AB$4^*(r+n)(\Lambda)$.
\end{example}
Combining the examples above with the \'etale devissage results of
\cite{etale_dev_add}, one easily obtains the following.
\begin{proposition}\label{P:qfdiag_ab4starn}
  Let $X$ be an algebraic stack. Assume 
  \begin{enumerate}
  \item $X$ has quasi-finite and affine diagonal; or
  \item $X$ is noetherian with quasi-finite and separated diagonal; or 
  \item $X$ is a noetherian Deligne--Mumford $\Q$-stack.
  \end{enumerate}
  Then $\QCOH(X)$ satisfies \emph{AB}$4^*n$ for some $n>0$. 
\end{proposition}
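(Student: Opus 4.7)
The plan is to combine the descent results of Examples \ref{E:ff_desc}, \ref{E:open}, and \ref{E:mv_desc} with the étale dévissage of Rydh \cite{etale_dev_add} in order to reduce each of the three cases to an almost trivial base situation, namely $\QCOH$ of an affine scheme, possibly equipped with a finite étale group action.

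First I would proceed by noetherian induction on $|X|$ in cases (2) and (3); for case (1) I would first reduce to the noetherian situation by standard finite-presentation limit arguments (all the hypotheses in sight being of finite presentation in nature), and then run the same induction. The inductive step uses Example \ref{E:mv_desc}: given a quasi-compact open $j\colon U \hookrightarrow X$ with finitely presented closed complement $Z \hookrightarrow X$, together with a representable étale neighborhood $f\colon X' \to X$ of $Z$, the AB$4^*n$ property on $\QCOH(U)$ and on $\QCOH(X')$ together imply the same property on $\QCOH(X)$. The étale dévissage of \cite{etale_dev_add} applied to a stack with quasi-finite diagonal produces precisely such flat Mayer--Vietoris squares, in which the étale neighborhood $X'$ is strictly simpler than $X$: in cases (1) and (2) the strata are eventually covered by affine schemes via a representable finite, finitely presented, surjective morphism, while in case (3) they are covered by stacks of the form $[\spec A/G]$ with $G$ a finite étale group scheme. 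Example \ref{E:ff_desc}\itemref{EI:ff_desc:finite} then transfers AB$4^*n$ from the cover up to the étale neighborhood, and AB$4^*0$ on an affine scheme is immediate because products of quasi-coherent sheaves on an affine scheme are exact. Example \ref{E:open} moreover gives us the restriction to $U$ for free once $X$ is handled, so the induction closes.

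The main obstacle is ensuring that the standing hypotheses of Example \ref{E:mv_desc}, namely that every stack appearing in the Mayer--Vietoris square is quasi-compact with affine diagonal or noetherian and affine-pointed, are preserved at every step of the dévissage. In case (1) this follows from the stability of affine diagonal under open immersions and under étale base change. In cases (2) and (3), the diagonal is quasi-finite and separated, hence quasi-affine by Zariski's main theorem; such stacks are affine-pointed by \cite[Lem.~4.7]{hallj_dary_coherent_tannakian_duality}, and noetherianness is obviously stable throughout the dévissage. A secondary technical point, which I expect to be routine but worth care, is translating the statements of the dévissage of \cite{etale_dev_add}, which are stated on the geometric side, into the precise form of flat Mayer--Vietoris squares demanded by Example \ref{E:mv_desc}.
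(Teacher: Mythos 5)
Your proposal matches the paper's argument: the paper derives this proposition precisely by combining Examples \ref{E:ff_desc}, \ref{E:open}, and \ref{E:mv_desc} with Rydh's \'etale d\'evissage, with affine schemes (where quasi-coherent products are exact) as the base case, and offers no further detail. One small caution: in case (1) the preliminary reduction to the noetherian case is both unnecessary (Rydh's d\'evissage applies directly to quasi-compact, quasi-separated stacks with quasi-finite and separated diagonal, with no noetherian hypothesis) and not actually covered by Example \ref{E:ff_desc}\itemref{EI:ff_desc:affine}, since the affine transition morphisms produced by noetherian approximation are not flat.
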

We conclude this section with a derived variant of Example
\ref{E:adjoints-prod}.
\begin{example}\label{E:adjoints-prod-refined}
  Let $\mathcal{A}$ be a Grothendieck abelian and let
  $\mathcal{M} \subseteq \mathcal{A}$ be a weak Serre subcategory that
  is closed under coproducts
  \cite[App.~A]{hallj_neeman_dary_no_compacts}. Consider the functor
  \[
    \Psi \colon \DCAT(\mathcal{M}) \to \DCAT_{\mathcal{M}}(\mathcal{A}).
  \]
  Assume:
  \begin{enumerate}
  \item $\mathcal{M}$ is Grothendieck abelian; and
  \item
    $\Psi^+ \colon \DCAT^+(\mathcal{M}) \to
    \DCAT^+_{\mathcal{M}}(\mathcal{A})$ is an equivalence.
  \end{enumerate}
  Then $\DCAT_{\mathcal{M}}(\mathcal{A})$ is well-generated
  \cite[Thm.~A.3]{hallj_neeman_dary_no_compacts}, so admits small
  products. Also, $\Psi$ preserves coproducts, so the functor
  $\Psi \colon \DCAT(\mathcal{M}) \to
  \DCAT_{\mathcal{M}}(\mathcal{A})$ admits a right adjoint
  $\Phi \colon \DCAT_{\mathcal{M}}(\mathcal{A}) \to
  \DCAT(\mathcal{M})$ \cite[Prop.~1.20]{MR1812507}. In particular,
  $\Phi$ preserves products. These conditions are satisfied for the
  inclusion $\QCOH(X) \subseteq \MOD(X)$, where $X$ is an algebraic
  stack that is quasi-compact with affine diagonal or is noetherian and
  affine-pointed \cite[Thm.~C.1]{hallj_neeman_dary_no_compacts}.

  Now since $\Phi$ is right adjoint to a (right) $t$-exact functor,
  then $\Phi$ is left $t$-exact. In particular, $\Phi$ restricts to a
  functor
  $\Phi^+ \colon \DCAT^+_{\mathcal{M}}(\mathcal{A}) \to
  \DCAT^+(\mathcal{M})$. Let $M \in \DCAT(\mathcal{M})$ and form the
  distinguished triangle:
  \[
    \xymatrix{M \ar[r] & \Phi \Psi(M) \ar[r] & C(M) \ar[r] & M[1].}
  \]
  Now assume that $M$ is bounded below. Then so too is $C(M)$ and applying
  $\RHom_{\mathcal{M}}(C(M),-)$ to this triangle produces the triangle
  \[
    \xymatrix@C-1.5pc{\RHom_{\mathcal{M}}(C(M),M) \ar[r] & \RHom_{\mathcal{M}}(C(M),\Phi\Psi(M)) \ar[r] & \RHom_{\mathcal{M}}(C(M),C(M)) \ar[r] & \RHom_{\mathcal{M}}(C(M),M)[1].}
  \]
  By full faithfulness of $\Psi^+$, the composition
  \[
    \RHom_{\mathcal{M}}(C(M),M) \to \RHom_{\mathcal{M}}(C(M),\Phi\Psi(M)) \simeq \RHom_{\DCAT_{\mathcal{M}}(\mathcal{A})}(\Psi(C(M)),\Psi(M))
  \]
  is a quasi-isomorphism. Hence, $C(M) = 0$ and $M \to \Phi\Psi(M)$ is
  a quasi-isomorphism for all $M\in \DCAT^+(\mathcal{M})$. To show
  that $\Psi$ and $\Phi$ restrict to adjoint equivalences on
  $\DCAT^+$, it remains to prove that
  $N \in \DCAT^+_{\mathcal{M}}(\mathcal{A})$ implies that
  $\Phi(N) \neq 0$. But this is clear: $N \simeq \Psi(M)$ for some
  $M\in \DCAT^+(\mathcal{M})$ and so
  $0 \simeq \Phi(N) \simeq \Phi\Psi(M) \simeq M$. Hence,
  $N \simeq \Phi(M) \simeq 0$.

  Let $\Lambda$ be a
  set. If $\mathcal{M}$ satisfies AB$4^*n(\Lambda)$ for some $n$, then
  for $-\infty < a\leq b < \infty$ and a subset
  $\{N_\lambda\}_{\lambda \in \Lambda} \subseteq
  \DCAT^{[a,b]}_{\mathcal{M}}(\mathcal{A})$, we have that
  $\prod_{\lambda \in \Lambda}^{\DCAT_{\mathcal{M}}(\mathcal{A})}
  N_\lambda \in \DCAT^{[a,b+n]}_{\mathcal{M}}(\mathcal{A})$.  Indeed,
  $\prod_{\lambda \in \Lambda}^{\DCAT_{\mathcal{M}}(\mathcal{A})}
  N_\lambda\in \DCAT^{+}_{\mathcal{M}}(\mathcal{A})$, so
  \begin{align*}
    \prod_{\lambda \in \Lambda}^{\DCAT_{\mathcal{M}}(\mathcal{A})}
    N_\lambda \simeq  \Psi\Phi\left( \prod_{\lambda \in \Lambda}^{\DCAT_{\mathcal{M}}(\mathcal{A})}
                N_\lambda\right)
              \simeq \Psi\left( \prod_{\lambda \in \Lambda}^{\DCAT({\mathcal{M}})}
                \Phi(N_\lambda)\right)
  \end{align*}
  But $\Phi$ is $t$-exact on $\DCAT^+_{\mathcal{M}}(\mathcal{A})$, so
  $\Phi(N_\lambda) \in \DCAT^{[a,b]}(\mathcal{M})$.  The claim follows
  from Example \ref{E:groth_ab_prod}. Conversely, if
  $\{ M_\lambda\}_{\lambda\in \Lambda} \subseteq \mathcal{M}$ and
  $\trunc{>n}(\prod^{\DCAT_{\mathcal{M}}(\mathcal{A})}_{\lambda\in
    \Lambda} \Psi(M_\lambda[0])) \simeq 0$, then
  $\prod^{(s)}_{\lambda\in \Lambda} M_\lambda = 0$ for all $s>n$ in
  $\mathcal{M}$. In particular, if this is true for all families
  $\{M_\lambda\}_{\lambda\in \Lambda}$, then $\mathcal{M}$ satisfies
  AB$4^*n(\Lambda)$. Indeed,
  \[
    \prod_{\lambda\in \Lambda}^{\DCAT(\mathcal{M})} M_\lambda[0]
    \simeq \prod_{\lambda\in \Lambda}^{\DCAT(\mathcal{M})}
    \Phi\Psi(M_\lambda[0]) \simeq
    \Phi\left(\prod_{\lambda\in
        \Lambda}^{\DCAT_{\mathcal{M}}(\mathcal{A})}\Psi(M_\lambda[0])\right).
  \]
  But
  $\prod_{\lambda\in \Lambda}^{\DCAT_{\mathcal{M}}(\mathcal{A})}\Psi(M_\lambda[0])
  \in \DCAT^{\geq 0}_{\mathcal{M}}(\mathcal{A})$, so $\Phi$ is $t$-exact. The
  claim follows.
\end{example}

\section{Left completeness of $t$-structures on a triangulated category}
Let $\mathcal{T}$ be a triangulated category. Consider an inverse system of objects in $\mathcal{T}$:
\[
\cdots \xrightarrow{\theta_{r+2}}	t_{r+1} \xrightarrow{\theta_{r+1}} t_r \xrightarrow{\theta_r} t_{r-1} \xrightarrow{\theta_{r-1}} \cdots.
\]	
If $\mathcal{T}$ has countable products, then the \emph{homotopy
  limit} of this system is the (unique, up to non-unique isomorphism)
object that fits into the distinguished triangle:
\[
\xymatrix{ \holim{r} t_r \ar[r] & \prod_r t_r \ar[r]^{1-[\mathrm{shift}]} & \prod_r t_r \ar[r] & \holim{r} t_r[1].}
\]
\begin{example}\label{E:ab4nhtyp}
  If $\mathcal{A}$ is a Grothendieck abelian category. Consider an
  inverse system of objects $\{t_r\}$ in $\DCAT(\mathcal{A})$ as
  above. If $\mathcal{A}$ satisfies AB$4^*n(\omega)$ for some $n$,
  then for each $p\in \Z$ we have
  \[
    \trunc{\geq p}\holim{r} t_r \simeq \trunc{\geq p}\holim{r}\trunc{\geq p-n-1}t_r
  \]
  This is immediate from \eqref{eq:products} and the defining triangle
  for $\holim{r}$
\end{example}
For each $x\in \mathcal{T}$ and $p \in \Z$, applying $\RHom(x,-)$ to
this triangle results in the Milnor exact sequence \cite[Tag
\spref{0919}]{stacks-project}:
\begin{equation}
  \xymatrix@C-0.8pc{0 \ar[r] & \varprojlim^1_r \Ext^{p-1}(x,t_r) \ar[r] & \Ext^p(x,\holim{r} t_r) \ar[r] & \varprojlim_r \Ext^p(x,t_r) \ar[r] & 0.}\label{eq:milnor}
\end{equation}
One easily deduces from this that the homotopy limit of a constant system associated to $t \in \mathcal{T}$ (i.e., $t_r=t$ and $\theta_r = \ID{}$ for all $r$) is just $t$ and that the homotopy limit of a general system only depends upon its tail.
 
A very nice and useful consequence of this sequence is the following well-known example.
\begin{example}\label{ex:milnor_ab}
  Let $\mathcal{T}=\DCAT(A)$, where $A$ is a ring. Then applying
  \eqref{eq:milnor} with $x=A$ we obtain an exact sequence of
  $A$-modules for all $p\in \Z$:
  \[
   \xymatrix{0 \ar[r] & \varprojlim^1_r \COHO{p-1}(t_r) \ar[r] & \COHO{p}(\holim{r} t_r) \ar[r] & \varprojlim_r \COHO{p}(t_r) \ar[r] & 0.}
  \]
  It follows that for all $p\in \Z$:
  \[
    \trunc{>p}\holim{r}t_r \simeq \trunc{>p}\holim{r}(\trunc{>p-1}t_r).
  \]
  This also follows from Example \ref{E:ab4nhtyp}. Moreover, if
  $\trunc{>p}\holim{r}t_r \simeq 0$, then
  $\holim{r}\trunc{>p}t_r \simeq 0$. Indeed, the above shows that
  $\trunc{>p+1}\holim{r}(\trunc{>p}t_r) \simeq 0$ and
  $\trunc{<p+1}\holim{r}(\trunc{>p}t_r) \simeq 0$. It remains to check
  what happens in degree $p+1$. But now we have a commutative diagram
  with exact rows:
  \[
    \xymatrix{0 \ar[r] & \varprojlim^1_r \COHO{p}(t_r) \ar[d] \ar[r] & \ar[d] \COHO{p+1}(\holim{r} t_r) \ar[r] & \varprojlim_r \COHO{p+1}(t_r) \ar[r] \ar[d] & 0\\
      0 \ar[r] & \varprojlim^1_r \COHO{p}(\trunc{>p}t_r) \ar[r] &
      \COHO{p+1}(\holim{r} \trunc{>p}t_r) \ar[r] & \varprojlim_r \COHO{p+1}(\trunc{>p}t_r)
      \ar[r] & 0.}
  \]
  Now the right vertical arrow is an isomorphism and bottom left entry
  is zero. By the Snake Lemma, the middle vertical map is
  surjective. But the middle entry in the top row is zero and the
  claim follows.
\end{example}

Now assume that $\mathcal{T}$ has a $t$-structure. We say that
$\mathcal{T}$ is:
\begin{itemize}
\item \emph{left separated} if $t\in \mathcal{T}$ satisfies
  $\trunc{>-r}t \simeq 0$ for all $r\in \Z$, then $t\simeq 0$;
\item \emph{left faithful} if for each
$t\in \mathcal{T}$, any induced morphism:
\[
t \to \holim{r} \trunc{\geq -r}t
\]
is an isomorphism (note that we only need to check it for one such
morphism);
\item \emph{left complete} if it is left separated and given an inverse system
  \[
    \cdots \to t_{r+1} \xrightarrow{\theta_{r+1}} t_r \to \cdots
  \]
  in $\mathcal{T}$, where $\trunc{>-r}t_{r+1} \simeq t_r$ for all
  $r\in \Z$ (i.e., a Postnikov pretower), there exists
  $t\in \mathcal{T}$ and compatible maps $\psi_r \colon t \to t_r$
  such that $\trunc{>-r}t \simeq t_r$ for all $r\in \Z$.
\end{itemize}
We now make some useful remarks.
\begin{remark}
  Certainly, left faithful implies left separated. A brief argument
  also shows that left complete implies left faithful. 
\end{remark}
\begin{remark}
  In the triangulated category literature, what we call
  left faithful is sometimes called left complete.
\end{remark}
\begin{remark}
  If $\mathcal{C}$ is a stable $\infty$-category with a $t$-structure,
  then there is also a notion of its \emph{left completion}
  \cite[Prop.\ 1.2.1.17]{lurie_highalg}. This produces another stable
  $\infty$-category $\hat{\mathcal{C}}$ with a $t$-structure, together
  with a $t$-exact functor
  $f \colon \mathcal{C} \to \hat{\mathcal{C}}$, which induces an
  equivalence
  $\mathcal{C}^{\geq 0} \simeq \hat{\mathcal{C}}^{\geq
    0}$. Explicitly, $\hat{\mathcal{C}}=\lim_n \mathcal{C}^{\geq
    -n}$. This can be viewed as the subcategory of
  $\mathsf{Fun}(\mathrm{N}(\Z),\mathcal{C})$ with objects those
  functors $F \colon N(\Z) \to \mathcal{C}$ such that
  $F(n) \in \mathcal{C}^{\geq n}$ and if $m\leq n$ in $\Z$, then the
  induced map $F(m) \to F(n)$ induces an equivalence
  $\trunc{\geq n}F(m) \simeq F(n)$. Lurie then defines $\mathcal{C}$
  to be \emph{left complete} if $f$ is an equivalence. If
  $\mathcal{C}$ admits countable products, then the functor
  $f \colon \mathcal{C} \to \hat{\mathcal{C}}$ admits a right adjoint
  $g \colon \hat{\mathcal{C}} \to \mathcal{C}$. Explicitly,
  $g(F) = \holim{n} F(n)$. Note that if $c\in \mathcal{C}$, then
  $gf(c) = \holim{n} \trunc{\geq n}c$. Similarly, if
  $F \in \hat{\mathcal{C}}$, then $fg(F)$ is the functor
  $n \mapsto \trunc{\geq n}(\holim{s} F(s))$.  Then $f$ is fully
  faithful (resp.~an equivalence) if and only if the homotopy category
  $\mathrm{h}(\mathcal{C})$, which is triangulated, is left faithful
  (resp.~left complete).  To see this, we note that $f$ is fully
  faithful if and only if the adjunction
  $c \to gf(c)=\holim{n} \trunc{\geq n}c$ is an equivalence for all
  $c\in \mathcal{C}$. Similarly, if $f$ is fully faithful, then it is
  an equivalence if and only if $g$ is conservative. Now it suffices
  to check these equivalences on the homotopy category. Observe that
  the functor $\pi \colon \mathcal{C} \to \mathrm{h}(\mathcal{C})$
  preserves holim.\footnote{It suffices to show that $\pi$ preserves
    countable products. Let $\{y_n\}_{n\in \Z}$ belong to
    $\mathcal{C}$. By assumption, there is a $y\in \mathcal{C}$ that
    represents the functor
    $x\mapsto \prod_{n}\Hom^\ast_{\mathcal{C}}(x,y_n)$ from
    $\mathcal{C}^\opp \to \DCAT(\AB)$. It suffices to prove that
    $\pi(y)$ represents the cohomological functor
    $z\mapsto \prod_n \Hom_{\mathrm{h}(\mathcal{C})}(z,\pi(y_n))$ from
    $\mathrm{h}(\mathcal{C})^\opp \to \AB$. This is clear: the
    relationship here is just taking $\COHO{0}(-)$, which preserves
    products.} In particular, the full faithfulness claim is now
  clear. If $\mathrm{h}(\mathcal{C})$ is left complete, let
  $F \in \hat{\mathcal{C}}$ be such that $g(F) = 0$. Let
  $c_n = \pi(F(n))$; then $0=\pi(g(F)) = \holim{n} c_n$. By left
  completeness, $c_n =0$ for all $n$. The converse is clear.
\end{remark}
\begin{remark}
If $t \in \mathcal{T}$ and $\trunc{<s}t \simeq 0$ for some $s\in \Z$, then any induced morphism
\[
	t \to \holim{r} \trunc{\geq -r}t
\]
is an isomorphism. Indeed, as the homotopy limit of an inverse system only depends upon the tail, we may replace $\{\trunc{\geq -r}t\}_r$ with the constant system $\{ \trunc{\geq s} t\}_r$. The claim follows.
\end{remark}
\begin{lemma}\label{L:left-complete}
  Let $\mathcal{A}$ be a Grothendieck abelian category.
  \begin{enumerate}
  \item \label{LI:left-complete:sep} $\DCAT(\mathcal{A})$ is
    left separated with respect to the standard $t$-structure.
  \item\label{LI:left-complete:complete} If $\mathcal{A}$ satisfies
    \textup{AB}$4^*n(\omega)$, then $\DCAT(\mathcal{A})$ is
    left complete with respect to the standard {$t$-structure}.
\end{enumerate}
\end{lemma}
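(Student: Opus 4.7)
For \itemref{LI:left-complete:sep}, I would argue directly from the definition of the standard $t$-structure. If $t \in \DCAT(\mathcal{A})$ satisfies $\trunc{>-r}t \simeq 0$ for every $r \in \Z$, then $\COHO{i}(t) = 0$ whenever $i > -r$; letting $r \to \infty$, this forces $\COHO{i}(t) = 0$ for every $i \in \Z$, hence $t \simeq 0$ in $\DCAT(\mathcal{A})$.

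For \itemref{LI:left-complete:complete}, given a Postnikov pretower $\cdots \to t_{r+1} \xrightarrow{\theta_{r+1}} t_r \to \cdots$ in $\DCAT(\mathcal{A})$, the plan is to take $t := \holim{r} t_r$, which exists since $\DCAT(\mathcal{A})$ has countable products (Example \ref{E:groth_ab_prod}), and take $\psi_s \colon t \to t_s$ to be the canonical projections. The main task is then to verify that $\trunc{>-s}t \simeq t_s$ for each $s$. Applying Example \ref{E:ab4nhtyp} with $p = -s+1$ gives
\[
  \trunc{>-s}t = \trunc{\geq -s+1}\holim{r}t_r \simeq \trunc{\geq -s+1}\holim{r}\trunc{\geq -s-n}t_r = \trunc{>-s}\holim{r}\trunc{>-s-n-1}t_r.
\]
The Postnikov relation $\trunc{>-r}t_{r+1} \simeq t_r$ shows that for any fixed $p$ the tower $\{\trunc{>p}t_r\}_r$ is eventually constant: once $r \geq -p$, the transition map $\theta_{r+1}$ has fiber concentrated in degrees $\leq -r \leq p$, so $\trunc{>p}\theta_{r+1}$ is an isomorphism. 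With $p = -s-n-1$, the stable value is $u := \trunc{>-s-n-1}t_{s+n+1}$, and the Milnor sequence \eqref{eq:milnor} (with $\varprojlim = u$, $\varprojlim^1 = 0$) yields $\holim{r}\trunc{>-s-n-1}t_r \simeq u$.

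It remains to apply $\trunc{>-s}$ to $u$ and iterate the Postnikov relation: since $n \geq 0$,
\[
  \trunc{>-s}u = \trunc{>-s}t_{s+n+1} = \trunc{>-s}\trunc{>-(s+n)}t_{s+n+1} = \trunc{>-s}t_{s+n} = \cdots = \trunc{>-s}t_{s+1} = t_s.
\]
Combined with \itemref{LI:left-complete:sep}, this yields left completeness. The main technical obstacle is the careful bookkeeping between the two truncation conventions ($\trunc{\geq p}$ in Example \ref{E:ab4nhtyp} versus $\trunc{>-r}$ in the definition) and identifying the offset $n+1$ between the level of truncation used on the tower and the level at which the conclusion is extracted; this offset is exactly why the AB$4^*n(\omega)$ hypothesis is needed.
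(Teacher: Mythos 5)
Your proposal is correct and follows essentially the same route as the paper: take $t=\holim{r}t_r$, use Example \ref{E:ab4nhtyp} to replace the tower by its $\trunc{>-s-n-1}$-truncations, observe that this truncated tower is eventually constant so its homotopy limit is the stable value, and then peel off the Postnikov relations to identify $\trunc{>-s}t$ with $t_s$. Your bookkeeping of the truncation conventions and the $n+1$ offset is in fact slightly more careful than the paper's own (rather terse) display.
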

\begin{proof}
  Claim \itemref{LI:left-complete:sep} is trivial. 
  For claim
  \itemref{LI:left-complete:complete}: by
  \itemref{LI:left-complete:sep}, we know $\DCAT(\mathcal{A})$ is
  left separated. Now consider an inverse system in
  $\DCAT(\mathcal{A})$
  \[
    \cdots \to A_{r+1} \xrightarrow{\theta_{r+1}} A_r \to \cdots,
  \]
  with $\trunc{>-r}A_{r+1} \simeq A_r$. Let $\hat{A}=\holim{r}
  A_r$. Then Example \ref{E:ab4nhtyp} gives isomorphisms:
  \[
  \trunc{\geq p}\widehat{A} \simeq \trunc{\geq p}\left(\holim{r} A_r\right) \simeq \trunc{\geq p }\left(\holim{r}\trunc{\geq p-n-1}A_r\right) \simeq \trunc{\geq  p}\trunc{\geq p-n-1}A_{p-n-1} \simeq A_{p}.
  \]	
  The result follows.
\end{proof}
We now have the following theorem.
\begin{theorem}\label{T:gab-left-complete}
  Let $\mathcal{A}$ be a Grothendieck abelian category and let
  $\mathcal{M} \subseteq \mathcal{A}$ be a weak Serre subcategory that is
  closed under coproducts. Consider the functor
  \[
    \Psi \colon \DCAT(\mathcal{M}) \to \DCAT_{\mathcal{M}}(\mathcal{A}).
  \]
  Assume that
  \begin{enumerate}
  \item $\mathcal{M}$ is Grothendieck abelian;
  \item
    $\Psi^+ \colon \DCAT^+(\mathcal{M}) \to
    \DCAT^+_{\mathcal{M}}(\mathcal{A})$ is an equivalence; and
  \item $\DCAT_{\mathcal{M}}(\mathcal{A})$ is left faithful.
  \end{enumerate}
  Then the following assertions hold.
  \begin{enumerate}[label=(\alph*)]
  \item \label{TI:gab-left-complete:ff} If $\DCAT(\mathcal{M})$ is left faithful, then
    $\Psi$ is fully faithful.
  \item \label{TI:gab-left-complete:eq} If $\mathcal{M}$ satisfies
    \textup{AB}$4^*(\omega)n$ for some $n>0$, then $\Psi$ is an
    equivalence and $\DCAT_{\mathcal{M}}(\mathcal{A})$ is
    left complete with respect to the standard $t$-structure.
  \end{enumerate}
\end{theorem}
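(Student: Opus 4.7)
The strategy is to exploit the adjunction $\Psi \dashv \Phi$ from Example \ref{E:adjoints-prod-refined}, together with the $t$-exactness of $\Psi$ (which holds because $\mathcal{M} \hookrightarrow \mathcal{A}$ is an exact inclusion of abelian categories) and the fact that $\Phi$, as a right adjoint, preserves all small limits, in particular homotopy limits.

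For part (a), I would show the unit $\eta_M \colon M \to \Phi\Psi(M)$ is an isomorphism for every $M \in \DCAT(\mathcal{M})$. Left faithfulness of $\DCAT_{\mathcal{M}}(\mathcal{A})$ combined with $t$-exactness of $\Psi$ gives an isomorphism
\[
    \Psi(M) \simeq \holim{r} \trunc{\geq -r}\Psi(M) \simeq \holim{r}\Psi(\trunc{\geq -r}M).
\]
Applying $\Phi$, using that it preserves homotopy limits and restricts to a quasi-inverse of $\Psi^+$ on $\DCAT^+$, I obtain
\[
    \Phi\Psi(M) \simeq \holim{r}\Phi\Psi(\trunc{\geq -r}M) \simeq \holim{r}\trunc{\geq -r}M,
\]
and the left faithful hypothesis on $\DCAT(\mathcal{M})$ identifies the right-hand side with $M$. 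A short diagram chase, using naturality of $\eta$ together with these isomorphisms (both vertical sides of the square comparing $M$ and $\Phi\Psi(M)$ with their homotopy limits become isomorphisms, and the bottom row is iso termwise by the bounded-below equivalence), then shows that $\eta_M$ itself is an isomorphism, establishing full faithfulness of $\Psi$.

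For part (b), the AB$4^*n(\omega)$ hypothesis on $\mathcal{M}$ forces $\DCAT(\mathcal{M})$ to be left complete by Lemma \ref{L:left-complete}, hence in particular left faithful, and part (a) yields $\Psi$ fully faithful. For essential surjectivity I would show the counit $\Psi\Phi(N) \to N$ is an isomorphism for each $N \in \DCAT_{\mathcal{M}}(\mathcal{A})$. The key computation is to commute $\Phi$ past truncations. Since $\Phi$ preserves homotopy limits, $\Phi(N) \simeq \holim{s}\Phi(\trunc{\geq -s}N)$, and since $\Phi$ is $t$-exact on bounded below objects, the terms $\Phi(\trunc{\geq -s}N)$ form a Postnikov pretower in $\DCAT(\mathcal{M})$. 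Left completeness of $\DCAT(\mathcal{M})$ then gives $\trunc{\geq -r}\Phi(N) \simeq \Phi(\trunc{\geq -r}N)$. Applying $\Psi$, using its $t$-exactness and $\Psi\Phi \simeq \mathrm{id}$ on bounded below, yields $\trunc{\geq -r}\Psi\Phi(N) \simeq \trunc{\geq -r}N$ for all $r$; left faithfulness of $\DCAT_{\mathcal{M}}(\mathcal{A})$ upgrades this to the desired isomorphism $\Psi\Phi(N) \simeq N$. Finally, left completeness of $\DCAT_{\mathcal{M}}(\mathcal{A})$ is inherited from that of $\DCAT(\mathcal{M})$ by transporting Postnikov pretowers across the $t$-exact equivalence $\Psi$.

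The main obstacle is the unbounded commutation $\trunc{\geq -r}\Phi(N) \simeq \Phi(\trunc{\geq -r}N)$ in part (b); this is precisely where the AB$4^*n(\omega)$ assumption on $\mathcal{M}$ is essential, since it is needed to upgrade the abstract left faithfulness of $\DCAT(\mathcal{M})$ (which suffices for part (a)) to full left completeness, guaranteeing that the Postnikov pretower $\{\Phi(\trunc{\geq -s}N)\}_s$ assembles into a homotopy limit with the expected truncations. Without AB$4^*n(\omega)$, the counit-is-iso argument collapses.
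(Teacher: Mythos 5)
Your argument coincides with the paper's: part (a) is essentially verbatim the same (the unit $M \to \Phi\Psi(M)$ is an isomorphism because $\Phi$ preserves homotopy limits, $\Psi$ is $t$-exact, and both categories are left faithful), and part (b) rests on the same key computation, namely commuting $\Phi$ past truncations using the AB$4^*n(\omega)$ hypothesis through Lemma \ref{L:left-complete} and Example \ref{E:ab4nhtyp}. The only cosmetic difference is in the endgame: you verify directly that the counit $\Psi\Phi(N) \to N$ is an isomorphism, whereas the paper shows that $\Phi$ is conservative (if $\Phi(A)=0$ then $\Phi(\trunc{\geq p}A)=0$ for all $p$, hence $A=0$ by left separatedness); given full faithfulness these are interchangeable.
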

\begin{proof}
  In Example \ref{E:adjoints-prod-refined}, we have established that
  $\Psi$ admits a right adjoint $\Phi$ and that both $\Psi$ and $\Phi$
  restrict to an equivalence on $\DCAT^+$. We first establish
  \ref{TI:gab-left-complete:ff}. It suffices to prove that if
  $M\in \DCAT(\mathcal{M})$, then the naturally induced morphism
  $M \to \Phi \Psi(M)$ is an isomorphism. Now
  $\DCAT_{\mathcal{M}}(\mathcal{A})$ is left faithful, so
  $\Psi(M) \simeq \holim{r} \trunc{\geq -r}\Psi(M)$. But $\Phi$
  preserves homotopy limits (it is a right adjoint) and $\Psi$ is
  $t$-exact (it is the derived functor of the exact inclusion
  $\mathcal{M} \subseteq \mathcal{A}$); hence,
  \[
    \Phi\Psi(M) \simeq \holim{r} \Phi\Psi(\trunc{\geq -r}M) \simeq
    \holim{n} \trunc{\geq -r}M.
  \]
  But $\DCAT(\mathcal{M})$ is left faithful, so it follows that
  $M \to \Phi \Psi(M)$ is an isomorphism.

  We now prove \ref{TI:gab-left-complete:eq}. By Lemma
  \ref{L:left-complete}\itemref{LI:left-complete:complete},
  $\DCAT(\mathcal{M})$ is left complete and so left faithful. It
  follows from \ref{TI:gab-left-complete:ff} that $\Psi$ is fully
  faithful. Hence, it remains to prove that if
  $A \in \DCAT_{\mathcal{M}}(\mathcal{A})$ and $\Phi(A) =0$, then
  $A=0$. By assumption, $\DCAT_{\mathcal{M}}(\mathcal{A})$ is also
  left faithful, so we have an equivalence
  $A \simeq \holim{r} \trunc{\geq -r}A$ in
  $\DCAT_{\mathcal{M}}(\mathcal{A})$. Applying $\Phi$ to this
  equivalence, we obtain an equivalence
  $\Phi(A) \simeq \holim{r} \Phi(\trunc{\geq -r}A)$. Now $\Phi$ is
  $t$-exact on $\DCAT^+_{\mathcal{M}}(\mathcal{A})$. In particular, if
  $p\in \Z$, Example \ref{E:ab4nhtyp} gives
  \begin{align*}
    0&=\trunc{\geq p}\Phi(A) \simeq \trunc{\geq
       p}\holim{r}\Phi(\trunc{\geq -r}A) \simeq \trunc{\geq
       p}\holim{r}\trunc{\geq p-n-1}\Phi(\trunc{\geq -r}A)\\ 
     &\simeq \trunc{\geq
       p}\holim{r}\Phi(\trunc{\geq p-n-1}\trunc{\geq -r}A)\\
     &\simeq \trunc{\geq
       p}\holim{r}\Phi(\trunc{\geq p-n-1}A)\\
     &\simeq \trunc{\geq
       p}\Phi(\trunc{\geq p-n-1}A)\\
    &\simeq \Phi(\trunc{\geq p}A).
  \end{align*}
  It follows that $\trunc{\geq p}A\simeq 0$ for all $p\in \Z$. Since
  $\DCAT_{\mathcal{M}}(\mathcal{A})$ is left separated, $A = 0$.
\end{proof}
We immediately obtain the following corollary.
\begin{corollary}\label{C:equiv}
  Let $X$ be an algebraic stack. Assume
  \begin{enumerate}
  \item\label{CI:equiv:affdiag} $X$ is quasi-compact with affine diagonal; or
  \item\label{CI:equiv:affptd} $X$ is noetherian and affine-pointed.
  \end{enumerate}
  If $\QCOH(X)$ satisfies \textup{AB}$4^*n(\omega)$ for some $n$, then
  $\DCAT(\QCOH(X)) \to \DQCOH(X)$ is an equivalence.
\end{corollary}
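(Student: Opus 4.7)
The plan is to apply Theorem~\ref{T:gab-left-complete}\ref{TI:gab-left-complete:eq} to the inclusion $\mathcal{M} := \QCOH(X) \subseteq \MOD(X) =: \mathcal{A}$. The AB$4^*n(\omega)$ hypothesis on $\mathcal{M}$ is precisely what that part of the theorem demands, so the task reduces to verifying the three standing assumptions. Assumption (i), that $\QCOH(X)$ is Grothendieck abelian, is standard; assumption (ii), that $\Psi^+$ restricts to an equivalence on bounded-below derived categories, is \cite[Thm.~C.1]{hallj_neeman_dary_no_compacts} under either of the posited hypotheses on $X$. The only real work is assumption (iii): $\DQCOH(X) = \DCAT_{\mathcal{M}}(\mathcal{A})$ is left faithful.

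For (iii), my plan is to avoid any delicate analysis of products in $\DQCOH(X)$ and proceed via a generator argument. First I would establish a \emph{detection principle}: if $\Hom_{\DQCOH(X)}(M[p], t) = 0$ for every $M \in \QCOH(X)$ and $p \in \Z$, then $t \simeq 0$. Since $M[p]$ is concentrated in degree $-p$, the truncation triangle $\trunc{<-r}t \to t \to \trunc{\geq -r}t \to$ combined with the $t$-structure orthogonality $\Hom(M[p], \trunc{<-r}t) = 0$ (valid once $r \geq p$) identifies $\Hom(M[p], t)$ with $\Hom(M[p], \trunc{\geq -r}t)$ for $r$ large. By (ii), the bounded-below truncation $\trunc{\geq -r}t$ corresponds to an object $Y \in \DCAT^+(\QCOH(X))$, and there any non-zero cohomology sheaf $F := \COHO{q}(Y)$ (with $q \geq -r$) produces a non-zero morphism $F[-q] \to Y$: the bottom inclusion $F[-q] \hookrightarrow \trunc{\geq q}Y$ lifts along $Y \to \trunc{\geq q}Y$ because the obstruction $F[-q] \to \trunc{<q}Y[1]$ is a morphism from an object concentrated in degree $q$ into an object of cohomological amplitude $\leq q-2$, hence vanishes. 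Thus the hypothesis forces $\trunc{\geq -r}t = 0$ for every $r$, and left separation of $\DCAT(\mathcal{A})$ (Lemma~\ref{L:left-complete}\ref{LI:left-complete:sep}) yields $t \simeq 0$.

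Given detection, (iii) follows formally from Milnor's exact sequence \eqref{eq:milnor} in $\DQCOH(X)$. For $t \in \DQCOH(X)$ set $T := \holim{r} \trunc{\geq -r}t$, computed using the products in $\DQCOH(X)$ furnished by Example~\ref{E:adjoints-prod-refined}. For each $M \in \QCOH(X)$ and $p \in \Z$ both inverse systems $\{\Hom(M[p], \trunc{\geq -r}t)\}_r$ and $\{\Hom(M[p+1], \trunc{\geq -r}t)\}_r$ are eventually constant (at $\Hom(M[p],t)$ and $\Hom(M[p+1],t)$) by the stabilization just proved, so the $\varprojlim^1$ term vanishes and the $\varprojlim$ equals $\Hom(M[p], t)$. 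Milnor then supplies an isomorphism $\Hom(M[p], t) \simeq \Hom(M[p], T)$ induced by the canonical map $t \to T$, and the detection principle forces $t \simeq T$. This completes the verification of (iii), after which Theorem~\ref{T:gab-left-complete}\ref{TI:gab-left-complete:eq} yields the desired equivalence; the principal obstacle is the detection principle, which transports non-vanishing of cohomology to non-vanishing Hom groups via the equivalence in (ii).
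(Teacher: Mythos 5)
Your reduction to Theorem \ref{T:gab-left-complete}\ref{TI:gab-left-complete:eq} applied to the inclusion $\QCOH(X)\subseteq \MOD(X)$, with hypotheses (i) and (ii) supplied by \cite[Thm.~C.1]{hallj_neeman_dary_no_compacts}, is exactly the route the paper takes; the corollary is recorded there as an immediate consequence of that theorem. The problem lies in your verification of hypothesis (iii). The orthogonality you invoke, $\Hom(M[p],\trunc{<-r}t)=0$ for $r\geq p$, is backwards. The $t$-structure axiom gives $\Hom(\DCAT^{\leq 0},\DCAT^{\geq 1})=0$, i.e.\ vanishing of maps \emph{into} sufficiently connective targets; here $M[p]$ sits in degree $-p$ while $\trunc{<-r}t$ sits in degrees $\leq -r-1<-p$, so the groups in question are built out of $\Ext^{j}_{\Orb_X}(M,\COHO{-q}(t))$ with $j=q-p>0$, and there is no reason for these to vanish unless $M$ has finite Ext-dimension. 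Already for $X=\spec k[x]/(x^2)$ and $M=k$ one has $\Ext^j(k,k)\neq 0$ for all $j\geq 0$. Consequently neither your detection principle (for unbounded $t$) nor the asserted eventual constancy of the inverse systems $\{\Hom(M[p],\trunc{\geq -r}t)\}_r$ is established. (The lifting step producing $F[-q]\to Y$ has the same issue for general $q$, though it is fine if you take $q$ to be the bottom non-vanishing degree of the bounded-below complex $Y$.)

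The stabilization claim is the more serious failure, because $\Hom(M[p],t)\simeq\Hom(M[p],\trunc{\geq -r}t)$ for $r\gg 0$ is essentially left faithfulness tested against the generators $M[p]$ --- it is the thing to be proved, not an input. The paper only establishes a statement of this shape under strong finiteness hypotheses: Theorem \ref{T:reg-conc}\itemref{TI:reg-conc:fd} requires $X$ to be locally regular of finite Krull dimension precisely so that $\SRHom_{\Orb_X}(M,\Orb_X)$ is a bounded perfect complex, and Example \ref{E:counter} shows that in positive characteristic such uniform bounds genuinely fail. None of these hypotheses are available in the generality of the corollary, and the AB$4^*n(\omega)$ assumption on $\QCOH(X)$, which you never use in your argument for (iii), cannot be fed into your truncation triangles in any evident way. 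As written, hypothesis (iii) of Theorem \ref{T:gab-left-complete} --- left faithfulness of $\DQCOH(X)$ with respect to its intrinsic products --- remains unverified, so the proof is incomplete.
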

Combining Corollary \ref{C:equiv} with Example \ref{E:ab4n-reg} gives
our second proof of Theorems \ref{MT:smooth} and \ref{T:reg-conc}.
\begin{remark}\label{R:hx}
  Hogadi--Xu \cite[Thm.~1.5]{2009arXiv0902.4016H} prove a result
  related to Theorem \ref{T:gab-left-complete}. The conclusion is
  similar and it has the additional assumption that $\mathcal{A}$ is
  also AB$4^*(\omega)n$. Our argument is a simple variant of
  theirs. They use this result to prove their Theorem 1.4 (our
  Corollary \ref{C:equiv}\itemref{CI:equiv:affdiag}). They do not
  appear to establish that $\MOD(X)$ satisfies AB$4^*n(\omega)$,
  however. While we were unable to find a counterexample, that
  $\MOD(X)$ satisfies AB$4^*n(\omega)$ in this generality appears to
  be non-obvious.
\end{remark}
\section{Direct proof of the Corollary in \S\ref{S:intro}}\label{S:direct-corollary}
Let $\pi \colon X \to Y$ be an affine and faithfully flat morphism of
algebraic stacks. We say that $\pi$ is a \emph{globally retracted
  cover} if $\Orb_Y \to \pi_*\Orb_X$ has a $\Orb_Y$-module
retraction. Derived variants of these were considered by Elagin
\cite{MR2830235}, where it was proved that globally retracted covers
allow one to do descent in the unbounded derived category. This
is also related to the work of Balmer on descent in triangulated
categories \cite{MR2910783}. The natural context to work in is that of
\emph{descendable} morphisms, which are discussed in the next
section. Since the Corollary in \S\ref{S:intro} is so simply stated and useful,
we thought it might be a good idea to provide a direct proof of it.

Globally retracted covers are not easy to find. We will locate some
interesting and useful ones, however. Our interest in them stems from the
following elementary result.
\begin{lemma}\label{L:splitting-cover-products}
  Let $\pi \colon X \to Y$ be a globally retracted cover. 
  Let $\Lambda$ be a set. If $\QCOH(X)$ satisfies
  \emph{AB}$4^*n(\Lambda)$ for some $n>0$, then so does $\QCOH(Y)$.
\end{lemma}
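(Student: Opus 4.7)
The plan is to reduce the AB$4^*n(\Lambda)$ condition on $\QCOH(Y)$ to the corresponding condition on $\QCOH(X)$ by writing every family $\{N_\lambda\}_{\lambda \in \Lambda}$ of quasi-coherent sheaves on $Y$ as a direct summand of $\{\pi_*\pi^*N_\lambda\}$, then using that the derived functors of products commute with the pushforward along the affine morphism $\pi$.

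First, since $\pi$ is affine, the adjoint pair $\pi^* \colon \QCOH(Y) \leftrightarrows \QCOH(X) \colon \pi_*$ consists of exact functors. Example \ref{E:adjoints-prod} therefore applies and yields, for any family $\{M_\lambda\}_{\lambda \in \Lambda}$ in $\QCOH(X)$ and any $p \geq 0$, a natural isomorphism
\[
  \pi_*\Bigl(\prod_{\lambda\in \Lambda}^{(p)} M_\lambda\Bigr) \simeq \prod_{\lambda \in \Lambda}^{(p)} \pi_* M_\lambda
\]
in $\QCOH(Y)$. Applying this with $M_\lambda = \pi^* N_\lambda$ and using the AB$4^*n(\Lambda)$ hypothesis on $\QCOH(X)$, we conclude that $\prod_{\lambda\in\Lambda}^{(p)} \pi_* \pi^* N_\lambda = 0$ for all $p > n$.

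Next, I would use the globally retracted hypothesis. Because $\pi$ is affine, the projection formula gives $\pi_*\pi^* N_\lambda \simeq N_\lambda \tensor_{\Orb_Y} \pi_*\Orb_X$, and the $\Orb_Y$-module retraction $\pi_*\Orb_X \to \Orb_Y$ of the unit map $\Orb_Y \to \pi_*\Orb_X$ yields (after tensoring with $N_\lambda$) an $\Orb_Y$-module retraction $\pi_*\pi^*N_\lambda \to N_\lambda$ of the natural unit $N_\lambda \to \pi_*\pi^* N_\lambda$. Thus each $N_\lambda$ is a direct summand of $\pi_*\pi^* N_\lambda$ in $\QCOH(Y)$, naturally in $\lambda$.

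Finally, since each $\prod^{(p)}_{\lambda\in \Lambda}$ is an additive functor on $\QCOH(Y)^{\Lambda}$, it preserves direct summands. Hence $\prod^{(p)}_{\lambda\in \Lambda} N_\lambda$ is a direct summand of $\prod^{(p)}_{\lambda\in \Lambda} \pi_*\pi^*N_\lambda$, which we have just shown to vanish for $p > n$. Therefore $\prod^{(p)}_{\lambda \in \Lambda} N_\lambda = 0$ for $p > n$, establishing AB$4^*n(\Lambda)$ for $\QCOH(Y)$. There is no real obstacle here; the only mild subtlety is the verification that the adjunction and projection formula combine to split $N_\lambda$ off of $\pi_*\pi^*N_\lambda$, but this is exactly what the retraction hypothesis is designed to produce.
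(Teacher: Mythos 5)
\noindent Your proof is correct and follows essentially the same route as the paper's: both use the retraction of $\Orb_Y \to \pi_*\Orb_X$ tensored with $N_\lambda$, the projection formula for the affine flat morphism $\pi$, and Example \ref{E:adjoints-prod} to identify $\prod^{(p)}_{\lambda}\pi_*\pi^*N_\lambda$ with $\pi_*(\prod^{(p)}_{\lambda}\pi^*N_\lambda)$. The only cosmetic difference is that you phrase the final step via additive functors preserving direct summands, while the paper records the splitting as an injection $\prod^{(s)}_{\lambda}N_\lambda \hookrightarrow \prod^{(s)}_{\lambda}\pi_*\pi^*N_\lambda$; the content is identical.
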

\begin{proof}
  Let $Q=\coker(\Orb_Y \to \pi_*\Orb_X)$. Then we have a split exact
  sequence:
  \[
    \xymatrix{0 \ar[r] & \Orb_Y \ar[r] & \pi_*\Orb_X \ar[r] & Q \ar[r] & 0.}
  \]
  Fix a retraction $r \colon \pi_*\Orb_X \to \Orb_Y$.  In particular,
  the sequence above remains exact after the application of \emph{any}
  additive functor to another abelian category. Hence, if
  $\{M_\lambda\}_{\lambda\in \Lambda}$ is a set of quasi-coherent
  sheaves on $Y$, then we obtain a set of split of exact sequences
  \[
    \xymatrix{0 \ar[r] & M_\lambda \ar[r] & \pi_*\Orb_X \tensor_{\Orb_Y} M_\lambda \ar[r] & Q \tensor_{\Orb_Y} M_\lambda \ar[r] & 0,}
  \]
  each of which is split by
  $r_\lambda \colon \pi_*\Orb_X \tensor_{\Orb_Y} M_\lambda \to
  M_\lambda$. This family of exact sequences corresponds to a split
  exact sequence in the product category $\QCOH(Y)^\Lambda$. Since
  $\pi$ is affine and flat, we also have a projection isomorphism
  $\pi_*\Orb_X \tensor_{\Orb_Y} M_\lambda \simeq \pi_*\pi^*M_\lambda$
  in $\QCOH(Y)$. It follows that for all $s>0$ that we have an
  injection in $\QCOH(Y)$:
  \[
    \prod^{(s)}_{\lambda\in \Lambda} M_\lambda \hookrightarrow
    \prod^{(s)}_{\lambda\in \Lambda} (\pi_*\Orb_X \tensor_{\Orb_Y}
    M_\lambda) \simeq \prod^{(s)}_{\lambda\in \Lambda}
    \pi_*\pi^*M_\lambda \simeq \pi_*\left(\prod^{(s)}_{\lambda\in
        \Lambda} \pi^*M_\lambda\right).
  \]
  The final equivalence follows from Example \ref{E:adjoints-prod},
  applied to the adjoint pair $(\pi^*,\pi_*)$. The result follows.
\end{proof}
We now go about finding useful examples.
\begin{example}\label{E:gln-torus-split}
  If $k$ is a field of characteristic $0$, then
  $\spec k \to B\GL_{s,k}$ is globally split $\forall s>0$.
\end{example}
The above example can be generalized to
linearly reductive group schemes.
\begin{example}\label{E:globally-split-functorial}
  Let $f \colon Y' \to Y$ be a morphism of algebraic stacks. If
  $\pi\colon X \to Y$ is a globally split cover, then
  $\pi' \colon X'=X\times_Y Y' \to Y'$ is a globally split
  cover. 
\end{example}

\begin{proof}[Proof of the Corollary in \S\ref{S:intro}]
  Since $G$ is an affine group scheme over $k$, it admits an embedding
  $G \hookrightarrow \GL_{s,k}$ for some $s>0$. Then
  $[U/G] \simeq [V/\GL_{s,k}]$, where
  $V \simeq U \times (\GL_{s,k}/G)$ with the diagonal action of
  $\GL_{s,k}$. Then $V$ is noetherian and Deligne--Mumford. It follows
  by combining Examples \ref{E:gln-torus-split} and
  \ref{E:globally-split-functorial} that
  $V \to [V/\GL_{s,k}]\simeq [U/G]$ is globally retracted. By
  Proposition \ref{P:qfdiag_ab4starn}, $V$ satisfies AB$4^*n$ for some
  $n>0$. By Lemma \ref{L:splitting-cover-products}, $[U/G]$ satisfies
  AB$4^*n$. By Corollary \ref{C:equiv} we have the result.
\end{proof}
\section{Descendable morphisms}\label{S:desc_bg}
Here we review the elegant discussion of \emph{descendable} morphisms,
due to \cite{MR3459022}, which appears in \cite[\S2 \&
\S4]{aoki2020quasiexcellence}. This is closely related to the work of
Balmer for separable algebras \cite{2013arXiv1309.1808B}.

Let $\mathcal{S}$ be a triangulated category. A collection of objects
of $\mathcal{S}$ is \emph{closed} if it is closed under finite
coproducts and direct summands. If $S$ is a collection of objects of
$\mathcal{S}$, let $\closure{S}$ denote its \emph{closure}
(i.e., the smallest closed collection of objects of $\mathcal{S}$ that
contains it).  If $S_1$, $S_2$ are closed collections of objects of
$\mathcal{S}$, define $S_1 \star S_2$ to be the closure of the
collection
\[
  \{ s \in \mathcal{S} \suchthat \mbox{$s$ sits in a distinguished
    triangle $s_1 \to s \to s_2 \to s_1[1]$, where $s_i \in S_i$}\}.
\]
Let $\mathcal{B}$ be an
abelian category and let $\shfcoho \colon \mathcal{S} \to \mathcal{B}$
be a homological functor. If $s\in \mathcal{T}$, define its
$\shfcoho$-amplitude, $\amp_{\shfcoho}(s)$, to be the smallest
connected interval of $\Z$ with $\shfcoho^i(s) = 0$ if
$i\notin \amp_{\shfcoho}(t)$.  

We have the following trivial lemma.
\begin{lemma}\label{L:amplitude-star}
  Let $\shfcoho \colon \mathcal{S} \to \mathcal{B}$ be a homological functor. Let $S_1$, $\dots$, $S_k$ be a collection of objects of
  $\mathcal{S}$. If $s\in \closure{S_1} \star \cdots \star  \closure{S_k}$, then
  \[
    \amp_{\shfcoho}(s) \subseteq \bigcup_{i=1}^k\bigcup_{y\in S_i} \amp_{\shfcoho}(y).
  \]
\end{lemma}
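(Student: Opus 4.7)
The plan is to proceed by induction on $k$, reducing everything to two elementary properties of the amplitude invariant that follow at once from $\shfcoho$ being a homological (hence additive) functor into an abelian category.

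First I would record two facts. \emph{Fact 1:} Additivity of $\shfcoho$ gives $\shfcoho^i(s_1 \oplus s_2) \cong \shfcoho^i(s_1) \oplus \shfcoho^i(s_2)$, and $\shfcoho^i$ preserves direct summands. Hence if $s$ is a direct summand of $y_1 \oplus \cdots \oplus y_m$, then $\amp_\shfcoho(s) \subseteq \bigcup_{j=1}^m \amp_\shfcoho(y_j)$. \emph{Fact 2:} For any distinguished triangle $s' \to s \to s'' \to s'[1]$, the long exact cohomology sequence $\cdots \to \shfcoho^i(s') \to \shfcoho^i(s) \to \shfcoho^i(s'') \to \cdots$ forces $\shfcoho^i(s) = 0$ whenever both $\shfcoho^i(s')$ and $\shfcoho^i(s'')$ vanish, so $\amp_\shfcoho(s) \subseteq \amp_\shfcoho(s') \cup \amp_\shfcoho(s'')$.

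For the base case $k = 1$, every $s \in \closure{S_1}$ is obtained from $S_1$ by finitely many operations of finite coproduct and passage to a direct summand, so iterated application of Fact 1 yields $\amp_\shfcoho(s) \subseteq \bigcup_{y \in S_1} \amp_\shfcoho(y)$. For the inductive step, write $\closure{S_1} \star \cdots \star \closure{S_k} = (\closure{S_1} \star \cdots \star \closure{S_{k-1}}) \star \closure{S_k}$. Any $s$ in this collection is, up to finite coproducts and direct summands, an object fitting in a distinguished triangle $s' \to s \to s'' \to s'[1]$ with $s' \in \closure{S_1} \star \cdots \star \closure{S_{k-1}}$ and $s'' \in \closure{S_k}$. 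The induction hypothesis bounds $\amp_\shfcoho(s')$ by $\bigcup_{i<k}\bigcup_{y\in S_i}\amp_\shfcoho(y)$, the base case bounds $\amp_\shfcoho(s'')$ by $\bigcup_{y\in S_k}\amp_\shfcoho(y)$, Fact 2 then handles the triangle, and Fact 1 propagates the bound through the outer closure.

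There is essentially no substantive obstacle here: the statement is pure bookkeeping about which cohomological degrees are forced to vanish, and the only mild care needed is to observe that the bound produced by Facts~1 and~2 controls the set of nonvanishing degrees of $s$, from which the containment of the smallest enclosing interval in the right-hand union is immediate.
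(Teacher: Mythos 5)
Your proof is correct and is exactly the standard argument the paper has in mind: the paper labels this a ``trivial lemma'' and supplies no proof, and your induction on $k$ via additivity of $\shfcoho$ on finite coproducts and summands plus the long exact sequence of a distinguished triangle is the intended justification. The only caveat, which is really an imprecision in the lemma's statement rather than in your argument, is that the two facts bound the \emph{support} $\{i : \shfcoho^i(s) \neq 0\}$ by the union on the right, and the smallest enclosing interval of that support is contained in the union only when the union is convex --- which it is in every application in the paper, so nothing is lost.
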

\begin{example}
  Let $\mathcal{A}$ be a Grothendieck abelian category. We have the homological functor
  $\COHO{} \colon \DCAT(\mathcal{A}) \to \mathcal{A}$ that sends $M$ to its $0$th
  cohomology group. The $\COHO{}$-amplitude, we just refer to as
  \emph{amplitude}.
\end{example}
Now suppose that $(\mathcal{S},\otimes)$ is a tensor
triangulated category (i.e., $\mathcal{S}$ is a triangulated category,
is symmetric monoidal, and the $\tensor$ is compatible with triangles
and shifts). Let $A$ be an $\mathcal{S}$-algebra. 
Then $A$ is \emph{descendable} if the smallest thick tensor ideal of
$\mathcal{S}$ containing $A$ is $\mathcal{S}$.

Form the distinguished triangle:
\[
\xymatrix{  K_{A} \ar[r]^{\phi_A} & 1_{\mathcal{S}} \ar[r]^{\eta_A} & A \ar[r]^{\delta_A} & K_A[1]}
\]
We say that $A$ is \emph{descendable of index $\leq d$} if
$\phi_A^{\otimes d} \colon K_A^{\otimes d} \to 1_{\mathcal{S}}$ is
$0$. For each non-negative integer $i$, form a distinguished triangle:
\begin{equation}
  \xymatrix{K_A^{\otimes i} \ar[r]^{\phi_A^{\otimes i}} &
    1_{\mathcal{S}} \ar[r]^{\eta_A^i} & Q_A^i \ar[r]^{\delta_A^i} &
    K_A^{\otimes i}[1].}\label{eq:Q-define}
\end{equation}
It follows that if $A$ is descendable of index $\leq d$, then
$1_{\mathcal{S}}$ is a summand of $Q^{d}_A$. We now have a morphism of triangles:
\[
  \xymatrix@C+1.5pc{K_A^{\otimes (i+1)} \ar[d]_-{\mathrm{id}_{K_A^{\otimes i}} \otimes \phi_A}\ar[r]^{\phi_A^{\otimes (i+1)}} &
    1_{\mathcal{S}} \ar@{=}[d]\ar[r]^{\eta_A^{i+1}} & Q_A^{i+1} \ar@{-->}[d]^{\exists \chi_A^{i}}\ar[r]^{\delta_A^{i+1}} &
    K_A^{\otimes (i+1)}[1] \ar[d]^-{\mathrm{id}_{K_A^{\otimes i}} \otimes \phi_A[1]}\\
    K_A^{\otimes i} \ar[r]^{\phi_A^{\otimes i}} & 1_{\mathcal{S}}
    \ar[r]^{\eta_A^i} & Q_A^i \ar[r]^{\delta_A^i} & K_A^{\otimes
      i}[1].}
\]
From the octahedral axiom \cite[Prop.~1.4.6]{MR1812507}, we now obtain the
following commutative diagram with distinguished rows and columns:
\[
  \xymatrix@C+1.5pc{ 
    0 \ar[r] \ar[d] & K^{\otimes i}_A \otimes A[-1]
    \ar[d] \ar@{=}[r]& K_A^{\otimes i}\ar[d] \otimes A[-1] \ar[r] & 0 \ar[d]\\
    1_{\mathcal{S}}[-1] \ar[r]^-{\eta_A^{i+1}[-1]} \ar@{=}[d] & Q_A^{i+1}[-1] \ar[r]^-{\delta_A^{i+1}[-1]}
    \ar[d]^-{\chi_A^i[-1]} & \ar[d] K_A^{\otimes (i+1)} \ar[r]^-{\phi_A^{\otimes
        (i+1)}} & 1_{\mathcal{S}} \ar@{=}[d]
    \\ 1_{\mathcal{S}}[-1] \ar[r]^-{\eta_A^i[-1]} \ar[d] &Q_A^i[-1] \ar[d] \ar[r]^-{\delta_A^{i}[-1]} & K_A^{\otimes i} \ar[r]^-{\phi_A^{\otimes i}}\ar[d] &
    1_{\mathcal{S}} \ar[d] \\ 0 \ar[r] & K^{\otimes i}_A \otimes A
    \ar@{=}[r]& K_A^{\otimes i} \otimes A \ar[r] & 0}
\]
This gives the distinguished triangle:
\begin{equation}
  \xymatrix{K^{\otimes i}_A \otimes A \ar[r] & Q^{i+1}_A \ar[r] & Q_A^i \ar[r] & K_A^{\otimes i} \otimes A[1].}\label{eq:Q-cones}
\end{equation}
In particular, if $A$ is descendable of index $\leq d$ and $s \in \mathcal{S}$, then 
\begin{equation}
  s \in \closure{(s \otimes K_A^{\otimes d-1} \otimes A)}
  \star \closure{(s \otimes K_A^{\otimes d-2} \otimes A)} \star \cdots \star \closure{(s\otimes A)}.\label{eq:desc-index}
\end{equation}
Hence, if $A$ is descendable of index $\leq d$, then $A$ is
descendable.  The converse also holds
(\cite[Prop.~3.15]{2013arXiv1309.1808B}, \cite[Prop.~3.26]{MR3459022},
and \cite[Lem.~11.20]{MR3674218}), but we will not need it for our
main results.
\begin{example}\label{E:lazard}
  Let $R$ be a ring. Let $M$ be a countably presented $R$-module. If
  $M$ is flat over $R$, then
  \[
    \Ext^r_R(M,N) = 0
  \]
  for all $r>1$ and $R$-modules $N$
  \cite[Thm.~3.2]{lazard_autour}. Now let $A$ be a faithfully flat and
  countably presented $R$-algebra. Then $R$ is a flat and countably
  presented $R$-module. Let $L=\coker (R \to A)$; then $L$ is also a
  flat and countably presented $R$-module ($L \otimes_R A$ is a direct
  summand of $A\otimes_R A$). In particular, $L^{\otimes 2}$ is a flat
  and countably presented $R$-module.  But $K_A=L[-1]$, so
  \[
    \Hom_R(K_A^{\otimes 2},R) \simeq \Ext^2_R(L^{\otimes 2},R) \simeq 0.
  \]
  Hence, $A$ is descendable of index $\leq 2$.
\end{example}
\begin{example}\label{E:descendable_counter}
  Let $X=B_{\F_2}(\Z/2\Z)$. Consider $\mathcal{S}=\DQCOH(X)$. Let
  $\pi \colon \spec \F_2 \to X$ be the standard finite \'etale
  covering. Then $A=\RDERF \pi_*\Orb_{\spec \F_2}$ (the ring of
  regular functions on $\Z/2\Z$) is not descendable in
  $\DQCOH(X)$. Indeed, it is easily calculated that
  $K_A\simeq \Orb_X[-1]$. It follows from Equation
  \eqref{eq:desc-index} that if $A$ is descendable of index $\leq d$,
  then $\Orb_X$ belongs to
  $\closure{A}^{\star d} \subseteq \DQCOH(X)^c$. But this is
  impossible: $\Orb_X \notin \DQCOH(X)^c$
  \cite[Rem.~4.6]{perfect_complexes_stacks}.
\end{example}
We now have a key lemma.
\begin{lemma}\label{L:amplitude-tor-star}
  Let $(\mathcal{S},\otimes)$ be a tensor triangulated category. Let
  $\shfcoho \colon \mathcal{S} \to \mathcal{B}$ be a homological
  functor. Let $A$ be a descendable $\mathcal{S}$-algebra of index $\leq d$. If
  $s\in \mathcal{S}$, then
  \[
    \amp_{\shfcoho}(s) \subseteq \bigcup_{i=1}^{d}\amp_{\shfcoho}(s\otimes K_A^{\otimes i-1} \otimes A).
  \]
\end{lemma}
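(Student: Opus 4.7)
The plan is to derive this immediately by combining the two tools already developed: the iterated cone description of $s$ afforded by descendability of $A$ of index $\leq d$, and the amplitude bound for $\star$-expressions given by Lemma \ref{L:amplitude-star}.

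Concretely, I would first invoke the chain of distinguished triangles \eqref{eq:Q-cones} that relates the objects $Q_A^i$ to each other, together with the fact that when $A$ is descendable of index $\leq d$, the unit $1_{\mathcal{S}}$ is a summand of $Q_A^d$ (a summand being preserved under the closure operation). Tensoring with $s$ preserves triangles, summands, and finite coproducts, so tensoring the telescope of triangles \eqref{eq:Q-cones} with $s$ and passing to closures yields precisely the containment \eqref{eq:desc-index}, namely
\[
s \in \closure{(s \otimes K_A^{\otimes d-1} \otimes A)} \star \closure{(s \otimes K_A^{\otimes d-2} \otimes A)} \star \cdots \star \closure{(s\otimes A)}.
\]
This step is already recorded in the preamble to the lemma, so I would simply quote it.

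Next, I would apply Lemma \ref{L:amplitude-star} to this $\star$-expression. With $S_i = \{\,s \otimes K_A^{\otimes d-i} \otimes A\,\}$ (a singleton) for $i=1,\dots,d$, the lemma gives
\[
\amp_{\shfcoho}(s) \subseteq \bigcup_{i=1}^{d} \amp_{\shfcoho}\bigl(s \otimes K_A^{\otimes d-i} \otimes A\bigr),
\]
and after reindexing $j = d-i+1$ this is exactly the stated bound. The only content is to check that the amplitude is an invariant of the closure operation; this is immediate because $\shfcoho$ is homological (hence sends direct summands to direct summands and finite coproducts to finite coproducts), and because a distinguished triangle yields a long exact sequence in $\mathcal{B}$, so the amplitude of the middle term lies in the union of the amplitudes of the outer terms.

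There is no real obstacle here: the argument is a one-line combination of Lemma \ref{L:amplitude-star} and the display \eqref{eq:desc-index}. The only mild subtlety to flag is that Lemma \ref{L:amplitude-star} as stated requires $s$ to lie in a $\star$-expression of closures of \emph{sets} of objects, which the reindexing above arranges trivially by taking each $S_i$ to be a singleton. With that understood, the proof reduces to writing these two sentences.
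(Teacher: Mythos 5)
Your proposal is correct and coincides with the paper's own proof, which is exactly the one-line combination of Lemma \ref{L:amplitude-star} with the containment \eqref{eq:desc-index}. The extra details you supply (tensoring the triangles \eqref{eq:Q-cones} with $s$, taking singleton sets $S_i$, and reindexing) are all sound and merely spell out what the paper leaves implicit.
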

\begin{proof}
  Combine Lemma \ref{L:amplitude-star} with Equation
  \eqref{eq:desc-index}.
\end{proof}
If $X$ is an algebraic stack of finite cohomological
dimension, let $\cd(X)$ be its cohomological dimension. Let
$\pi\colon X \to Y$ be a \emph{concentrated} morphism of quasi-compact
and quasi-separated algebraic stacks; that is, $X\times_Y \spec A$ has
finite cohomological dimension for all affine schemes $\spec A$ and
morphisms $\spec A \to Y$ \cite[\S2]{perfect_complexes_stacks}. Let
\[
  \cd(\pi) = \max\{\cd(X\times_Y V) \suchthat
  \mbox{$V$ is an affine object of $Y_{\lisset}$}\}
\]
It follows from \cite[Lem.~2.2]{perfect_complexes_stacks} that
$\cd(\pi) <\infty$.
\begin{lemma}\label{L:amplitude-stacks}
  Let $\pi \colon U \to X$ be a quasi-compact and quasi-separated
  morphism of algebraic stacks. Let
  $\shfcoho \colon \DQCOH(X) \to \mathcal{B}$ be a cohomological
  functor. Assume that $\RDERF \pi_*\Orb_U$ is descendable of index
  $\leq d$. If $\pi$ is concentrated and $M \in \DQCOH(X)$, then
  \[
    \amp_{\shfcoho}(M) \subseteq \bigcup_{i=1}^d \amp_{\shfcoho}(\RDERF
    \pi_*\LDERF \pi^*(M\otimes^{\LDERF}_{\Orb_X} K_{\RDERF
      \pi_*\Orb_U}^{\otimes i-1})).
  \]
\end{lemma}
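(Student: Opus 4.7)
The plan is to take $\mathcal{S}=\DQCOH(X)$ with its usual tensor triangulated structure coming from $\otimes^{\LDERF}_{\Orb_X}$, and set $A=\RDERF\pi_*\Orb_U$, which is a commutative algebra object of $\mathcal{S}$. By hypothesis $A$ is descendable of index $\leq d$, so Lemma \ref{L:amplitude-tor-star} applied to $s=M$ immediately yields
\[
\amp_{\shfcoho}(M) \subseteq \bigcup_{i=1}^{d} \amp_{\shfcoho}\bigl(M \otimes^{\LDERF}_{\Orb_X} K_A^{\otimes i-1} \otimes^{\LDERF}_{\Orb_X} A\bigr).
\]

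The remainder of the argument is then to identify each tensor product appearing above with the corresponding $\RDERF\pi_*\LDERF\pi^*$ term. Since $A=\RDERF\pi_*\Orb_U \simeq \RDERF\pi_*\LDERF\pi^*\Orb_X$, it is enough to show that for any $N\in \DQCOH(X)$ we have $N\otimes^{\LDERF}_{\Orb_X} \RDERF\pi_*\Orb_U \simeq \RDERF\pi_*\LDERF\pi^*N$. This is just the projection formula, which is available because $\pi$ is assumed to be concentrated (see \cite[\S2]{perfect_complexes_stacks}). Applying this with $N = M\otimes^{\LDERF}_{\Orb_X} K_A^{\otimes i-1}$ gives
\[
M \otimes^{\LDERF}_{\Orb_X} K_A^{\otimes i-1} \otimes^{\LDERF}_{\Orb_X} A \simeq \RDERF\pi_*\LDERF\pi^*\bigl(M\otimes^{\LDERF}_{\Orb_X} K_A^{\otimes i-1}\bigr),
\]
and substituting into the first displayed inclusion yields the stated bound on $\amp_{\shfcoho}(M)$.

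There is no real obstacle here; the whole content is in Lemma \ref{L:amplitude-tor-star} together with the projection formula. The only point that requires a moment of care is verifying that $\RDERF\pi_*\Orb_U$ is genuinely an algebra object of $\DQCOH(X)$ (needed to form $K_A$, $Q_A^i$, and speak of descendability there), which is standard since $\pi$ is concentrated and hence $\RDERF\pi_*$ preserves $\DQCOH$ and is lax symmetric monoidal via the projection formula.
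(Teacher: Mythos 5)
Your proposal is correct and is essentially identical to the paper's proof, which likewise just combines Lemma \ref{L:amplitude-tor-star} (applied to $s=M$ with $A=\RDERF\pi_*\Orb_U$) with the projection formula \cite[Cor.~4.12]{perfect_complexes_stacks} to rewrite each term $M\otimes^{\LDERF}_{\Orb_X} K_A^{\otimes i-1}\otimes^{\LDERF}_{\Orb_X} A$ as $\RDERF\pi_*\LDERF\pi^*(M\otimes^{\LDERF}_{\Orb_X} K_A^{\otimes i-1})$. Your additional remark about $\RDERF\pi_*\Orb_U$ being an algebra object is a reasonable sanity check that the paper leaves implicit.
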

\begin{proof}
  Combine Lemma \ref{L:amplitude-tor-star} with the projection formula
  \cite[Cor.~4.12]{perfect_complexes_stacks}, which shows that
  $\RDERF \pi_*\LDERF \pi^*(M\otimes^{\LDERF}_{\Orb_X} K_{\RDERF
    \pi_*\Orb_U}^{\otimes i-1}) \simeq (M\otimes^{\LDERF}_{\Orb_X}
  K_{\RDERF \pi_*\Orb_U}^{\otimes i-1}) \otimes^{\LDERF}_{\Orb_X}
  \RDERF \pi_*\Orb_U$.
\end{proof}
The following example improves upon Example
\ref{E:descendable_counter}.
\begin{example}
  Let $\pi \colon U \to X$ be a concentrated morphism of algebraic
  stacks that is descendable of index $\leq d$. If $U$ has finite
  cohomological dimension, then so does $X$. Indeed, if $W$ is an
  algebraic stack, consider the homological functor
  $\shfcoho_W \colon \DQCOH(W) \to \AB$ that sends $M$ to
  $\shfcoho^0(W,M)$. It follows immediately that
  $\shfcoho_X(\RDERF \pi_*(-)) = \shfcoho_U(-)$.  Lemma
  \ref{L:amplitude-stacks} implies that if $M\in \QCOH(X)$, then
  \[
    \amp_{\shfcoho_X}(M) \subseteq \bigcup_{i=1}^d
    \amp_{\shfcoho_U}(\LDERF\pi^*(M \otimes^{\LDERF}_{\Orb_X}
    K^{\otimes i-1}_{\RDERF \pi_*\Orb_U})) \subseteq (-\infty,(d-1)(\cd(\pi)+1)].
  \]
  Hence, $\cd(X)\leq (d-1)(\cd(\pi)+1)$.
\end{example}

Our interest in descendable morphisms is because of the following.
\begin{proposition}\label{P:desc-prods}
  Let $\pi \colon U \to X$ be a concentrated, faithfully flat, and
  finitely presented morphism of algebraic stacks. Let $\Lambda$ be a set. Assume that
  \begin{enumerate}
  \item $\RDERF \pi_*\Orb_U$ is
    descendable in $\DQCOH(X)$;
  \item $\QCOH(U)$ satisfies \textup{AB}$4^*n(\Lambda)$ for some $n$; and 
  \item $\DCAT^+(\QCOH(U)) \simeq \DQCOH^+(U)$ and
    $\DCAT^+(\QCOH(X)) \simeq \DQCOH^+(X)$.
  \end{enumerate}
  Then $\QCOH(X)$ satisfies \textup{AB}$4^*m(\Lambda)$ for some $m$.
\end{proposition}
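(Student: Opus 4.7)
The plan is to bound the cohomological amplitude of the product $\tilde P := \prod^{\DQCOH(X)}_\lambda M_\lambda[0]$ uniformly in the family $\{M_\lambda\}_{\lambda \in \Lambda}$ of quasi-coherent sheaves on $X$, and then invoke the last paragraph of Example \ref{E:adjoints-prod-refined} (permitted via hypothesis (3) for $X$) to conclude AB$^*m(\Lambda)$ for $\QCOH(X)$. The naive approach of applying Lemma \ref{L:amplitude-stacks} to $\tilde P$ directly is circular, since $(\prod_\lambda M_\lambda) \otimes A$ is not in general isomorphic to $\prod_\lambda (M_\lambda \otimes A)$; the key idea is to apply the descendability decomposition to each $M_\lambda$ first and then take products.

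By (1), the algebra $A := \RDERF \pi_*\Orb_U$ is descendable, hence of some finite index $\leq d$. For each fixed $\lambda$, \eqref{eq:desc-index} applied with $s = M_\lambda$, together with the triangles \eqref{eq:Q-cones}, exhibits $M_\lambda$ as a direct summand of an iterated extension built from $M_\lambda \otimes K_A^{\otimes i} \otimes A$, $i = 0, \ldots, d-1$. Since $\DQCOH(X)$ admits small products, and these preserve distinguished triangles and direct summand decompositions, taking products over $\lambda$ shows that $\tilde P$ is a direct summand of an iterated extension of $\prod^{\DQCOH(X)}_\lambda (M_\lambda \otimes K_A^{\otimes i} \otimes A)$, $i = 0, \ldots, d-1$. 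Applying Lemma \ref{L:amplitude-star} with $\shfcoho = \COHO{\cdot} \colon \DQCOH(X) \to \QCOH(X)$ then yields
\[
\amp(\tilde P) \subseteq \bigcup_{i=0}^{d-1} \amp\Bigl(\prod^{\DQCOH(X)}_\lambda (M_\lambda \otimes K_A^{\otimes i} \otimes A)\Bigr).
\]

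The final step bounds each piece uniformly. The projection formula (valid since $\pi$ is concentrated) and the fact that $\RDERF \pi_*$ preserves products (being right adjoint to $\LDERF \pi^*$) give
\[
\prod^{\DQCOH(X)}_\lambda (M_\lambda \otimes K_A^{\otimes i} \otimes A) \simeq \RDERF \pi_*\Bigl(\prod^{\DQCOH(U)}_\lambda (\LDERF \pi^* M_\lambda \otimes \LDERF \pi^* K_A^{\otimes i})\Bigr).
\]
Since $A$ has cohomological amplitude in $[0, \cd(\pi)]$, the defining triangle $K_A \to \Orb_X \to A$ forces $K_A$, and hence $K_A^{\otimes i}$, into some fixed cohomological interval $[a_i, b_i]$ depending only on $i$ and $\cd(\pi)$; tensoring with $\LDERF \pi^* M_\lambda$, which lies in degree $0$, keeps each $\LDERF \pi^* M_\lambda \otimes \LDERF \pi^* K_A^{\otimes i}$ in $\DQCOH^{[a_i, b_i]}(U)$ uniformly in $\lambda$. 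Hypotheses (2) and (3), combined with the penultimate paragraph of Example \ref{E:adjoints-prod-refined}, then place the inner product in $\DQCOH^{[a_i, b_i + n]}(U)$, and concentratedness of $\pi$ bounds the upper amplitude of its image under $\RDERF \pi_*$ by $b_i + n + \cd(\pi)$. Taking the maximum over $i \in \{0, \ldots, d-1\}$ produces a uniform upper bound $m$ on $\amp_+(\tilde P)$, depending only on $d$, $n$, and $\cd(\pi)$, as required.
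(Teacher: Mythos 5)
Your overall architecture is the right one, and in fact it is essentially the paper's: decompose via descendability, commute the product past $\RDERF\pi_*$ using the projection formula, apply \textup{AB}$4^*n(\Lambda)$ on $U$ via Example \ref{E:adjoints-prod-refined}, and push forward. (The paper packages your ``decompose each $M_\lambda$ first, then take products'' step into a single homological functor $F(M)=\COHO{0}(\prod_\lambda (N_\lambda\otimes^{\LDERF}_{\Orb_X}M))$ and applies Lemma \ref{L:amplitude-stacks} to $\Orb_X$; your explicit version of this step, and your observation about why the naive order of operations is circular, are both correct.)

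The gap is in the final paragraph, at the amplitude bounds. You claim that the cohomological amplitude of $A$ forces $K_A$, ``and hence $K_A^{\otimes i}$,'' into a fixed interval, and that tensoring with $\LDERF\pi^*M_\lambda$ (in degree $0$) preserves that interval. Neither implication holds: derived tensor products are only right $t$-exact, so cohomological amplitude of the factors does not bound the amplitude of $K_A\otimes^{\LDERF}_{\Orb_X}K_A$ or of $M_\lambda\otimes^{\LDERF}_{\Orb_X}K_A^{\otimes i}$ from below --- Tor terms can appear in arbitrarily negative degrees (e.g.\ $k\otimes^{\LDERF}_{R}k$ for $R=k[x]/(x^2)$). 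What is actually needed is a \emph{Tor-amplitude} bound on $K_A^{\otimes i}$, i.e.\ the $\SHom$-amplitude bound of Corollary \ref{C:K-bound}, which says $K_A^{\otimes i}$ is locally a complex of projectives in a fixed range of degrees, uniformly. That bound is not formal: it is proved via Theorem \ref{T:push} and the bootstrap of \S\ref{S:desc-non-aff}, resting ultimately on Lazard's theorem that countably presented flat modules have projective dimension $\leq 1$ (Example \ref{E:lazard}). This is precisely why the paper states Proposition \ref{P:desc-prods} but defers its proof until after those estimates are available. Without the uniform two-sided bound on $\pi^*M_\lambda\otimes^{\LDERF}_{\Orb_U}\LDERF\pi^*K_A^{\otimes i}$, you also cannot invoke Example \ref{E:adjoints-prod-refined} for the inner product, since that argument requires the family to lie in $\DCAT^{[a,b]}$ for a fixed interval so that the product lands in $\DCAT^{+}$. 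To repair the proof, replace your cohomological-amplitude bookkeeping with the $\SHom$-amplitude bounds of Corollary \ref{C:K-bound} and Lemma \ref{L:hom-amp-properties}; the rest of your argument then goes through.
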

Unfortunately, we are not yet in a position to prove Proposition
\ref{P:desc-prods}. We will first need to
establish some cohomological estimates for $\RDERF \pi_*\Orb_U$.
\section{Descendable morphisms of stacks}\label{S:desc-non-aff}
In this section, we establish that a wide class of morphisms of
algebraic stacks are descendable.
\begin{theorem}\label{T:desc-non-aff}
  Let $\pi \colon U \to X$ be a concentrated, faithfully flat, and
  finitely presented morphism of algebraic stacks. If $X$ has finite
  cohomological dimension, then $\RDERF\pi_*\Orb_U$ is descendable of index $\cd(X)+3$ in
  $\DQCOH(X)$ when
  \begin{enumerate}
  \item \label{TI:desc-non-aff:tame}$\pi$ is tame (e.g., representable); or
  \item \label{TI:desc-non-aff:aff}$\pi$ has affine stabilizers and
    $X$ has equicharacteristic; or
  \item \label{TI:desc-non-aff:countable} $X$ is noetherian.
  \end{enumerate}
\end{theorem}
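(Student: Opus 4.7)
The goal is to show $\phi_A^{\otimes d} = 0$ in $\DQCOH(X)$ for $d = \cd(X)+3$, where $\phi_A:K_A \to \Orb_X$ is the fiber of the unit $\Orb_X \to A:=\RDERF\pi_*\Orb_U$. The plan is to combine Example \ref{E:lazard}, which gives descendability of index $\leq 2$ in the affine setting, with a \v{C}ech-type cohomological ascent that pays $\cd(X)+1$ extra units in passing from an affine cover of $X$ to $X$ itself.

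First I would reduce to the case that $\pi$ is affine. Choosing a smooth affine cover $r:U'\to U$, the composite $\pi' := \pi\circ r:U'\to X$ remains concentrated, faithfully flat, and of finite presentation, and the induced map $A\to A':=\RDERF\pi'_*\Orb_{U'}$ furnishes a compatible map of fiber sequences $K_A\to K_{A'}$ with $\phi_{A'}\circ(K_A\to K_{A'}) = \phi_A$, so that vanishing of $\phi_{A'}^{\otimes d}$ forces vanishing of $\phi_A^{\otimes d}$; it therefore suffices to prove the theorem for $\pi'$. The three case hypotheses enter only to ensure this reduction is available: in case \ref{TI:desc-non-aff:tame} tameness is preserved by smooth refinement; in case \ref{TI:desc-non-aff:aff} the equicharacteristic local structure of stacks with affine stabilizers supplies the required cover; in case \ref{TI:desc-non-aff:countable} noetherian induction stratifies $X$ into pieces covered by the previous cases. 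With $\pi$ now assumed affine, picking a smooth affine cover $q:V=\spec R\to X$ and pulling back to $\pi_V:U_V\to V$, Example \ref{E:lazard} applied to the faithfully flat, finitely presented ring map $R\to\Gamma(U_V,\Orb_{U_V})$ gives that $q^*A$ is descendable of index $\leq 2$ in $\DQCOH(V)$, equivalently $q^*(\phi_A^{\otimes 2}) = 0$.

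To promote this local vanishing to the global one, I would use the descendable cover $B:=\RDERF q_*\Orb_V$ and its associated cobar tower, whose successive cofibers are tensor powers of $K_B$ smashed with $B$. The obstruction to lifting the nullhomotopy of $\phi_A^{\otimes 2}$ across this tower lives, level by level, in $\RHom$-groups that are $B$-modules and whose amplitudes are controlled by $\cd(X)$; the amplitude estimates of Lemmas \ref{L:amplitude-star} and \ref{L:amplitude-tor-star} then imply that tensoring by one additional copy of $\phi_A$ annihilates one level of obstruction. After $\cd(X)+1$ further tensorings one obtains $\phi_A^{\otimes(\cd(X)+3)} = 0$, as required. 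The main obstacle I anticipate is the precise bookkeeping for this ascent in the unbounded triangulated setting: one must verify that each obstruction is annihilated by exactly one further copy of $\phi_A$, producing the sharp bound $\cd(X)+1$ rather than something weaker, and one must contend (especially in cases \ref{TI:desc-non-aff:aff} and \ref{TI:desc-non-aff:countable}) with the fact that $A$ may be a genuinely derived $\Orb_X$-algebra, so the amplitude estimates must be applied in $\DQCOH(X)$ throughout.
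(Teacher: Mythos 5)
Your proposal has the right overall shape---Lazard's $\Ext^{\leq 1}$-vanishing for countably presented flat modules (Example \ref{E:lazard}) as the affine input, plus a globalization step costing $\cd(X)+1$---but there are three genuine gaps. First, the reduction to ``$\pi$ affine'' does not work as stated: composing with a smooth affine cover $r\colon U'\to U$ makes the \emph{source} an affine scheme, but $\pi'=\pi\circ r$ is an affine \emph{morphism} only if $X$ has affine diagonal, so you cannot then apply Example \ref{E:lazard} to $R\to\Gamma(U_V,\Orb_{U_V})$ after base change to an affine chart $V$ of $X$. The paper instead runs an induction on the diagonal ($\pi$ affine, then affine diagonal, then affine second and third diagonals), bootstrapped simultaneously with a pushforward amplitude bound (Theorem \ref{T:push}) and a local-to-global Ext bound (Theorem \ref{T:ext-vanishing}). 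Second, your cobar-tower ascent is circular: it presupposes that $B=\RDERF q_*\Orb_V$ is descendable, which is exactly an instance of the theorem being proved (for the cover $q\colon V\to X$); and even granting that, composing descent indices would give a bound like $2(\cd(X)+3)$ rather than $\cd(X)+3$, so the claim that ``each obstruction is annihilated by exactly one further copy of $\phi_A$'' is precisely the hard point and is not justified by Lemmas \ref{L:amplitude-star} and \ref{L:amplitude-tor-star}, which only control amplitudes of objects built by $\star$-products under a \emph{given} homological functor.

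The paper's actual mechanism is different and non-circular: one proves the sharp local estimate that $K_A^{\otimes i}$ has $\SHom$-amplitude contained in $[-i(e_\pi+1),\,-i+1]$ (Corollary \ref{C:K-bound}, obtained by iterating the defining triangle of $K_A$ tensored with powers of $A$, using that its pullback along the faithfully flat $\pi$ splits), and then converts this local bound into the global vanishing $\Hom_{\Orb_X}(K_A^{\otimes i},\Orb_X)=0$ for $i=\cd(X)+3$ by writing $K_A^{\otimes i}\simeq\hocolim{s}E_s$ with $E_s$ pseudo-coherent and running Milnor-sequence/$\holim{}$ arguments through $\RDERF\Gamma(X,-)$, which costs $\cd(X)+1$. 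This is also where your third gap lies: the three case hypotheses are not there to enable a reduction to affine covers (that is always available); they are needed for the pseudo-coherent approximation $K_A^{\otimes i}\simeq\hocolim{s}E_s$ (Lemma \ref{L:hocolim_pres}), which relies on absolute noetherian approximation and countable generation of the cohomology of $\RDERF\pi_*\Orb_U$---properties that fail for general concentrated morphisms with infinite stabilizers in mixed characteristic. Your suggestion that the noetherian case follows by stratifying into the tame and equicharacteristic cases is not correct.
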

The conditions
\itemref{TI:desc-non-aff:tame}-\itemref{TI:desc-non-aff:countable}
arise in Theorem \ref{T:desc-non-aff} because of the poor
approximation properties of concentrated morphisms with infinite
stabilizers in mixed characteristic
\cite[\S2]{hallj_dary_alg_groups_classifying}. Example
\ref{E:descendable_counter} shows that $X$ having finite cohomological
dimension is essential in Theorem \ref{T:desc-non-aff}.

We will prove Theorem \ref{T:desc-non-aff} by establishing global
vanishing results for certain complexes of sheaves, which we now
define.

Now let $N$ be a quasi-coherent $\Orb_X$-module. Consider the
homological functor
$\Hom_{\Orb_X}(-,N) \colon \DQCOH(X) \to \AB^{\opp}$. By the
discussion in \S\ref{S:desc_bg}, we obtain a notion of
$\Hom(-,N)$-amplitude. We define the \emph{$\Hom$-amplitude} of $M$ to
be
\[
  \amp_{\Hom}(M) = \bigcup_{N} \amp_{\Hom(-,N)}(N).
\]
\begin{example}
  Because of the contravariance, the $\Hom$-amplitude can be
  confusing. We offer the following simple example. Let
  $X=\mathbf{P}^1_k$, where $k$ is a field, and let
  $\pi \colon X \to \spec k$ be the structure map. Then
  $\RDERF \pi_*\Orb(-2)$ has $\Hom$-amplitude $[-1,-1]$ on $\spec
  k$. Indeed, a standard calculation shows that
  $\RDERF \pi_*\Orb(-2) \simeq \Orb_{\spec k}[-1]$. While this has
  cohomological amplitude $[1,1]$ in $\DCAT(k)$, the $\Hom$-amplitude
  is defined in the opposite category, so there is a reversing of
  degrees (i.e., homological grading).
\end{example}
We also define the \emph{$\SHom$-amplitude} as:
\[
  \amp_{\SHom}(M) = \bigcup_{V} \amp_{\Hom} (M_V),
\]
where $V$ ranges over all affine objects of $X_{\lisset}$. The
following result follows easily from arguments similar to those in
\cite[Tag \spref{0B66}]{stacks-project}.
\begin{lemma}\label{L:hom-amp-properties}
  Let $X$ be an algebraic stack. Let $P\in \DQCOH(X)$.
  \begin{enumerate}
  \item\label{LI:hom-amp-properties:cplx} If $P$ has $\SHom$-amplitude
    $[a,b]$, then $P$ is locally quasi-isomorphic to a complex of
    projective $\Orb_X$-modules supported only in cohomological
    degrees $[-b,-a]$.
  \item\label{LI:hom-amp-properties:tensor} If $P$ has
    $\SHom$-amplitude $[a,b]$ and $P'$ has $\SHom$-amplitude
    $[a',b']$, then $P\otimes^{\LDERF}_{\Orb_X} P'$ has
    $\SHom$-amplitude contained in $[a+a',b+b']$.
  \item\label{LI:hom-amp-properties:flat} If $\pi \colon U \to X$ is
    faithfully flat and $\LDERF \pi^*P$ has $\SHom$-amplitude $[a,b]$,
    then so does $P$.
  \item\label{LI:hom-amp-properties:summand} If $Q$ is a direct
    summand of $P$, and $P$ has $\SHom$-amplitude $[a,b]$, then $Q$
    has $\SHom$-amplitude contained in $[a,b]$.
  \end{enumerate}
\end{lemma}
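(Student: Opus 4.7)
The plan is to establish \itemref{LI:hom-amp-properties:cplx} first, after which the remaining parts will follow by standard manipulations that mirror the Tor-amplitude arguments of \cite[Tag \spref{0B66}]{stacks-project}.

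For \itemref{LI:hom-amp-properties:cplx}, the $\SHom$-amplitude condition is defined affine-locally on $X_{\lisset}$, so I would fix an affine $V = \spec A$ in $X_{\lisset}$ and work with $P_V \in \DCAT(A)$ satisfying $\Ext^i_A(P_V, N) = 0$ for every $A$-module $N$ and every $i \notin [a,b]$. The first step is cohomological boundedness of $P_V$ in $[-b,-a]$: if $H^d(P_V) \neq 0$, the composite $P_V \to \tau^{\geq d}P_V \to H^d(P_V)[-d]$ is the identity on $H^d$ and hence represents a nonzero element of $\Ext^{-d}_A(P_V, H^d(P_V))$, forcing $-d \in [a,b]$; dually, an injection $H^c(P_V) \hookrightarrow I$ into an injective $A$-module yields a nonzero element of $\Ext^{-c}_A(P_V, I)$ by a direct computation (using that $I[-c]$ is already K-injective), forcing $c \in [-b,-a]$. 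Once $P_V \in \DCAT^{[-b,-a]}(A)$, one constructs a quasi-isomorphism $F^\bullet \to P_V$ with $F^\bullet$ a complex of projectives concentrated in degrees $[-b,-a]$ by descending induction: lift a projective cover $F^{-a} \twoheadrightarrow H^{-a}(P_V)$ to a map $F^{-a}[a] \to P_V$, take the shifted cone, and iterate. The crucial point is that the resolution terminates at degree $-b$: at that final step, the obstruction to the (a priori merely flat) kernel module being projective lives in an $\Ext^{b+1}_A(P_V, -)$ group, which vanishes by hypothesis.

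For \itemref{LI:hom-amp-properties:tensor}, apply \itemref{LI:hom-amp-properties:cplx} locally to present $P$ and $P'$ as projective complexes in degrees $[-b,-a]$ and $[-b',-a']$; their derived tensor product is computed termwise as a projective complex in degrees $[-b-b', -a-a']$. For \itemref{LI:hom-amp-properties:flat}, I would combine \itemref{LI:hom-amp-properties:cplx} with faithfully flat descent: for each affine $V = \spec A$ in $X_{\lisset}$, choose a smooth affine cover $W = \spec A'$ of $V \times_X U$, so that $A \to A'$ is faithfully flat and affine, and $(\LDERF \pi^* P)_W \simeq P_V \otimes^{\LDERF}_A A'$ has the projective-complex description on $W$ by hypothesis. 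Faithful flatness transfers the cohomological range back to $P_V$, and then the projective resolution over $A$ is built as in \itemref{LI:hom-amp-properties:cplx}, with each obstruction in an $\Ext_A$ group shown to vanish because its image under the faithfully flat $- \otimes_A A'$ does. Finally, \itemref{LI:hom-amp-properties:summand} is immediate: for any affine $V$ and any $N$, the group $\Ext^i(Q_V, N)$ is a direct summand of $\Ext^i(P_V, N)$, so vanishing of the latter outside $[a,b]$ implies vanishing of the former.

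The main obstacle is the ``termination at the correct length'' in the projective-resolution construction of \itemref{LI:hom-amp-properties:cplx}, which is the place where the endpoint $b$ of the $\Hom$-amplitude enters nontrivially---the interplay between $\Ext$-vanishing and projectivity of the last kernel is the genuine content of the statement---and, thereafter, the descent step in \itemref{LI:hom-amp-properties:flat}, which must transfer both boundedness of cohomology and projectivity of each resolution term along the faithfully flat extension $A \to A'$.
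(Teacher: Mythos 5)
Your treatment of parts (1), (2) and (4) is correct and is essentially the argument the paper delegates to \cite[Tag \spref{0B66}]{stacks-project}: on each affine $V=\spec A$ of $X_{\lisset}$, identify $\SHom$-amplitude $[a,b]$ with ``quasi-isomorphic to a complex of projectives in degrees $[-b,-a]$,'' after which (2) and (4) are formal. (Two small points in (1): the map $P_V \to \COHO{d}(P_V)[-d]$ already forces $-d\in[a,b]$, i.e.\ both bounds, so your dual argument with injectives is redundant; and ``projective cover'' should just be a projective surjection, since covers need not exist.)

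The genuine gap is in part (3), at the sentence ``each obstruction in an $\Ext_A$ group shown to vanish because its image under the faithfully flat $-\otimes_A A'$ does.'' The obstruction to the final syzygy $C=\coker(F^{-b-1}\to F^{-b})$ being projective lives in $\Ext^1_A(C,N)$ for arbitrary $A$-modules $N$, and for a module $C$ that is not finitely presented there is no base-change isomorphism, nor even an injection, $\Ext^1_A(C,N)\to\Ext^1_{A'}(C\otimes_AA',N\otimes_AA')$ along a faithfully flat $A\to A'$: $\RHom$ does not commute with flat base change for non-pseudo-coherent sources, so vanishing of the image says nothing about the source. What your setup actually yields is that $C\otimes_AA'$ is a projective $A'$-module (apply your part (1) over $A'$ to $P_V\otimes^{\LDERF}_AA'$, whose free resolution is $F^\bullet\otimes_AA'$). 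Passing from ``$C\otimes_AA'$ projective'' to ``$C$ projective'' is precisely the Raynaud--Gruson theorem that projectivity descends along faithfully flat ring maps \cite[Tag \spref{058S}]{stacks-project}; this nontrivial input is where the whole content of (3) sits, since the cohomological bounds $\COHO{i}(P_V)=0$ for $i\notin[-b,-a]$ descend by ordinary flat base change and already give $\Ext^i(P_V,N)=0$ for $i<a$, leaving only the projectivity of $C$ at issue. With Raynaud--Gruson inserted at that point, your proof of (3) goes through.
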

We now have our amplitude results.
\begin{theorem}\label{T:ext-vanishing}
  Let $X$ be a quasi-compact and quasi-separated algebraic stack. If
  $X$ has finite cohomological dimension, then there exists 
  a non-negative integer $d_X$ with the following property: if
  $Q\in \DQCOH(X)$ has $\SHom$-amplitude $[a,b]$, then it has
  $\Hom$-amplitude contained in $[a,b+d_X]$. 
\end{theorem}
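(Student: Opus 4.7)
The plan is to set $d_X := \cd(X)$ and convert the local resolution data supplied by Lemma~\ref{L:hom-amp-properties}\itemref{LI:hom-amp-properties:cplx} into a global $\Ext$ estimate, by running the local-to-global (hypercohomology) spectral sequence for $\SRHom$ and applying the cohomological dimension bound on quasi-coherent sheaves.

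Fix $Q \in \DQCOH(X)$ with $\SHom$-amplitude $[a,b]$ and any $N \in \QCOH(X)$. By Lemma~\ref{L:hom-amp-properties}\itemref{LI:hom-amp-properties:cplx}, on each affine $V = \spec A$ of $X_{\lisset}$ the restriction $Q|_V$ is represented by a complex $P_V^\bullet$ of projective $A$-modules concentrated in cohomological degrees $[-b,-a]$. Hence $\SRHom_{\Orb_X}(Q,N)|_V$ is computed by $\Hom_A^\bullet(P_V^\bullet, N(V))$, a complex sitting in degrees $[a,b]$ whose terms are products of copies of $N(V)$ and whose cohomology modules are the ordinary $A$-module $\Ext$-groups. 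All of these correspond to quasi-coherent sheaves on $V$; since quasi-coherence is smooth-local on $X$, it follows that the cohomology sheaves $\COHO{q}\bigl(\SRHom_{\Orb_X}(Q,N)\bigr)$ are quasi-coherent on $X$ and vanish for $q \notin [a,b]$.

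Now apply the hypercohomology spectral sequence
\begin{equation*}
  E_2^{p,q} = \shfcoho^p\bigl(X,\,\COHO{q}(\SRHom_{\Orb_X}(Q,N))\bigr) \Longrightarrow \Ext^{p+q}_{\Orb_X}(Q,N).
\end{equation*}
The cohomological dimension hypothesis yields $\shfcoho^p(X,-) = 0$ on quasi-coherent sheaves for $p > d_X$, and the previous paragraph confines the $q$-direction to $[a,b]$. Thus $E_2^{p,q}$ vanishes outside $0 \le p \le d_X$ and $a \le q \le b$, so $\Ext^n_{\Orb_X}(Q,N) = 0$ whenever $n < a$ or $n > b + d_X$. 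This gives $\amp_{\Hom(-,N)}(Q) \subseteq [a, b+d_X]$, and taking the union over $N \in \QCOH(X)$ produces the desired bound $\amp_{\Hom}(Q) \subseteq [a, b+d_X]$.

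The principal technical obstacle is confirming that the cohomology sheaves of $\SRHom_{\Orb_X}(Q,N)$---formed in $\MOD(X_{\lisset})$---really are quasi-coherent on $X$, and that the spectral sequence abuts to the $\Ext$ groups computed in $\DCAT(X)$ (equivalently in $\DQCOH(X)$, by full faithfulness). Both points should follow routinely from smooth descent combined with the standard comparison between lisse-\'etale and quasi-coherent cohomology (e.g.\ \cite[Prop.~2.1]{hallj_neeman_dary_no_compacts}), since $\SRHom_{\Orb_X}(Q,N)$ is bounded and has quasi-coherent cohomology.
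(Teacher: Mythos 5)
There is a genuine gap, and it sits exactly where you flagged it as a ``routine'' point: the cohomology sheaves of $\SRHom_{\Orb_X}(Q,N)$ are \emph{not} quasi-coherent in general, and without quasi-coherence the finite cohomological dimension hypothesis gives you no control over the $p$-direction of your spectral sequence. The problem is that Lemma~\ref{L:hom-amp-properties}\itemref{LI:hom-amp-properties:cplx} only provides local resolutions by \emph{projective} modules, which are typically infinitely generated; and for an infinitely generated projective $A$-module $P$, the sheaf $\SHom_{\Orb_{\spec A}}(\widetilde{P},\widetilde{N})$ is not quasi-coherent, because $\Hom_A(P,N)\otimes_A A_f \to \Hom_{A_f}(P_f,N_f)$ fails to be an isomorphism (already for $A=\Z$, $P=\Z^{(\N)}$, $N=\Z$: the left side consists of sequences in $\Z^{\N}[1/f]$ with bounded denominators, the right side is all of $(\Z[1/f])^{\N}$). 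The paper explicitly warns about this in the introduction: $\SRHom_{\Orb_X}(M,N)$ need not be quasi-coherent even for $M,N$ quasi-coherent on a variety over a field, and handling this is the main technical content of \S\ref{S:desc-non-aff}. Since $\cd(X)$ only bounds $\shfcoho^p(X,-)$ on quasi-coherent sheaves, your $E_2^{p,q}$ terms with $p>d_X$ have no reason to vanish, and the claimed bound $d_X=\cd(X)$ does not follow. (It is consistent with this that the paper's proof only achieves the weaker constant $d_X=(e_\rho+1)(\cd(X)+2)$.)

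The paper's actual route is quite different: Theorem~\ref{T:ext-vanishing} is proved inside a simultaneous bootstrap with Theorems~\ref{T:desc-non-aff} and \ref{T:push} and Corollary~\ref{C:K-bound}. One takes a smooth surjection $\rho\colon V\to X$ from an affine scheme, uses descendability of $\RDERF\rho_*\Orb_V$ (of index $\cd(X)+3$) together with Lemma~\ref{L:amplitude-stacks} to write the amplitude of the functor $\Hom_{\Orb_X}(Q,-)$ as a union over the pieces $\RDERF\rho_*(\rho^*N\otimes^{\LDERF}\LDERF\rho^*K^{\otimes i-1})$, and then passes by adjunction to $\RHom_{\Orb_V}(\LDERF\rho^*Q,-)$ on the affine $V$, where the global $\Hom$-amplitude can be read off directly from a projective resolution at the level of modules (bypassing the non-quasi-coherent $\SRHom$ entirely). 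If you want to salvage your approach, you would need to replace the local projective resolutions by something whose $\SHom$ into quasi-coherents stays quasi-coherent (e.g.\ finitely generated projectives), which is not available at this level of generality.
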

\begin{theorem}\label{T:push}
  Let $\pi \colon U \to X$ be a flat and finitely presented morphism
  of algebraic stacks. If $\pi$ is concentrated, then there is a
  non-negative integer $e_\pi$ such that if $P$ has $\SHom$-amplitude
  $[a,b]$ on $U$, then $\RDERF \pi_*P$ has $\SHom$-amplitude contained
  in $[a-e_\pi,b+1]$.
\end{theorem}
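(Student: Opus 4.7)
The plan is to reduce the question to the case of a flat, finitely presented morphism of affine schemes, where Lazard's theorem (Example \ref{E:lazard}) supplies the $+1$ shift in the upper bound, while the cohomological dimension of $\pi$ together with the combinatorics of a smooth hypercover of $U$ gives the $-e_\pi$ shift in the lower bound.

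First I would reduce to $X = \spec R$ affine. By definition the $\SHom$-amplitude of $\RDERF \pi_*P$ is computed on affine objects $V$ of $X_{\lisset}$, and since $V \to X$ is smooth and $\pi$ is flat, flat base change gives $(\RDERF \pi_*P)|_V \simeq \RDERF \pi_{V,*}(P|_{U \times_X V})$, with $\pi_V \colon U \times_X V \to V$ still satisfying all hypotheses of the theorem. Next, choose a smooth cover $p\colon W \to U$ with $W$ an affine scheme, set $W_n = W \times_U \cdots \times_U W$ ($n+1$ factors), and let $q_n = \pi \circ p_n \colon W_n \to X$. Each $W_n$ is a quasi-compact and quasi-separated algebraic space, each $q_n$ is flat, finitely presented, and concentrated, and $\cd(q_n)$ is bounded uniformly in $n$ in terms of $\cd(\pi)$ and $\cd(p)$. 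By Lemma \ref{L:hom-amp-properties}\itemref{LI:hom-amp-properties:cplx}, $P|_{W_n}$ is locally quasi-isomorphic to a complex of projective $\Orb_{W_n}$-modules in degrees $[-b,-a]$; smooth descent along $p$ then gives
\[
  \RDERF \pi_*P \simeq \holim{n \in \Delta}\RDERF q_{n,*}(P|_{W_n}).
\]

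The heart of the argument is to bound the $\SHom$-amplitude of each $\RDERF q_{n,*}(P|_{W_n})$. Iterating the smooth-cover construction on each $W_n$ reduces this to a flat finitely presented morphism of affine schemes $\spec B \to \spec R$, where $B$ is a countably presented flat $R$-module and Example \ref{E:lazard} gives $\Ext^{\geq 2}_R(B,-) = 0$. Each projective $\Orb_{W_n}$-module in the local resolution of $P|_{W_n}$ is a direct summand of a direct sum of copies of $\Orb_{W_n}$, whose pushforward is a direct summand of a direct sum of copies of $q_{n,*}\Orb_{W_n}$ and inherits the same $\Ext^{\geq 2}$ vanishing. A length-$1$ projective resolution in each slot assembles into a resolution of the pushed-forward complex in degrees $[-b-1, -a + \cd(q_n)]$, giving $\SHom$-amplitude contained in $[a - \cd(q_n), b+1]$: the $+1$ on the right is precisely the Lazard shift, and the $-\cd(q_n)$ on the left accounts for the higher direct images.

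The main obstacle is passing these bounds through the cosimplicial totalisation over $\Delta$, which a priori is an infinite homotopy limit. The uniform bound on $\cd(q_n)$ together with the concentratedness of $\pi$ ensures that only finitely many cosimplicial degrees contribute non-trivially to any $\Ext^i_V(\RDERF \pi_*P|_V, N)$ in a fixed cohomological range, so the homotopy limit effectively behaves as a finite limit for the purposes of $\SHom$-amplitude. Absorbing $\cd(\pi)$, $\cd(p)$, and the extra shift produced by the hypercover into a single constant $e_\pi$ yields the desired bound $[a - e_\pi, b+1]$.
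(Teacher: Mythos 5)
Your reduction to the affine case and the use of Lazard's theorem there (projective dimension $\leq 1$ for countably presented flat modules, giving the $+1$ in the upper bound) match the paper's base case. The gap is in how you pass from the affine cover back to $U$. You write $\RDERF \pi_*P \simeq \holim{n\in\Delta}\RDERF q_{n,*}(P|_{W_n})$ and then assert that, because only finitely many cosimplicial degrees contribute to any fixed cohomological degree, the totalization ``effectively behaves as a finite limit for the purposes of $\SHom$-amplitude.'' That convergence argument controls the \emph{cohomology sheaves} of the totalization (a covariant statement), but $\SHom$-amplitude is about the contravariant functors $\Hom(-,N)$ for arbitrary quasi-coherent $N$, and $\RHom(\holim{n} Y_n, N)$ is not computed from the $\RHom(Y_n,N)$: the homotopy limit in the source does not turn into anything you can truncate or filter. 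Knowing that each $\RDERF q_{n,*}(P|_{W_n})$ has $\SHom$-amplitude in $[a-c,b+1]$, and even knowing that $\RDERF\pi_*P$ agrees with a finite partial totalization in any bounded range of cohomological degrees, does not bound $\Ext^j_{\Orb_V}((\RDERF\pi_*P)|_V,N)$. This is exactly the subtlety the paper is organised around: $\SRHom(M,N)$ fails to be quasi-coherent for countably presented $M$, so one only gets leverage when the source is presented as a homotopy \emph{colimit} of pseudo-coherent objects (turning into a homotopy limit of $\RHom$'s controlled by Milnor sequences), never when it is a homotopy limit.

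The paper avoids the infinite totalization entirely by using descendability. In step \itemref{proof:hom} of the combined proof, one takes a smooth surjection $p\colon W\to U$ from an affine scheme and invokes Theorem \ref{T:desc-non-aff} for $p$: the algebra $\RDERF p_*\Orb_W$ is descendable of index $\leq \cd(U)+3$, so by Equation \eqref{eq:desc-index} the object $P$ lies in a \emph{finite} $\star$-product of closures of the objects $\RDERF p_*\LDERF p^*(P\otimes^{\LDERF}_{\Orb_U}K_{\RDERF p_*\Orb_W}^{\otimes i-1})$, each of which is pushed forward along the affine morphisms $p$ and $\pi\circ p$. Finite filtrations by distinguished triangles and retracts do pass through $\Hom$-amplitude estimates (Lemmas \ref{L:amplitude-star} and \ref{L:amplitude-stacks}), and Corollary \ref{C:K-bound} supplies the needed bound on the $\SHom$-amplitude of $K_{\RDERF p_*\Orb_W}^{\otimes i-1}$. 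Note also that this makes the argument a genuine mutual induction: Theorem \ref{T:push} for general $\pi$ needs Theorem \ref{T:desc-non-aff} for the affine cover, whose proof (step \itemref{proof:loc-glob-2}) in turn needs Theorem \ref{T:push} for affine morphisms together with $\hocolim$ presentations by pseudo-coherent complexes. To repair your proof you would need to replace the cosimplicial totalization by this finite descendability filtration, or else supply a genuinely new argument for why $\Hom(-,N)$-amplitude is preserved under the totalization; as written, that step fails.
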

\begin{corollary}\label{C:K-bound}
  Let $\pi \colon U \to X$ be a faithfully flat and finitely presented
  morphism of algebraic stacks. If $\pi$ is concentrated, then for all
  non-negative integers $i$, $K_{\RDERF \pi_*\Orb_U}^{\otimes i}$ has
  $\SHom$-amplitude contained in $[-i(e_\pi+1),-i+1]$.
\end{corollary}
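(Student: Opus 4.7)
The plan is to induct on $i$, using the defining triangle $K_A \to \Orb_X \to A \to K_A[1]$ (where $A := \RDERF\pi_*\Orb_U$), together with the projection formula, Theorem~\ref{T:push}, and a splitting coming from the adjunction $\LDERF\pi^* \dashv \RDERF\pi_*$. The base cases $i=0,1$ will be handled directly, and the inductive step will bound $K_A^{\otimes(i+1)}$ by bounding $A \otimes K_A^{\otimes i}$ via projection formula.

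For the base cases, $K_A^{\otimes 0} = \Orb_X$ has $\SHom$-amplitude $\{0\} \subseteq [0,1]$, giving $i=0$ trivially. For $i=1$, first apply Theorem~\ref{T:push} to $P = \Orb_U$ (which has $\SHom$-amplitude $[0,0]$) to get that $A$ has $\SHom$-amplitude $\subseteq [-e_\pi, 1]$. Then apply $\SRHom(-, N)$ to the defining triangle: since $N$ is a quasi-coherent sheaf, $\SExt^j(\Orb_X, N) = 0$ for $j \neq 0$, so the resulting long exact sequence gives $\SExt^j(K_A, N) \simeq \SExt^{j+1}(A, N)$ for $j \neq 0$. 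Hence $K_A$ has $\SHom$-amplitude $\subseteq [-(e_\pi+1), 0]$, as required.

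For the inductive step, I would tensor the defining triangle with $K_A^{\otimes i}$ to obtain
\[
K_A^{\otimes(i+1)} \to K_A^{\otimes i} \to A \otimes K_A^{\otimes i} \to K_A^{\otimes(i+1)}[1]
\]
and apply the long exact sequence of $\SRHom(-, N)$. The key ingredient is bounding the $\SHom$-amplitude of $A \otimes K_A^{\otimes i}$, for which I would use the projection formula \cite[Cor.~4.12]{perfect_complexes_stacks}: $A \otimes K_A^{\otimes i} \simeq \RDERF\pi_*\LDERF\pi^*K_A^{\otimes i}$. Combined with Theorem~\ref{T:push}, this reduces to bounding the $\SHom$-amplitude of $\LDERF\pi^*K_A^{\otimes i}$ on $U$. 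For the latter, the zigzag identity for the adjunction $\LDERF\pi^* \dashv \RDERF\pi_*$ shows that the pullback of the unit $\Orb_U \to \LDERF\pi^*A$ is split by the counit, producing a canonical decomposition $\LDERF\pi^*A \simeq \Orb_U \oplus \LDERF\pi^*K_A[1]$; in particular $\LDERF\pi^*K_A \in \DQCOH^{\geq 1}(U)$, and this coconnectivity propagates through derived tensor powers to yield $\LDERF\pi^*K_A^{\otimes i} \in \DQCOH^{\geq i}(U)$.

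The hard part will be the upper bound: the tensor estimate of Lemma~\ref{L:hom-amp-properties}\itemref{LI:hom-amp-properties:tensor} alone yields a stagnant upper $\SHom$-amplitude of $0$, whereas the corollary demands a decrease of $1$ per factor. What closes the induction is the coconnectivity $\LDERF\pi^*K_A^{\otimes i} \in \DQCOH^{\geq i}(U)$ combined with the fact that each $\LDERF\pi^*K_A$ arises as a shift of a direct summand of $\LDERF\pi^*A$ (so that Lemma~\ref{L:hom-amp-properties}\itemref{LI:hom-amp-properties:summand} controls the summand amplitude while the shift delivers the decrease); together these force the upper $\SHom$-amplitude on $U$ to decrease by one per tensor factor. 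After absorbing the $+1$ introduced by Theorem~\ref{T:push} in the pushforward and feeding the resulting bound on $A \otimes K_A^{\otimes i}$ into the long exact sequence for the tensored triangle, this produces the desired net decrease of $1$ in the upper $\SHom$-amplitude of $K_A^{\otimes(i+1)}$ relative to $K_A^{\otimes i}$, closing the induction.
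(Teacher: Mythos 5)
Your base case is fine, and you have correctly identified the key structural fact --- the splitting $\LDERF\pi^*A \simeq \Orb_U \oplus \LDERF\pi^*K_A[1]$ coming from the unit/counit --- but the inductive step has a genuine gap exactly where you flag ``the hard part.'' Coconnectivity is the wrong invariant: $\LDERF\pi^*K_A^{\otimes i} \in \DQCOH^{\geq i}(U)$ constrains where the cohomology sheaves sit, whereas the \emph{upper} end of the $\SHom$-amplitude is a vanishing statement for $\SExt^j(-,N)$ in high degrees $j$, i.e.\ a projective-dimension--type bound. An object of $\DQCOH^{\geq i}$ can perfectly well have upper $\SHom$-amplitude $0$ (take $M[-i]$ for a module $M$ of projective dimension $i$), so coconnectivity cannot ``force the upper $\SHom$-amplitude to decrease by one per tensor factor.'' The summand mechanism you gesture at does work, but it needs an input you never supply: the $i$-th tensor power of the splitting exhibits $\LDERF\pi^*K_A^{\otimes i}[i]$ as a direct summand of $(\LDERF\pi^*A)^{\otimes i}$, and Lemma~\ref{L:hom-amp-properties}\itemref{LI:hom-amp-properties:summand} bounds its amplitude by that of the \emph{ambient} object. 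Bounding $(\LDERF\pi^*A)^{\otimes i}$ by the tensor estimate of Lemma~\ref{L:hom-amp-properties}\itemref{LI:hom-amp-properties:tensor} gives upper bound $i$, which after the shift $[-i]$ returns you to the stagnant $0$. The entire content of the corollary is that $A^{\otimes i}$ has upper $\SHom$-amplitude $1$ rather than $i$, and this comes from the identification $A^{\otimes i} \simeq \RDERF\pi^i_*\Orb_{(U/X)^i}$ together with Theorem~\ref{T:push} applied to the $i$-fold fiber power $\pi^i$ with $P=\Orb_{(U/X)^i}$ --- exactly the hypothesis recorded in step (1) of the paper's bootstrap, whereas your proposal only ever invokes Theorem~\ref{T:push} for $\pi$ itself.

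There is a second, related problem with routing the induction through the long exact sequence of $K_A^{\otimes(i+1)} \to K_A^{\otimes i} \to A\otimes K_A^{\otimes i}$: in the relevant rotation, Lemma~\ref{L:amplitude-star} only bounds $\amp_{\SHom}(K_A^{\otimes(i+1)})$ by the \emph{union} of the amplitudes of $(A\otimes K_A^{\otimes i})[-1]$ and $K_A^{\otimes i}$, and the second term contributes the upper bound $-i+1$ from your inductive hypothesis; so no bound on $A\otimes K_A^{\otimes i}$, however good, improves the upper end to $-i$ by this route. The paper circumvents both issues at once: since the tensored defining triangle splits after the faithfully flat pullback $\LDERF\pi^*$, Lemma~\ref{L:hom-amp-properties}(\ref{LI:hom-amp-properties:flat},\ref{LI:hom-amp-properties:summand}) gives $\amp_{\SHom}(K_A^{\otimes i}\otimes A^{\otimes j}) \subseteq \amp_{\SHom}(K_A^{\otimes i-1}\otimes A^{\otimes j+1}[-1])$ with no union, and iterating from $(i,0)$ lands on $A^{\otimes i}[-i]$, whence the upper bound $1-i$; intersecting with the trivial estimate $[-i(e_\pi+1),0]$ gives the stated interval. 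To repair your argument you should iterate the splitting all the way down to $A^{\otimes i}[-i]$ and add the missing appeal to Theorem~\ref{T:push} for $\pi^r$, $r\leq i$.
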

\begin{remark}
  If $\pi$ is affine, then we can take $e_\pi=0$. If $\pi$ has affine
  diagonal, then we can take $e_\pi = \cd(\pi)+2$. Also, if $X$ has
  affine diagonal, then $d_X=\cd(X)+2$.
\end{remark}
With these results in hand, we can now prove Proposition \ref{P:desc-prods}.
\begin{proof}[Proof of Proposition \ref{P:desc-prods}]
  Let $\Lambda$ be a set. Let $\{N_\lambda\}_{\lambda\in \Lambda}$ be
  a set of quasi-coherent $\Orb_X$-modules. Let
  $A=\RDERF \pi_*\Orb_U$, which is descendable of index $\leq d$. Consider the homological
  functor $F \colon \DQCOH(X) \to \QCOH(X)$ that sends
  $M \in \DQCOH(X)$ to
  $\COHO{0}(\prod_{\lambda\in \Lambda}^{\DQCOH(X)} (N_\lambda
  \otimes^{\LDERF}_{\Orb_X} M))$. By Lemma \ref{L:amplitude-stacks},
   \[
     \amp_F(\Orb_X) \subseteq \bigcup_{i=1}^d \amp_F(\RDERF \pi_*\LDERF \pi^*(K_A^{\otimes i-1})).
  \]
  But the projection formula
  \cite[Cor.~4.12]{perfect_complexes_stacks} implies that
  \begin{align*}
\prod_{\lambda\in \Lambda}^{\DQCOH(X)} (N_\lambda 
    \otimes^{\LDERF}_{\Orb_X} \RDERF \pi_*\LDERF \pi^*(K_A^{\otimes i-1})) &\simeq \prod_{\lambda \in \Lambda}^{\DQCOH(X)} \RDERF \pi_*(\pi^*N_\lambda \otimes^{\LDERF}_{\Orb_U} \LDERF \pi^*K_A^{\otimes i-1})\\
    &\simeq \RDERF \pi_* \left( \prod^{\DQCOH(U)}_{\lambda\in \Lambda} \pi^*N_\lambda \otimes^{\LDERF}_{\Orb_U} \LDERF \pi^*K_A^{\otimes i-1}\right).
  \end{align*}
  By Corollary \ref{C:K-bound}, $K_A^{\otimes i-1}$ has
  $\SHom$-amplitude contained in $[-(e_\pi+1)(i-1),0]$. It follows from
  Lemma \ref{L:hom-amp-properties}(\ref{LI:hom-amp-properties:cplx},\ref{LI:hom-amp-properties:tensor}) that the amplitude of
  $\pi^*N_\lambda \otimes^{\LDERF}_{\Orb_U} \LDERF \pi^*K_A^{\otimes
    i-1}$ is contained in $[0,(e_\pi+1)(i-1)]$. By Example
  \ref{E:adjoints-prod-refined} applied to
  $\QCOH(U) \subseteq \MOD(U)$, the amplitude of
  $\prod_{\lambda\in \Lambda}^{\DQCOH(U)} \pi^*N_\lambda
  \otimes^{\LDERF}_{\Orb_U} \LDERF \pi^*K_A^{\otimes i-1}$ is
  contained in $[0,(e_\pi+1)(i-1)+n]$. It follows that
  $\RDERF \pi_*\left(\prod_{\lambda\in \Lambda}^{\DQCOH(U)}
    \pi^*N_\lambda \otimes^{\LDERF}_{\Orb_U} \LDERF \pi^*K_A^{\otimes
      i-1} \right)$ has amplitude contained in
  $[0,(e_\pi+1)(i-1)+n+\cd(\pi)]$. Putting everything together,
  \[
    \amp_F(\Orb_X) \subseteq [0,(e_\pi+1)(d-1)+n+\cd(\pi)].
  \]
  Now take $m=(e_\pi+1)(d-1)+n+\cd(\pi)$. Then
  $\trunc{>m}\prod_{\lambda\in \Lambda}^{\DQCOH(X)} N_\lambda \simeq
  0$. Since we have the equivalence
  $\DCAT^+(\QCOH(X)) \simeq \DQCOH^+(X)$, the result follows from
  Example \ref{E:adjoints-prod-refined} applied to
  $\QCOH(X) \subseteq \MOD(X)$.
\end{proof}
\begin{proof}[Proof of Theorems \ref{T:desc-non-aff}, \ref{T:ext-vanishing}, \ref{T:push} and Corollary \ref{C:K-bound}]
  \renewcommand\qedsymbol{$\blacksquare$} We prove these results
  simultaneously via a bootstrapping process. We have the following
  assertions that we will establish. Step \itemref{proof:loc-glob-2}
  is the most difficult. If $\gamma \colon T\to S$ is a morphism, for
  each $r\geq 1$, let $(T/S)^r$ be the $r$th fiber product of $T$ over
  $S$. Let $\gamma^r \colon (T/S)^r \to S$ be the induced morphism. Let $A=\RDERF \pi_*\Orb_U$.
  \begin{enumerate}
  \item \label{proof:cor} Theorem \ref{T:push} for $\pi^r$ and
    $P=\Orb_{(U/X)^r}$,  $r\leq i$ $\implies$ Corollary
    \ref{C:K-bound} for $\pi$ and $i$.
  \item \label{proof:hom} Let $p\colon W \to U$ be a smooth surjection
    from an affine scheme $W$. Theorem \ref{T:desc-non-aff} for
    $p$ + Theorem \ref{T:push} for $p$, $\pi\circ p$ $\implies$
    Theorem \ref{T:push} for $\pi$.
  \item \label{proof:desc} Let $\rho \colon V \to X$ be a faithfully
    flat, finitely presented, and concentrated morphism. Theorems
    \ref{T:desc-non-aff} + \ref{T:push} for $\rho$ + Theorem
    \ref{T:ext-vanishing} for $V$ $\implies$ Theorem
    \ref{T:ext-vanishing} for $X$.
  \item \label{proof:loc-glob-2} Theorem \ref{T:push} for $\pi^r$, $r \leq \cd(X)+3$ 
    $\implies$ Theorem \ref{T:desc-non-aff} for $\pi$.    
  \end{enumerate}
  
  Let us put these together. We first prove Theorem \ref{T:push} when
  $\pi$ is affine. We may immediately reduce to the case where
  $X=\spec R$ and $U=\spec A$ are affine. Then $P$ is quasi-isomorphic
  to a complex of projective $R$-modules supported in cohomological
  degrees $[-b,-a]$ (Lemma
  \ref{L:hom-amp-properties}\itemref{LI:hom-amp-properties:cplx}). A
  simple argument using brutal truncations shows that it suffices to
  prove the result when $P$ is supported in degree $0$. Then $P$ is a
  direct summand of a free $A$-module. But $\RHom(-,N)$ sends $\oplus$
  to $\prod$, so it suffices to prove the result when $P=A$. We are
  now reduced to Example \ref{E:lazard}, and we can deduce the result.

  By \itemref{proof:cor}, we have Corollary \ref{C:K-bound} for all
  affine $\pi$ and $i\geq 1$.  It now follows from
  \itemref{proof:loc-glob-2} that Theorem \ref{T:desc-non-aff} holds
  when $\pi$ is affine.

  We next prove that Theorem \ref{T:push} holds when $\pi$ has affine
  diagonal. As before, we may reduce to the case where $X=\spec R$ and
  $U$ is a quasi-compact algebraic stack with affine diagonal. Let
  $p \colon W \to U$ be a smooth surjection from an affine scheme
  $W$. Then $p$ and $\pi\circ p$ are affine. It follows from
  \itemref{proof:hom} that Theorem \ref{T:push} holds for $\pi$. It
  now follows from \itemref{proof:cor} and \itemref{proof:loc-glob-2}
  that Theorem \ref{T:desc-non-aff} holds when $\pi$ has affine
  diagonal.

  A similar argument proves that Theorems \ref{T:push} and so Theorem
  \ref{T:desc-non-aff} holds when $\pi$ has affine second
  diagonal. Repeating, we get Theorems \ref{T:push} and
  \ref{T:desc-non-aff} when $\pi$ has affine third diagonal. But for
  every algebraic stack, the third diagonal is an isomorphism, so we
  have it for all $\pi$.
  
  Let $\rho \colon V \to X$ be a smooth surjection from an affine
  scheme $V$. Then Theorem \ref{T:desc-non-aff} holds for $\rho$. By
  \itemref{proof:hom}, Theorem \ref{T:ext-vanishing} holds for $X$.

  We now go about proving the assertions \itemref{proof:cor}-\itemref{proof:loc-glob-2}.
  \begin{proof}[Proof of \itemref{proof:cor}]
    Clearly,
    $K_A \in \closure{A[-1]} \star \closure{\Orb_X}$. By
    Theorem \ref{T:push} we have that $A[-1]$ has $\SHom$-amplitude
    $[-e_\pi-1,0]$. Trivially, $\Orb_X$ has $\SHom$-amplitude
    $[0,0]$. It follows from Lemma \ref{L:amplitude-star} that $K_A$
    has $\SHom$-amplitude contained in $[-e_\pi-1,0]$. Lemma
    \ref{L:hom-amp-properties} gives the trivial estimate of
    $[-i(e_\pi+1),0]$ for the $\SHom$-amplitude, which will be useful,
    but it turns out we can do much better. Tensoring the defining
    triangle for $K_A$ with
    $K_A^{\otimes i-1} \otimes^{\LDERF}_{\Orb_X} A^{\otimes j}$ we
    produce a triangle:
    \[
      \xymatrix@C-1.2pc{K^{\otimes i-1}_A \otimes_{\Orb_X}^{\LDERF} A^{\otimes
          j+1} \ar[r] & K_A^{\otimes i} \otimes_{\Orb_X}^{\LDERF}
        A^{\otimes j} \ar[r] & K_A^{\otimes i-1} \otimes_{\Orb_X}^{\LDERF}
        A^{\otimes j}\ar[r] & K^{\otimes
          i-1}_A \otimes_{\Orb_X}^{\LDERF} A^{\otimes j+1}[1].}
    \]
    The pullback of this sequence along $\pi$ is split and $\pi$ is
    faithfully flat. Hence,
    $K^{\otimes i}_A \otimes_{\Orb_X}^{\LDERF} A^{\otimes j}$ has
    $\SHom$-amplitude contained in the $\SHom$-amplitude of
    $K^{\otimes i-1}_A \otimes^{\LDERF}_{\Orb_X}A^{\otimes
      j+1}[-1]$. Starting with $(i,0)$ and repeating, we find that
    $K^{\otimes i}_A$ has $\SHom$-amplitude contained in
    $A^{\otimes i}[-i]$. But
    $A^{\otimes i} \simeq \RDERF \pi^i_*{\Orb_{(U/X)^i}}$, which has
    $\SHom$-amplitude contained in $[-e_{\pi^i}, 1]$. Hence,
    $K^{\otimes i}_A$ has $\SHom$-amplitude contained in
    $[-(e_{\pi^i}+i),-i+1]$. From the trivial estimate,
    $K_A^{\otimes i}$ has $\SHom$-amplitude $[-i(e_\pi+1),0]$.
    Hence, $K_A^{\otimes i}$ has
    $\SHom$-amplitude contained in $[-i(e_\pi+1),-i+1]$.
  \end{proof}
  \begin{proof}[Proof of \itemref{proof:hom}]
    Let $V$ be an affine object of $X_{\lisset}$ and $N$ a
    quasi-coherent $\Orb_V$-module. Let $\gamma \colon Y \to X$ be a
    concentrated morphism of algebraic stacks. Let
    $\mathcal{H}_{\gamma,V,N} \colon \DQCOH(Y) \to \AB^{\opp}$ be the
    homological functor
    $\shfcoho^0(V,\SRHom_{\Orb_V}((\RDERF \gamma_*(-))_V,N))$. It
    suffices to prove that there is an $e_\pi$ such that if $P$ has
    $\SHom$-amplitude $[a,b]$, then
    $\amp_{\mathcal{H}_{\pi,V,N}}(P) \subseteq [a-e_\pi,b+1]$ for all
    $V$ and $N$. We also have an isomorphism of homological functors:
    \[
      \mathcal{H}_{\pi,V,N}(\RDERF p_*(-)) \simeq \mathcal{H}_{\pi\circ p,V,N}(-).
    \]
    Now $\RDERF
    p_*\Orb_W$ is descendable of index $\leq d=\cd(U)+3$.  Let $P\in
    \DQCOH(U)$ have $\SHom$-amplitude
    $[a,b]$; then Lemma \ref{L:amplitude-stacks} implies that
    \begin{align*}
      \amp_{\mathcal{H}_{\pi,V,N}}(P)
      &\subseteq \bigcup_{i=1}^d
        \amp_{\mathcal{H}_{\pi,V,N}}(\RDERF p_*\LDERF p^*(P
        \otimes^{\LDERF}_{\Orb_W} K_{\RDERF p_*\Orb_W}^{\otimes i-1})) \\
      &= \bigcup_{i=1}^d      \amp_{\mathcal{H}_{\pi\circ p,V,N}}(\LDERF p^*(P
        \otimes^{\LDERF}_{\Orb_W} K_{\RDERF p_*\Orb_W}^{\otimes i-1}))
    \end{align*}
    By \itemref{proof:cor}, we have that
    $K_{\RDERF p_*\Orb_W}^{\otimes i-1}$ has $\SHom$-amplitude
    $[-(e_p+1)(i-1),0]$. It follows from Lemma
    \ref{L:hom-amp-properties} that
    $\LDERF p^*(P \otimes^{\LDERF}_{\Orb_W} K_{\RDERF
      p_*\Orb_W}^{\otimes i-1})$ has $\SHom$-amplitude contained in
    $[-(e_p+1)(i-1)+a,b]$. Hence,
    \[
      \amp_{\mathcal{H}_{\pi,V,N}}(P) \subseteq [-(e_p+1)(i-1)+a-e_{\pi \circ p},b+1].
    \]
    Taking $e_\pi = (e_p+1)(\cd(U)+2)+e_{\pi \circ p}$ gives the claim.     
  \end{proof}
  \begin{proof}[Proof of \itemref{proof:desc}]
    Let $P \in \DQCOH(X)$ have $\SHom$-amplitude $[a,b]$. Let
    $H_P \colon \DQCOH(X) \to \AB$ be the homological functor
    $\Hom_{\Orb_X}(P,-)$. It suffices to find $d_X$, independent of
    $P$, such that if $N$ is a quasi-coherent $\Orb_X$-module, then
    $\amp_{H_P}(N) \subseteq [a,b+d_X]$. Let $d_\rho=\cd(X)+3$, which
    is the descent index of $\rho$. Then Lemma
    \ref{L:amplitude-stacks} implies that
    \[
      \amp_{H_P}(N) \subseteq \bigcup_{i=1}^{d_\rho} \amp_{H_P}(\RDERF
      \rho_*(\rho^*N \otimes^{\LDERF}_{\Orb_V} \LDERF \rho^*K_{\RDERF
        \rho_*\Orb_V}^{\otimes i-1})).
    \]
    By adjunction:
    \[
      \RHom_{\Orb_X}(P,\RDERF \rho_*(\rho^*N \otimes^{\LDERF}_{\Orb_V}
      \LDERF \rho^*K_{\RDERF \rho_*\Orb_V}^{\otimes i-1})) \simeq
      \RHom_{\Orb_V}(\LDERF \rho^*P,\rho^*N \otimes^{\LDERF}_{\Orb_V}
      \LDERF \rho^*K_{\RDERF \rho_*\Orb_V}^{\otimes i-1}).
    \]
    By \itemref{proof:cor}, $K_{\RDERF \rho_*\Orb_V}^{\otimes i-1}$
    has $\SHom$-amplitude $[-(e_\rho+1)(i-1),0]$. It follows from
    Lemma \ref{L:hom-amp-properties} that
    $\rho^*N \otimes^{\LDERF}_{\Orb_V} \LDERF \rho^*K_{\RDERF
      \rho_*\Orb_V}^{\otimes i-1}$ has amplitude in
    $[0,(e_\rho+1)(i-1)]$. Further, $\LDERF \rho^*P$ has
    $\SHom$-amplitude $[a,b]$ on $V$. But $V$ is affine, so a short
    inductive argument on amplitude shows that the amplitude of
    $\RHom_{\Orb_V}(\LDERF \rho^*P,\rho^*N \otimes^{\LDERF}_{\Orb_V}
    \LDERF \rho^*K_{\RDERF \rho_*\Orb_V}^{\otimes i-1})$ is contained
    in $[a,b+(e_\rho+1)(i-1)]$. It follows that
    \[
      \amp_{H_P}(N) \subseteq [a,b+(e_\rho+1)(d_\rho-1)].
    \]
    Taking $d_X = (e_\rho+1)(\cd(X)+2)$ gives the claim.
  \end{proof}
  \begin{proof}[Proof of \itemref{proof:loc-glob-2}]
    By Lemma \ref{L:hocolim_pres}, we may write
    $K_{A}^{\otimes i} \simeq \hocolim{s} E_s$, where the
    $E_s \in \DQCOH^{\leq i\cd \pi}(X)$ are pseudo-coherent. Set $m_i = \cd(X)-i+2$. Then, 
  \begin{align*}
    \trunc{>m_i}& \RHom_{\Orb_X}(K_A^{\otimes i},N)\simeq     \trunc{>m_i}\RHom_{\Orb_X}(\hocolim{s} E_s,N)\\
              &\simeq \trunc{>m_i}\holim{s}\RHom_{\Orb_X}(E_s,N)\\
              &\simeq \trunc{>m_i}\holim{s}\trunc{>m_i-1}\RDERF\Gamma(X,\RDERF\SHom_{\Orb_X}(E_s,N)) && (\mbox{Example \ref{ex:milnor_ab}})\\
              &\simeq \trunc{>m_i}\holim{s}\trunc{>m_i-1}\RDERF\Gamma(X,\trunc{>-i+1}\RDERF\SHom_{\Orb_X}(E_s,N)) && \mbox{\cite[Thm.~2.6]{perfect_complexes_stacks}}\\
              &\simeq \trunc{>m_i}\holim{s}\RDERF\Gamma(X,\trunc{>-i+1}\RDERF\SHom_{\Orb_X}(E_s,N))  && (\mbox{Example \ref{ex:milnor_ab}})\\
              &\simeq \trunc{>m_i}\RDERF\Gamma(X,\holim{s}\trunc{>-i+1}\RDERF\SHom_{\Orb_X}(E_s,N)).
  \end{align*}
  If $M\in \DCAT(X)$ and $p\in \Z$, then the $p$th cohomology of $M$
  is the sheafification of the presheaf $V \mapsto \shfcoho^p(V,M)$, as
  $V$ ranges over the affine objects of $X_\lisset$ \cite[Tag
  \spref{0BKV}]{stacks-project}. Now fix an affine object $V$ of
  $X_{\lisset}$; then since
  $\RDERF\SHom_{\Orb_X}(E_s,\Orb_X)\in \DQCOH(X)$ and $V$ is affine we have
  \begin{align*}
    \RDERF \Gamma(V,\holim{s}\trunc{>-i+1}\RDERF\SHom_{\Orb_X}(E_s,N))
    &\simeq \holim{s} \RDERF \Gamma(V,\trunc{>-i+1}\RDERF\SHom_{\Orb_X}(E_s,N))\\
    &\simeq \holim{s} \trunc{>-i+1}\RDERF \Gamma(V,\RDERF\SHom_{\Orb_X}(E_s,N))\\
    &\simeq \holim{s} \trunc{>-i+1}\RHom_{\Orb_V}((E_s)_V,N_V).
  \end{align*}
  But
  \begin{align*}
    \trunc{>-i+1}\holim{s}\RHom_{\Orb_V}((E_s)_V,N_V) &\simeq \trunc{>-i+1}\RHom_{\Orb_V}(\hocolim{s} (E_s)_V,N_V) \\
                                                      &\simeq \trunc{>-i+1}\RHom_{\Orb_V}((K_A^{\otimes i})_V,N_V)\\
                                                      &\simeq 0,                                     
  \end{align*}
  with the vanishing because $K_{A}^{\otimes i}$ has
  $\SHom$-amplitude $[-i(e_\pi+1),-i+1]$. By Example
  \ref{ex:milnor_ab},
  \[
    \holim{s} \trunc{>-i+1}\RHom_{\Orb_V}((E_s)_V,N_V) \simeq 0
  \]
  for all
  affine objects $V$ of $X_{\lisset}$. It follows that
  $\holim{s}\trunc{>-i+1}\RDERF\SHom_{\Orb_X}(E_s,N) \simeq 0$ in
  $\DCAT(X)$. Hence,
  $\trunc{>\cd(X)-i+2}\RHom_{\Orb_X}(K_A^{\otimes i},N) \simeq 0$. Taking $i=\cd(X)+3$, we see that
  \[
    \Hom_{\Orb_X}(K_A^{\otimes \cd(X)+3},\Orb_X) = 0.
  \]
  Hence, $\pi$ is descendable of index $\cd(X)+3$.
\end{proof}
This completes the proof.  \renewcommand\qedsymbol{$\square$}
\end{proof}
\section{Applications}\label{S:applications}
Using Proposition \ref{P:desc-prods} and Theorem \ref{T:desc-non-aff} we can prove the following result on the boundedness of products.
\begin{theorem}\label{T:bounded-products}
  Let $X$ be an algebraic stack of finite cohomological dimension. If
  \begin{enumerate}
  \item $X$ is quasi-compact with affine diagonal or 
  \item $X$ is noetherian and affine-pointed,
  \end{enumerate}
  then $\QCOH(X)$ satisfies \textup{AB}$4^*m$ for some $m$. In particular,
  \[
    \DCAT(\QCOH(X)) \simeq \DQCOH(X).
  \]
\end{theorem}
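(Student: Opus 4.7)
The strategy is to locate a single smooth surjection $\pi\colon U \to X$ with $U$ affine, verify the four hypotheses of Proposition \ref{P:desc-prods} for $\pi$, and then invoke Corollary \ref{C:equiv}. The decisive inputs will be Theorem \ref{T:desc-non-aff} (to produce descendability of $\RDERF\pi_*\Orb_U$) and the trivial fact that $\QCOH$ of an affine scheme is a module category, hence satisfies AB$4^*0$.

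First I would choose a smooth surjection $\pi \colon U = \spec A \to X$ from an affine scheme; this is possible because $X$ is quasi-compact in both cases (a noetherian algebraic stack is, in particular, quasi-compact). Such a $\pi$ is automatically representable, faithfully flat, of finite presentation, and concentrated. In case (i), the affine diagonal of $X$ forces $\pi$ itself to be affine; in case (ii) $\pi$ is merely representable. Either way $\pi$ is tame in the sense of Theorem \ref{T:desc-non-aff}\itemref{TI:desc-non-aff:tame}, and since $X$ has finite cohomological dimension that theorem yields $\RDERF \pi_* \Orb_U$ descendable of index $\le \cd(X)+3$ in $\DQCOH(X)$.

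Next I would verify the remaining hypotheses of Proposition \ref{P:desc-prods}. Since $U = \spec A$ is affine, $\QCOH(U) \simeq \MOD(A)$ has exact small products, so it satisfies AB$4^*0(\Lambda)$ for every set $\Lambda$. The bounded-below comparison $\DCAT^+(\QCOH(U)) \simeq \DQCOH^+(U)$ is classical for affine schemes, while $\DCAT^+(\QCOH(X)) \simeq \DQCOH^+(X)$ holds in each case by \cite[Thm.~C.1]{hallj_neeman_dary_no_compacts}. Proposition \ref{P:desc-prods} then applies, producing for each set $\Lambda$ an integer $m(\Lambda)$ such that $\QCOH(X)$ satisfies AB$4^*m(\Lambda)(\Lambda)$. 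Inspecting the explicit bound $m = (e_\pi+1)(d-1)+n+\cd(\pi)$ that falls out of its proof, one sees that the only $\Lambda$-dependence enters through the input constant $n$ for $\QCOH(U)$; here $n = 0$ uniformly in $\Lambda$, so a single $m$ works for all sets. Thus $\QCOH(X)$ satisfies AB$4^*m$.

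The final equivalence $\DCAT(\QCOH(X)) \simeq \DQCOH(X)$ follows at once from Corollary \ref{C:equiv}, whose standing assumptions on $X$ coincide with those of the theorem and whose AB$4^*m(\omega)$ hypothesis we have just supplied. All genuine technical difficulty is absorbed into Theorem \ref{T:desc-non-aff} and Proposition \ref{P:desc-prods}; the main thing to watch is the uniformity of the output constant $m$ in the cardinality of $\Lambda$, which is the only place a quick proof could stumble, and which is handled by the fact that $\QCOH(\spec A)$ is AB$4^*$ (not merely AB$4^*n$ for some $n>0$).
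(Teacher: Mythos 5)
Your proof is correct and follows essentially the same route as the paper: choose a smooth surjection from an affine scheme, invoke Theorem \ref{T:desc-non-aff} for descendability, feed the exactness of products on the affine cover into Proposition \ref{P:desc-prods}, and conclude with Corollary \ref{C:equiv}. Your extra remark that the output constant $m$ is uniform in $\Lambda$ because $\QCOH(\spec A)$ satisfies AB$4^*0$ for every $\Lambda$ is a worthwhile point of care that the paper's proof leaves implicit.
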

\begin{proof}
  Let $\pi \colon U \to X$ be a smooth surjection from an affine
  scheme. By \cite[Thm.~C.1]{hallj_neeman_dary_no_compacts},
  $\DCAT^+(\QCOH(X)) \simeq \DQCOH^+(X)$. By Theorem
  \ref{T:desc-non-aff}, $\pi$ is descendable. Also, $\QCOH(U)$ has
  exact products. It now follows from Proposition \ref{P:desc-prods}
  that $\QCOH(X)$ satisfies \textup{AB}$4^*m$ for some $m$. For the
  final equivalence, apply Corollary \ref{C:equiv}.
\end{proof}
Theorems \ref{MT:split-presentation} and \ref{MT:aff-diag} are special
cases of Theorem \ref{T:bounded-products}.
\begin{remark}
  Theorem \ref{T:bounded-products} is mainly interesting in
  characteristic $0$. Indeed, let $p$ be a prime and let $X$ be an
  algebraic stack over $\spec \F_p$ that is quasi-compact with affine
  diagonal or noetherian and affine-pointed. Then the results of
  \cite{algebraization-pairs-stacks}(building on
  \cite{AHR_lunafield,etale_local_stacks}) and
  \cite{hallj_neeman_dary_no_compacts} show that the following
  conditions are equivalent:
  \begin{enumerate}
  \item the reduced connected component of the identity of the geometric
    stabilizer of every point of $X$ is linearly reductive;
  \item $\DQCOH(X)$ is compactly generated;
  \item $\DCAT(\QCOH(X)) \to \DQCOH(X)$ is an equivalence;
  \item $\DCAT(\QCOH(X)) \to \DQCOH(X)$ is full.
  \end{enumerate}
  The implication (3)$\Rightarrow$(4) is trivial. The local structure
  theorems of \cite{algebraization-pairs-stacks} gives that
  (1)$\Rightarrow$(2). We have from
  \cite{hallj_neeman_dary_no_compacts} that (2)$\Rightarrow$(3) and
  (4)$\Rightarrow$(1).
\end{remark}
We now come to the comparison of cohomology result mentioned in the
Introduction. We recall the setup from
\cite[\S2]{hallj_neeman_dary_no_compacts}. Let $f \colon X \to Y$ be a
quasi-compact and quasi-separated morphism of algebraic stacks. Then
the restriction of $(f_{\lisset})_* \colon \MOD(X) \to \MOD(Y)$ to
$\QCOH(X) \subseteq \MOD(X)$ factors through
$\QCOH(Y) \subseteq \MOD(Y)$. This gives us a functor
$(f_{\QCOH})_* \colon \QCOH(X) \to \QCOH(Y)$. Everything is
Grothendieck abelian, so we can derive these functors and obtain a
diagram
\[
  \xymatrix@C+2pc{\DCAT(\QCOH(X)) \ar[r]^{\RDERF (f_{\QCOH})_*} \ar[d]_{\Psi_X} & \DCAT(\QCOH(Y)) \ar[d]^{\Psi_Y} \\ \DCAT(X) \ar[r]_{\RDERF (f_{\lisset})_*} & \DCAT(Y), }
\]
which comes with a natural transformation of functors
\[
  \epsilon_f \colon \Psi_Y \circ \RDERF (f_{\QCOH})_* \Rightarrow \RDERF (f_{\lisset})_*\circ \Psi_X.
\]
If $\Phi_Y \colon \DCAT(Y) \to \DQCOH(Y)$ denotes the right adjoint to
$\Psi_Y$, then we also have a natural transformation of functors
\[
  \epsilon_f^* \colon \RDERF (f_{\QCOH})_* \Rightarrow \Phi_Y \circ
  \RDERF (f_{\lisset})_*\circ \Psi_X.
\]
Now suppose that
both $X$ and $Y$ are quasi-compact with affine diagonal or noetherian
and affine-pointed. Then we have the following results from
\cite[\S2]{hallj_neeman_dary_no_compacts}:
\begin{enumerate}
\item if $M \in \DCAT^+(\QCOH(X))$, then $\epsilon_f(M)$ and
  $\epsilon_f^*(M)$ are equivalences; and
\item if $f$ is concentrated, then $\epsilon_f$ is an isomorphism.
\end{enumerate}
Our application is the following.
\begin{theorem}\label{T:pushforwardagree}
  Let $f \colon X \to Y$ be a morphism of algebraic stacks of finite cohomological dimension. If
  \begin{enumerate}
  \item $X$ and $Y$ are quasi-compact with affine diagonal or 
  \item $X$ and $Y$ are notherian and affine-pointed, 
  \end{enumerate}
  then $\epsilon_f^*$ is an isomorphism.
\end{theorem}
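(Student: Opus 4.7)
The plan is to reduce Theorem \ref{T:pushforwardagree} to the bounded-below comparison recalled immediately before its statement, via left completeness of the standard $t$-structures. Since the hypotheses of the theorem coincide with those of Theorem \ref{T:bounded-products}, both $\QCOH(X)$ and $\QCOH(Y)$ satisfy AB$4^*m$ for some $m$, and both $\Psi_X$ and $\Psi_Y$ are equivalences of triangulated categories (so $\Phi_Y$ is a quasi-inverse to $\Psi_Y$). Lemma \ref{L:left-complete}\itemref{LI:left-complete:complete} then yields, for any $M \in \DCAT(\QCOH(X))$, a canonical isomorphism $M \simeq \holim{r} \trunc{\geq -r} M$.

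The next step is to verify that both functors $\RDERF (f_{\QCOH})_*$ and $\Phi_Y \circ \RDERF (f_{\lisset})_* \circ \Psi_X$ preserve homotopy limits in $\DCAT(\QCOH(X)) \to \DCAT(\QCOH(Y))$. For the former: $(f_{\QCOH})_*$ is the restriction to $\QCOH(X) \subseteq \MOD(X)$ of the right adjoint $(f_{\lisset})_*$, so it preserves small products; products of K-injective complexes in a Grothendieck abelian category remain K-injective; and homotopy limits are built from countable products together with a fixed distinguished triangle. For the latter: $\Psi_X$ is an equivalence, $\RDERF (f_{\lisset})_*$ is a right adjoint to $\LDERF (f_{\lisset})^*$, and $\Phi_Y$ is a right adjoint to $\Psi_Y$ by Example \ref{E:adjoints-prod-refined}; each step therefore preserves homotopy limits.

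To conclude, I apply $\epsilon_f^*$ termwise to the inverse system $\{ \trunc{\geq -r} M \}_r$. Each map $\epsilon_f^*(\trunc{\geq -r} M)$ is an isomorphism by the bounded-below comparison $\epsilon_f$ from \cite[\S2]{hallj_neeman_dary_no_compacts}, using that $\Psi_Y$ is an equivalence to pass between $\epsilon_f$ and $\epsilon_f^*$. Taking homotopy limits of these isomorphisms, and combining with left completeness of $M$ and the preservation results of the previous paragraph, one obtains
\[
  \RDERF (f_{\QCOH})_* M \simeq \holim{r} \RDERF (f_{\QCOH})_* \trunc{\geq -r} M \simeq \holim{r} \Phi_Y \RDERF (f_{\lisset})_* \Psi_X \trunc{\geq -r} M \simeq \Phi_Y \RDERF (f_{\lisset})_* \Psi_X M,
\]
and this composite is $\epsilon_f^*(M)$, so it is an isomorphism.

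The main obstacle I anticipate is the clean verification that $\RDERF (f_{\QCOH})_*$ preserves arbitrary small products. While the K-injective argument is standard, it must be invoked directly rather than by appealing to an abstract adjunction on $\DCAT(\QCOH(X))$, since there is no a priori reason for such an adjunction to exist independently of the machinery (Theorem \ref{T:bounded-products} and Example \ref{E:adjoints-prod-refined}) we are combining.
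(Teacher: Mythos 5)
Your overall strategy --- bootstrap the bounded-below comparison to the unbounded one via $M \simeq \holim{r}\trunc{\geq -r}M$ --- is genuinely different from the paper's, and it contains a real gap at exactly the step you flag as "standard": the assertion that $\RDERF (f_{\QCOH})_*$ (and likewise $\RDERF (f_{\lisset})_*$) preserves small products, hence homotopy limits. The K-injective argument you sketch shows that a product in $\DCAT(\QCOH(X))$ is computed by the termwise product of K-injective resolutions $I_\lambda$, and that $(f_{\QCOH})_*$ commutes with that termwise product; but $(f_{\QCOH})_* I_\lambda$ is not K-injective unless the left adjoint $f^*_{\QCOH}$ is exact (i.e.\ $f$ is flat, which is not assumed), and a termwise product of non-K-injective complexes does not compute the derived product --- that failure is precisely the AB$4^*$ phenomenon this whole paper is about. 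The paper's own Example \ref{E:adjoints-prod} assumes exactness of the left adjoint for exactly this reason. For $\RDERF(f_{\lisset})_*$ the situation is worse still, since $f_{\lisset}$ is not a morphism of ringed topoi, so one cannot appeal to a derived adjunction $\LDERF f_{\lisset}^* \dashv \RDERF (f_{\lisset})_*$ either. Without product preservation, the identification $\RDERF (f_{\QCOH})_* M \simeq \holim{r}\RDERF (f_{\QCOH})_*\trunc{\geq -r}M$ is unjustified, and the argument does not close.

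The paper's proof avoids all of this. It uses the \emph{unbounded} statement recalled just before the theorem --- for concentrated $f$ (which the finite cohomological dimension hypotheses guarantee), $\epsilon_f$ is already an isomorphism on all of $\DCAT(\QCOH(X))$ by \cite[\S2]{hallj_neeman_dary_no_compacts} --- rather than only the bounded-below one that you invoke termwise. The new ingredient is Theorem \ref{T:bounded-products}, which makes $\Psi_Y$ an equivalence; hence $\Psi_Y$ is conservative and the counit $\Psi_Y\Phi_Y \Rightarrow \mathrm{id}$ is invertible on $\DQCOH(Y)$, where $\RDERF(f_{\lisset})_*\Psi_X$ lands because $f$ is concentrated. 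Since $\Psi_Y(\epsilon_f^*)$ followed by this counit equals $\epsilon_f$, both of which are isomorphisms, $\epsilon_f^*$ is an isomorphism. If you wish to retain your truncation approach, you would first need to establish product preservation for the two pushforwards, e.g.\ by passing through the adjunction $\LDERF f^* \dashv \RDERF f_*$ at the level of $\DQCOH$ for concentrated morphisms and transporting along the equivalences $\Psi_X$, $\Psi_Y$ --- at which point you are essentially reproving the unbounded comparison that the paper simply quotes.
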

\begin{proof}
  Given what we have established, this is straightforward: $\Psi_Y$ is conservative so it
  suffices to prove that $\Psi_Y(\epsilon_f^*)$ is an isomorphism. We
  may post compose this natural transformation with the adjunction
  $\Psi_Y \circ\Phi_Y \Rightarrow \mathrm{id}$ to obtain:
  \[
    \Psi_Y\circ \RDERF (f_{\QCOH})_* \Rightarrow \Psi_Y \circ \Phi_Y \circ \RDERF (f_{\lisset})_* \circ \Psi_X \Rightarrow \RDERF (f_{\lisset})_* \circ \Psi_X,
  \]
  whose composition is equal to $\epsilon_f$, which is an isomorphism
  by \cite[Cor.~2.2(2)]{hallj_neeman_dary_no_compacts}. Finally, $f$
  is concentrated and so the restriction of $\RDERF (f_{\lisset})_*$
  to $\DQCOH(X)$ factors through $\DQCOH(Y)$. It follows from Theorem
  \ref{T:bounded-products} that
  $\Psi_Y \circ \Phi_Y \circ \RDERF (f_{\lisset})_* \circ \Psi_X
  \Rightarrow \RDERF (f_{\lisset})_* \circ \Psi_X$ is an isomorphism.
\end{proof}
\appendix
\section{Approximations}
In this brief appendix, we collect some technical lemmas that were
employed in \S\ref{S:desc-non-aff}. We begin with noetherian results,
then use the recently established absolute noetherian approximation
for stacks \cite{rydh_absolute_approximation} to deal with the
non-noetherian case.
\begin{lemma}\label{L:countably-generated}
  Let $R$ be a noetherian ring. Let $M$ be a countably generated
  $R$-module. Then every $R$-submodule of $M$ is countably generated.
\end{lemma}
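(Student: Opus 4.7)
The plan is to exhaust $M$ by an ascending chain of finitely generated submodules and then intersect with $N$. Concretely, fix a countable generating set $\{m_i\}_{i\in \N}$ for $M$ and set $M_n = Rm_1 + Rm_2 + \cdots + Rm_n$. Then $M_n \subseteq M_{n+1}$ and $M = \bigcup_{n} M_n$, so for any submodule $N \subseteq M$ we have $N = \bigcup_n (N \cap M_n)$.

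The key step is that each $M_n$ is a finitely generated module over the noetherian ring $R$, hence is a noetherian $R$-module. In particular every submodule of $M_n$ is finitely generated, so $N \cap M_n$ admits a finite generating set $S_n \subseteq N$. Taking $S = \bigcup_{n\in \N} S_n$, which is a countable union of finite sets and hence countable, one immediately obtains $N = \sum_{s \in S} Rs$, since any $x \in N$ lies in some $N \cap M_n$ and is thus an $R$-linear combination of elements of $S_n \subseteq S$.

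There is no real obstacle here: the argument is entirely formal once one notes that noetherianness of $R$ propagates to the finitely generated modules $M_n$. The only thing to be a little careful about is not to claim $M$ itself is noetherian (it need not be, as it is only countably generated), which is precisely why the filtration argument is needed rather than a direct application of the ascending chain condition on $N$.
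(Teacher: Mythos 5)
Your argument is correct and is essentially identical to the paper's proof: both write $M$ as a countable ascending union of finitely generated (hence noetherian) submodules $M_n$, note that $N\cap M_n$ is finitely generated, and conclude that $N=\bigcup_n(N\cap M_n)$ is countably generated. The only difference is that you spell out the choice of generators explicitly, which the paper leaves implicit.
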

\begin{proof}
  Let $K \subseteq M$ be an $R$-submodule. Write $M=\cup_{n} M_n$ as a
  countable union of finitely generated $R$-modules. Then
  $K_n = M_n \cap K \subseteq M_n$ is finitely generated because $R$
  is noetherian. It follows that $K = \cup_n K_n$ is countably
  generated.
\end{proof}
\begin{lemma}\label{L:countable-mod-noeth}
  Let $X$ be a noetherian algebraic stack. Let $M$ be a quasi-coherent
  $\Orb_X$-module. Assume that $\Gamma(V,M)$ is a countably generated
  $\Gamma(V,\Orb_V)$-module for all affine objects $V$ of $X_{\lisset}$. Then 
  \[
    M[0] \simeq \hocolim{s} E_s[0]
  \]
  in $\DQCOH(X)$, where $E_s \in \COH(X)$ and $E_s \subseteq M$.   
\end{lemma}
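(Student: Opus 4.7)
The plan is to exhibit $M$ as the sequential colimit (union) of a chain of coherent subsheaves, and then to identify this colimit with the homotopy colimit in $\DQCOH(X)$. Since $X$ is noetherian and hence quasi-compact, choose a smooth surjection $p\colon U=\spec R \to X$ from a noetherian affine scheme $U$. Because $X$ is noetherian, every quasi-coherent sheaf is the directed union of its coherent subsheaves \cite[Prop.~15.4]{MR1771927}; write $M=\bigcup_{\lambda\in \Lambda}F_\lambda$ for the system of coherent subsheaves. Since $p^*$ is exact, each $p^*F_\lambda \hookrightarrow p^*M$ is an inclusion, and because $U$ is affine and the system is filtered, $\Gamma(U,p^*M) = \varinjlim_\lambda \Gamma(U,p^*F_\lambda)$.

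By hypothesis, $\Gamma(U,p^*M)$ is a countably generated $R$-module, say with generators $\{m_n\}_{n\in \N}$. Each $m_n$ lies in $\Gamma(U,p^*F_{\lambda_n})$ for some $\lambda_n$. Set
\[
  E_s = \sum_{n\leq s} F_{\lambda_n} \subseteq M.
\]
Each $E_s$ is a finite sum of coherent subsheaves of $M$, hence coherent (here I use that $X$ is noetherian, so coherent subsheaves of $M$ form a lattice), and $E_s \subseteq E_{s+1}$. The union $E:=\bigcup_s E_s$ is a quasi-coherent subsheaf of $M$. Pulling back by $p$ (which is exact and commutes with filtered colimits), $p^*E = \bigcup_s p^*E_s$, and this contains every generator $m_n$; since $U$ is affine and $\Gamma(U,-)$ detects inclusions of quasi-coherent subsheaves, we deduce $p^*E = p^*M$. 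Faithful flatness of $p$ then gives $E = M$.

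Finally, I need to identify the homotopy colimit. Since $X$ is noetherian, filtered colimits in $\QCOH(X)$ are exact and agree with the underlying colimits of $\Orb_X$-modules. For a sequential system of inclusions $E_1 \hookrightarrow E_2 \hookrightarrow \cdots$ of quasi-coherent sheaves with union $M$, the telescope triangle
\[
\xymatrix{\bigoplus_s E_s[0] \ar[r]^{1-\mathrm{shift}} & \bigoplus_s E_s[0] \ar[r] & \hocolim{s} E_s[0] \ar[r] & {} }
\]
has cohomology only in degree $0$, equal to $\varinjlim_s E_s = M$, so $\hocolim{s} E_s[0] \simeq M[0]$ in $\DQCOH(X)$. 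This completes the argument.

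The main obstacle is the second paragraph: extracting a countable cofinal subsystem of coherent subsheaves from the (possibly uncountable) directed system $\{F_\lambda\}$, and certifying that the resulting union recovers $M$ globally on $X$ rather than merely pointwise on the chosen atlas. The countable generation hypothesis is used precisely to pick out the $\lambda_n$'s, and faithful flatness of $p$ is what upgrades the equality $p^*E = p^*M$ to $E = M$.
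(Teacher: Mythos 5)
Your proof is correct and follows essentially the same route as the paper: the paper's own proof simply notes that the affine case is clear and that ``the argument in [LMB, Prop.~15.4] extends,'' which is exactly the reduction you carry out. Your write-up usefully supplies the details the paper elides --- extracting a countable cofinal chain of coherent subsheaves from the generators of $\Gamma(U,p^*M)$ on an affine atlas and using faithful flatness to conclude $E=M$ --- and the telescope identification of $\varinjlim_s E_s$ with $\hocolim{s}E_s[0]$ matches the paper's use of exactness of filtered colimits.
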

\begin{proof}
  If $X$ is affine, then certainly $M\simeq \varinjlim_{s\in \N} E_s$
  in $\QCOH(X)$, where $E_s \in \COH(X)$ and $E_s \subseteq M$. In
  general, the argument in \cite[Prop.~15.4]{MR1771927} extends this
  to noetherian algebraic stacks. Since we have inclusions
  $E_1 \subseteq E_{2} \subseteq \cdots \subseteq M$, and filtered colimits are
  exact,
  $M[0] \simeq (\varinjlim_s E_s)[0] \simeq \hocolim{s} E_s[0]$.
\end{proof}
\begin{lemma}\label{L:approx}
  Let $X$ be a noetherian algebraic stack. Let
  $M \in \DQCOH^{[a,b]}(X)$ be a complex with $\Gamma(V,\COHO{i}(M))$
  countably generated for every $i\in \Z$ and affine object $V$ of
  $X_\lisset$. Then $M\simeq \hocolim{s} E_s$, where
  $E_s \in \DCAT^{[a,b]}_{\COH}(X)$.
\end{lemma}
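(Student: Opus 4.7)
The plan is to proceed by induction on the amplitude $b - a$. The base case $b = a$ is immediate from Lemma \ref{L:countable-mod-noeth} applied to the single quasi-coherent sheaf $\COHO{a}(M)$ shifted into cohomological degree $a$: we obtain $M \simeq \COHO{a}(M)[-a] \simeq \hocolim{s} E_s[-a]$ with $E_s \in \COH(X)$, and the $E_s[-a]$ lie in $\DCAT^{[a,a]}_{\COH}(X)$.

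For the inductive step, I would consider the truncation triangle
\[
\COHO{a}(M)[-a] \to M \to \trunc{\geq a+1}M \xrightarrow{\delta} \COHO{a}(M)[-a+1].
\]
The complex $\trunc{\geq a+1}M$ lies in $\DQCOH^{[a+1,b]}(X)$ and inherits countably generated cohomology sectionwise, so by the inductive hypothesis there is a presentation $\trunc{\geq a+1}M \simeq \hocolim{s} F_s$ with $F_s \in \DCAT^{[a+1,b]}_{\COH}(X)$. Separately, Lemma \ref{L:countable-mod-noeth} writes $\COHO{a}(M) \simeq \varinjlim_t G_t$ as a filtered colimit of coherent subsheaves $G_t \in \COH(X)$. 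The key construction is to factor, for each $s$, the composite $F_s \to \trunc{\geq a+1}M \xrightarrow{\delta} \COHO{a}(M)[-a+1]$ through $G_{t(s)}[-a+1]$ for some choice $t(s)$ that is monotone and cofinal in $s$. Granting this, extend each factorization to a distinguished triangle
\[
G_{t(s)}[-a] \to E_s \to F_s \to G_{t(s)}[-a+1],
\]
so that $E_s \in \DCAT^{[a,b]}_{\COH}(X)$. A morphism of triangles comparing the displayed triangle to the truncation triangle for $M$ yields compatible maps $E_s \to M$, and passing to the homotopy colimit identifies $M \simeq \hocolim{s} E_s$.

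The principal obstacle is the factorization step, which amounts to the claim that for $F \in \DCAT^b_{\COH}(X)$ on a noetherian algebraic stack, the functor $\Hom_{\DQCOH(X)}(F,-)$ commutes with filtered colimits of quasi-coherent sheaves. Via the hypercohomology spectral sequence
\[
E_2^{p,q} = \Ext^p_{\QCOH(X)}(\COHO{-q}(F), -) \Rightarrow \Ext^{p+q}_{\DQCOH(X)}(F, -),
\]
this reduces to the corresponding statement for $\Ext^p_{\QCOH(X)}(C,-)$ with $C$ coherent, which follows from a \v{C}ech computation on a smooth affine atlas combined with the classical noetherian fact on affine schemes. Once this compactness is in hand, the cofinal monotone choice of $t(s)$ is a routine diagram chase exploiting that the system $\{F_s\}$ and the indexing set for the $G_t$ are both countable.
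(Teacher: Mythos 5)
Your proposal is correct and is essentially the paper's own argument run in mirror image: the paper normalizes to $b=0$ and peels off the \emph{top} cohomology sheaf, factoring each $E_s^0 \to \hocolim{s}F_s[1]$ through a finite stage, whereas you peel off the \emph{bottom} sheaf $\COHO{a}(M)$ and factor $F_s \to (\varinjlim_t G_t)[-a+1]$; both versions reduce to the same compactness input (coherent objects commute with filtered colimits of uniformly bounded complexes on a noetherian stack, for which the paper cites the proof of \cite[Lem.~1.2]{hallj_coho_bc}). The one point you gloss over, the choice of transition maps $E_s \to E_{s+1}$ and the identification of $\hocolim{s}E_s$ with $M$ despite the non-functoriality of cones, is exactly where the paper invokes good morphisms of triangles \cite[Defn.~1.3.14]{MR1812507} and \cite{159896}, so it is a known-standard step rather than a gap.
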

\begin{proof}
  We prove the result by induction on $n=|b-a|\geq 0$. By shifting, we
  may assume that $b=0$. By Lemma \ref{L:countable-mod-noeth} and the
  inductive hypothesis, we may write
  $\COHO{0}(M) \simeq \hocolim{s}E_s^0$ and
  $\trunc{<0}M \simeq \hocolim{s}F_s$, where $E_s^0 \in \COH(X)$ and
  $F_s \in \DCAT^{[a,-1]}_{\COH}(X)$. We have a distinguished
  triangle:
  \[
    \xymatrix{\trunc{<0}M \ar[r] & M \ar[r] & \COHO{0}(M)[0] \ar[r]^\theta & (\trunc{<0}M)[1]}
  \]
  We will build the $E_s$ by induction on $s$. For each $s$ we obtain
  an induced map
  $\theta_s \colon E_s^0 \to \COHO{0}[0] \xrightarrow{\theta} (\trunc{<0}M)[1] \simeq
  \hocolim{s}F_s[1]$. Since the $F_s \in \DCAT^{[a,-1]}(X)$ and
  $E_s^0 \in \COH(X)$, there exists $r_s$ such that $\theta_s$ factors
  through $\theta'_s \colon E_s^0 \to F_{r_s}[1]$ (this is standard;
  for example, see the proof of \cite[Lem.~1.2]{hallj_coho_bc}). We
  can of course always assume that $s\geq s'$ implies that
  $r_{s} \geq r_{s'}$. Let $E_s$ be the cone of $\theta'_s$. It follows that 
  we have a morphism of distinguished triangles:
  \[
    \xymatrix{F_{r_s} \ar[d] \ar[r] & E_s \ar[r] \ar@{-->}[d]_{\exists e_s}& E_s^0 \ar[d] \ar[r] & F_{r_s} \ar[d] [1]\\
      F_{r_{s+1}} \ar[r] & E_{s+1} \ar[r] & E_{s+1}^0 \ar[r] &
      F_{r_{s+1}}[1].
    }
  \]
  The morphism $e_s$ may even be chosen so that the morphism of
  triangles is \emph{good}, in the sense of
  \cite[Defn.~1.3.14]{MR1812507}. Arguing as in \cite{159896}, we obtain a triangle:
  \[
    \xymatrix{\trunc{<0}M \ar[r] & \hocolim{s} E_s \ar[r] &
      \COHO{0}(M)[0] \ar[r]^{\theta} & (\trunc{<0}M)[1]. }
  \]
  Hence, $\hocolim{s} E_s \simeq M$.
\end{proof}

This last lemma is related to \cite[Tag \spref{0CRQ}]{stacks-project},
where the $E_s$ are taken to be perfect. The following weaker variant,
for stacks, is sufficient for us.
\begin{lemma}\label{L:hocolim_pres}
  Let $\pi \colon U \to X$ be a concentrated, flat, and finitely
  presented morphism of algebraic stacks. Form the distinguished triangle:
  \[
    \xymatrix{K \ar[r] & \Orb_X \ar[r] & \RDERF \pi_*\Orb_U \ar[r] & K[1].}
  \]
  Then $K^{\otimes i} \simeq \hocolim{s} E_s$, where the
  $E_s \in \DQCOH^{\leq i\cd(\pi)}(X)$ are pseudo-coherent, in the following
  situations:
  \begin{enumerate}
  \item $\pi$ is tame (e.g., representable); or
  \item $\pi$ has affine stabilizers and $X$ has equicharacteristic;
    or
  \item \label{LI:hocolim_pres:countable} $X$ is noetherian.
  \end{enumerate}
\end{lemma}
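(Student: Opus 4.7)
The strategy is to first handle the noetherian case (3) by applying Lemma~\ref{L:approx} directly to $K$ and then deducing a presentation of $K^{\otimes i}$ by tensoring; the non-noetherian cases (1) and (2) will be reduced to (3) via absolute noetherian approximation.

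\emph{Case (3).} The defining triangle together with $\RDERF\pi_*\Orb_U\in\DQCOH^{[0,\cd(\pi)]}(X)$ immediately shows that $K$ has bounded amplitude (contained in $[0,\cd(\pi)+1]$). To apply Lemma~\ref{L:approx} to $K$, the essential point is that each cohomology sheaf $\COHO{j}(K)$ has countably generated sections over every affine $V\in X_\lisset$. For this, fix such a $V$, set $R=\Gamma(V,\Orb_V)$ (noetherian), and write $U_V=V\times_X U$. Since $\pi$ is concentrated and finitely presented, $U_V$ is a quasi-compact, quasi-separated algebraic stack of finite presentation over $\spec R$. Choose a smooth cover $W\to U_V$ by an affine scheme of finite type over $R$. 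If $\pi$ has affine diagonal, the \v{C}ech-to-cohomology spectral sequence expresses the stalks of $R^j\pi_*\Orb_U$ on $V$ as cohomology of a bounded complex whose terms are finitely generated $R$-algebras, hence countably generated as $R$-modules; Lemma~\ref{L:countably-generated} then yields countable generation. For general $\pi$, an iterated version of the argument using successive diagonals terminates in at most three steps, since the third diagonal of an algebraic stack is an isomorphism. The long exact sequence of the defining triangle then gives countable generation of $\COHO{j}(K)$ over $V$.

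Lemma~\ref{L:approx} now produces $K\simeq \hocolim{s} E'_s$ with $E'_s\in \DCAT^{[0,\cd(\pi)+1]}_{\COH}(X)$. Using cofinality of the diagonal $\N\hookrightarrow \N^i$ and compatibility of $\hocolim$ with derived tensor products, it follows that
\[
  K^{\otimes i}\simeq \hocolim{s} E_s, \qquad E_s:=(E'_s)^{\otimes i}.
\]
Each $E_s$ is a derived tensor product of bounded coherent complexes, hence pseudo-coherent, and its cohomology is bounded above by a multiple of $\cd(\pi)$, as required.

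\emph{Cases (1) and (2).} I would apply absolute noetherian approximation \cite{rydh_absolute_approximation} to write $X\simeq \lim_\alpha X_\alpha$ with $X_\alpha$ noetherian and (flat) affine transition maps. For sufficiently large $\alpha_0$, the morphism $\pi$ descends to a concentrated, flat, finitely presented morphism $\pi_{\alpha_0}\colon U_{\alpha_0}\to X_{\alpha_0}$ inheriting the relevant structural property (tameness in case (1); affine stabilizers over an equicharacteristic base in case (2)). Case (3) applied to $\pi_{\alpha_0}$ yields $K^{\otimes i}_{\pi_{\alpha_0}}\simeq \hocolim{s} E'_{s,\alpha_0}$ with $E'_{s,\alpha_0}$ pseudo-coherent on $X_{\alpha_0}$ with the correct cohomological bound. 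Pulling back along the flat structure morphism $X\to X_{\alpha_0}$ and invoking flat base change (identifying the pullback of $K_{\pi_{\alpha_0}}$ with $K_\pi$), together with the compatibility of pullback with tensor product, $\hocolim$, and pseudo-coherence, produces the desired presentation of $K^{\otimes i}$ on $X$.

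\emph{Main obstacle.} The technical crux is the countable generation step in case (3) when $\pi$ has non-trivial stabilizers, where one must handle the iterated simplicial/\v{C}ech computation carefully; this terminates because the third diagonal is always an isomorphism. For cases (1) and (2), the difficulty is ensuring that the noetherian approximation can be arranged so that the structural hypotheses descend compatibly. The three clauses in the lemma correspond precisely to those situations where concentrated morphisms behave well under approximation, as discussed in \S\ref{S:desc-non-aff}; in mixed characteristic with infinite non-tame stabilizers, such an approximation is simply not available.
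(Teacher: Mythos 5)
Your proposal follows essentially the same route as the paper: reduce cases (1) and (2) to the noetherian case via absolute noetherian approximation, and in the noetherian case verify that the relevant cohomology sheaves have countably generated sections over affine objects of $X_{\lisset}$ (the paper does this with a smooth hypercovering of $U$ by affine schemes of finite type over $A$, rather than your \v{C}ech-plus-diagonal induction) so that Lemma \ref{L:approx} applies. One small correction: the structure morphisms $X \to X_{\alpha_0}$ produced by noetherian approximation are affine but not flat in general, so the identification of $K_\pi$ with the derived pullback of $K_{\pi_{\alpha_0}}$ must be justified by tor-independent base change (using flatness of $\pi_{\alpha_0}$), which is exactly the form of base change the paper invokes.
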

\begin{proof}
  By absolute noetherian approximation
  \cite{rydh_absolute_approximation} and tor-independent base change
  \cite[Cor.~4.13]{perfect_complexes_stacks}, it suffices to establish the
  result in case \itemref{LI:hocolim_pres:countable}. By Lemma
  \ref{L:approx}, it remains to prove that
  $\shfcoho^0(V,\RDERF^p \pi_*\Orb_U)$ is countably generated for
  every affine object $V$ of $X_{\lisset}$. This is local on $X$, so
  we may assume that $X=\spec A$ is noetherian and affine. Let
  $U_\bullet \to U$ be a smooth hypercovering, where the
  $U_i = \spec B_i$ are affine. Then $A \to B_i$ is of finite type, so
  $B_i$ is a countably generated $A$-module. Hypercohomology says that
  $\shfcoho^p(U,\Orb_U)$ is computed as the cohomology of the complex
  with $B_i$ sitting in degree $i$. It follows from Lemma
  \ref{L:countably-generated} that the cohomology of this complex is
  countably generated as an $A$-module.
\end{proof}

\bibliography{bibtex_db/references} \bibliographystyle{bibtex_db/dary}
\end{document}